\newcommand{\A}{\mathcal{A}}
\newcommand{\V}{\mathcal{V}}
\newcommand{\Phat}{\mathbb{P}}
\newcommand{\Z}{\mathcal{Z}}
\newcommand{\EE}{\mathbb{E}}
\newcommand{\PP}{\mathbb{P}}
\newcommand{\FF}{\mathcal{F}}
\newcommand{\NN}{\mathbb{N}}
\newcommand{\VV}{\mathcal{V}}
\newcommand{\LL}{\mathcal{L}}
\newcommand{\DD}{\mathcal{D}}
\newcommand{\CC}{\mathcal{C}}
\newcommand{\MM}{\mathcal{M}}
\newcommand{\inner}[2]{\langle #1, #2 \rangle}
\newcommand\delc[1]{}
\newcommand\comcd[1]{}
\newcommand\del[1]{}
\newcommand\deln[1]{}
\newcommand\delr[1]{}
\newcommand\comad[1]{}
\newcommand\Greendel[1]{}
\newcommand\old[1]{}
\numberwithin{equation}{section}
\def\R{{\mathbb R}\,}
\def\N{{\mathbb N}\,}
\def\E{{\mathbb E}\,}
\newcommand{\D}{{\mathscr D}}
\renewcommand{\H}{{\mathcal{H}}}
\def\old#1{}
\def\text#1{{\rm #1}}
\def\newold#1{}
\theoremstyle{plain}
\newtheorem{theorem}{Theorem}[section]
\theoremstyle{remark}
\theoremstyle{plain}
\newtheorem{corollary}[theorem]{Corollary}
\newtheorem{lemma}[theorem]{Lemma}
\newtheorem{proposition}[theorem]{Proposition}
\newtheorem{definition}[theorem]{Definition}
\numberwithin{equation}{section}
\DeclarePairedDelimiter{\ang}{\langle}{\rangle}
\DeclarePairedDelimiter{\norm}{\|}{\|}
\DeclarePairedDelimiter{\abs}{|}{|}
\begin{document}
\title[Martingale Solutions of Stochastic Constrained Modified Swift-Hohenberg Equation]{Martingale Solutions of Stochastic Constrained Modified Swift-Hohenberg Equation}

\author{Saeed Ahmed}
\address{Department of Mathematics\\
Sukkur IBA university\\
Sindh Pakistan}
\email{saeed.msmaths21@iba-suk.edu.pk}

\author{Javed  Hussain}
\address{Department of Mathematics\\
Sukkur IBA university\\
Sindh Pakistan}
\email{javed.brohi@iba-suk.edu.pk}

\keywords{Modified Stochastic Swift-Hohenberg equation, Wiener Process, Stratonovich form, Compactness, Tightness of
Measure, Quadratic variations, Martingale solutions}
\date{\today}
\begin{abstract}
In this paper, we aim to prove the existence of global Martingale solution to Stochastic Constrained Modified Swift-Hohenberg Equation driven by stratonovich multiplicative noise. This equation belongs to class of amplitude equations which describe the appearance of pattern formation in nature. This structure allows us to work in a Hilbert space framework and to apply a stochastic Galerkin method. The existence proof is based on energy-type estimates, the tightness criterion of Brzeźniak and collaborators, and Jakubowski's generalization of the Skorokhod theorem..
\end{abstract}

\maketitle

\baselineskip 12pt

\section{\textbf{\Large  Introduction}}

In this paper, we consider the following stochastic-constrained Modified Swift-Hohenberg evolution equation with  the driven by multiplicative Stratonovich noise.
 \begin{align}{\label{main_Prb_Intr}}
        du &=\pi _{u}(-\Delta^{2}u+2\Delta u -au - u^{2n-1}) ~dt +  \sum_{k=1}^{N} B_{k}(u) \circ dW_{k}\\ \notag
u(0,x) &= u_{0}.  \notag \\
u(t,x) &= 0, ~~~~~~~~ on ~~~ x \in \partial \mathcal{O} \notag
    \end{align}

Where $\mathcal{O} \subset \mathbb{R}^{2}$ is smooth, continuous and bounded domain, $\left(W_{k}\right)_{k=1}^{N}$  is the  $N$ dimensional, $\mathbb{R}^{N}$-valued {Brownian motion} on the $\left( \Omega, \mathbb{F}, \left(\mathcal{F}_{t}\right)_{t\geq 0}, \mathbb{P}\right) $  filtered probability space. The noise in  Stratonovich form $\sum_{k=1}^{N} B_{k}(u) \circ dW_{k}$ (see \cite{Hussain_2005}) is gradient type on the vector fields $B_{k}(u)$ that are tangents to the manifold ${M} =  \{~ u \in {H}, ~|u|_{\mathcal{H}}^{2}=1~\}. $
In study of pattern formation, modified Swift-Hohenberg equation plays an important role \cite{Maria.B.kania, Qu,Dirk}. Connected with Rayleigh–Bénard convection, it has been employed to address a variety of problems, such as Taylor–Couette flow \cite {PC,Pomeau} and in the study of lasers \cite{Lega}. In addition to other areas of scieite research, it is a valuable tool in material science. This elucidates the surface morphologies during crystal growth \cite{5}, self-assembly processes \cite{6}, and phase transitions \cite{7}. Deterministic forms model regular patterns, whereas stochastic versions incorporate randomness, capturing thermal fluctuations \cite{9} and growth uncertainties \cite{8}. This equation facilitates a comprehensive understanding of intricate material behaviors, enabling advancements in the field of thin-film film deposition \cite{10} and photonic materials   \cite{11}. At that time, the focus was on the global attractor, stability of stationary solutions, and pattern selections of solutions of the deterministic Modified Swift-Hohenberg equation \cite{Peletier1, Peletier2,Peletier3}. However, in recent years, there has been a growing interest in Stochastic Swift-Hohenberg.  Stochastic models are more realistic as noise models the small irregular fluctuations produced by the microscopic effects. The approximation representation of parameterizing manifold and non-Markovian reduced systems for a stochastic Swift–Hohenberg equation with additives was analyzed in \cite{Guo2}. The
results for approximation of manifolds for stochastic Swift–Hohenberg equation with multiplicative noise in Stratonovich sense can been seen in \cite{Dirk, Dirk1, Lin, Swift}. A rigorous error estimation verification of the existence of an amplitude equation for the stochastic Swift–Hohenberg equation was provided by Klepeal et al. \cite{Klepel}. The dynamics and invariant manifolds for a nonlocal stochastic Swift–Hohenberg equation with multiplicative noise were presented in \cite{Guo}. Especially, local and global martingale solution of $2D$ stochastic modified Swift-Hohenberg equations with multiplicative noise and periodic boundry has established in \cite{chen}. Although we have prove the existance and uniqueness of the global solution for stochastic-constrained Modified Swift-Hohenberg evolution equation with  the Stratonovich noise on a Hilbert manifold in \cite{SCMSHE}, but, to our best of knowledge, no prior work has been done on the existanece of Martingale solutions to the same equation. We want to fill this gap.\\

The simple version of above equation (\ref{main_Prb_Intr}) using (\ref{prj}) is

\begin{align}{\label{main_eq_st_Ito-2}}
      du &= \left[-\Delta^{2}u+2 \Delta u+F(u)\right] ~dt +  \sum_{k=1}^{N} B_{k}(u)\circ  dW_{k} \\ \notag 
      u& = u_{0} \notag 
 \end{align}

Where  the function  $ F :  \mathcal{V}\longrightarrow \mathcal{H}$ is a map defined as $F(u)=\|  u\|^{2}_{{H}^{2}_{0}} ~u + 2\|    u\|^{2}_{{H}^{1}_{0}} ~u  +\| u\|^{2n}_{{\mathcal{L}}^{2n}} u- u^{2n-1} $ and  $n \in \mathbb{N}$  (or, in a general sense, a real number such that $n>\frac{1}{2}$) and $u_{0} \in \mathcal{V} \cap {M}$.
 
 We can write the problem (\ref{main_eq_st_Ito-2}) in Ito's form,

\begin{align}{\label{main_eq_st_Ito}}
      du &= \left[-\Delta^{2}u+2 \Delta u+F(u) + \frac{1}{2}\sum_{k=1}^{N} m_{k}(u) \right] ~dt +  \sum_{k=1}^{N} B_{k}(u) dW_{k} \\ \notag 
      u& = u_{0} \notag 
 \end{align}

 Where
\begin{align}{\label{m_K_st}}
        m_{k}(u)=d_{u}B_{k}(B_{k}(u)), ~~~ \forall u \in \mathcal{H},~~~~ \text{and}~~~~k=1,2,3,...N
    \end{align}

Also, for the fixed elements $f_{1}, f_{2}, f_{3} , ... f_{N}$ in $\mathcal{V}$, the map $B_{k}: \mathcal{V}\rightarrow \mathcal{V}$ is defined as:
\begin{align}{\label{B_K_st}}
    B_{k}(u) &= \pi_{u}(f_{k})=f_{k}- \langle f_{k},u\rangle u,~~~~~~k= 1,2,3,...N
\end{align}

Where $\mathcal{V}$ , $\mathcal{E}$, $\mathcal{H}$ are the function spaces defined in (\ref{spaces}).

Our aim to prove the existence of Martingale solutions to the  equation \eqref{main_eq_st_Ito-2}. We will follows the same procedure as used by Da Prato and Zabczyk in \cite{Da Prato} using compactness, tightness of measure, quadratic variations, and Martingale representation theorem.\\

Our paper is organized as follows: Section 1 presents the introduction of the paper. Section 2 is about the Stochastic framework and main results. Section 3 states Faedo Galerkin Approximation of the problem  \eqref{main_eq_st_Ito-2}. Section 4 prove the tightness of measure along with some estimates controlled by expectations. Section 5 shows the convergence of Quadratic Variations. Finally, Section 6 reflects the existance of martingale solution to the main problem \eqref{main_eq_st_Ito-2}.

\section{\textbf{\Large Stochastic framework and main results}}

Assume that  spaces $(\mathcal{E}, \|.\|_\mathcal{E}) $, $(\mathcal{V},\|.\|_{\mathcal{V}}) $  and $(\mathcal{H}, |.|_\mathcal{H}) $ are denoted as
    \begin{eqnarray}  {\label{spaces}}
\mathcal{H}:= \mathcal{\mathcal{L}}^2(\mathcal{O}), ~~
\mathcal{V} := {H}_{0}^{1}(\mathcal{O}) \cap {H}^{2}(\mathcal{O}),~~~\text{and} ~~~~~
\mathcal{E} := \mathcal{D}(\mathcal{A}) =  {H}_{0}^{2}(\mathcal{O}) \cap {H}^4(\mathcal{O}).
\end{eqnarray} 
and we suppose the standard choice for $D(A)$ (with clamped boundary conditions \cite{Evans}), That is:
\begin{eqnarray}
D(A) = \bigl\{ u \in H^4(\mathcal{O}) \mid  u= \partial_n u = 0 \text{ ~on~~ } \partial\mathcal{O} \bigr\}
\end{eqnarray}
where $\partial_n u$ denotes the normal derivative on the boundary.\\
Also, these spaces are dense and continuous, that is, 
\begin{align}{\label{Ass_2.2.2_St}}
    \mathcal{E} \hookrightarrow \mathcal{V} \hookrightarrow  \mathcal{H}
\end{align}
And  we denote the operator 
 $\mathcal{A} :  D(\mathcal{A}) \rightarrow {\mathcal{H}}$ as

\begin{align}{\label{operator-A}}
 \mathcal{A}u = \Delta^{2}u-2\Delta u,~ u \in D(\mathcal{A})
\end{align}

Some additional function spaces that we are going to use in this paper are

\begin{itemize}
    \item $C([0, T], \mathcal{V})$ with the norm $\sup_{0 \leq p \leq T} |u(p)|_\mathcal{H}$, for $u \in C([0, T], \mathcal{H})$.
    
    \item $L^2_w([0, T], D( \mathcal{A}))$ denotes the space $L^2([0, T], D( \mathcal{A}))$ endowed with the weak topology.
    
    \item $L^2([0, T],  \mathcal{V})$ with the norm $\|u\|_{L^2([0,T], \mathcal{V})} = \left(\int_0^T \|u(p)\|^2_\mathcal{V} \, dp\right)^{1/2}$.
    
    \item $C([0, T],  \mathcal{V}_w)$ contains $ \mathcal{V}_w$-valued continuous functions, in weak sense, on $[0, T]$.
\end{itemize}

\subsection{Tangent space and Orthogonal Projection}  

 The tangent space is given as 
 $$T_{u}{M}= \{~ h: ~~\langle h, u \rangle = 0~ ,~~\forall ~h \in \mathcal{H}\}$$
In addition, the map  $ \pi_{u}:\mathcal{H} \longrightarrow T_{u}{M}$ is the orthogonal projection onto $u$  and is given by: 
\begin{equation}{\label{lemma_Tangent}}
    \pi_{u}(h)= h-\langle h, u \rangle ~u, ~~~~~~h \in \mathcal{H}.
\end{equation}  

By considering $ u \in \mathcal{E} \cap {M}$ and applying the definition of orthogonal projection (\ref{lemma_Tangent}), the projection of $-\Delta^{2}u+2\Delta u -au - u^{2n-1} $ under the map $\pi_{u}$ using integration by parts  \cite{Brezis_2010} can be calculated as:\\
$\pi _{u}(-\Delta^{2}u+2\Delta u -au - u^{2n-1})$
\begin{align}{\label{prj}}
 &=-\Delta^{2}u+2\Delta u -au - u^{2n-1}
+\langle \Delta^{2}u-2\Delta u +au + u^{2n-1}, u \rangle ~u \notag \\
&=-\Delta^{2}u+2 \Delta u -au - u^{2n-1} + \langle \Delta^{2}u, u \rangle ~u -2\langle \Delta u, u \rangle ~u   ~+a\langle u, u  
\rangle ~u+\langle u^{2n-1}, u \rangle ~u \notag \\
&=-\Delta^{2}u+2 \Delta u -au - u^{2n-1} + \langle \Delta u,  \Delta u \rangle ~u -2\langle - \nabla u, \nabla u \rangle ~u   ~+a\langle u, u  
\rangle ~u+\langle u^{2n-1}, u \rangle ~u \notag \\
&=-\Delta^{2}u+2 \Delta u -au - u^{2n-1} + \| \Delta u\|^{2}_{{\mathcal{L}}^{2}(\mathcal{O})} ~u + 2\| \nabla u\|^{2}_{{\mathcal{L}}^{2}(\mathcal{O})} ~u   +au+\| u\|^{2n}_{{\mathcal{L}}^{2n}(\mathcal{O})} u \notag \\ 
&=-\Delta^{2}u+2 \Delta u  + \|  u\|^{2}_{{H}^{2}_{0}} ~u + 2\|  u\|^{2}_{{H}^{1}_{0}} ~u  ~ +\| u\|^{2n}_{{\mathcal{L}}^{2n}} u- u^{2n-1}
\end{align}

\begin{definition}\label{Martingale solution def}
There is a \textbf{martingale solution } of \eqref{main_eq_st_Ito}  if there exist
\begin{enumerate}
    \item a stochastic basis $(\hat{\Omega}, \hat{\mathcal{F}}, \hat{\mathbb{F}}, \hat{\mathbb{P}})$ with filtration $\hat{\mathbb{F}}$
    \item an $\mathbb{R}^d$-valued $\hat{\mathbb{F}}$-Wiener process $\hat{W}$
    \item and a $\hat{\mathbb{F}}$-progressively measurable process $u: [0, T] \times \hat{\Omega} \to D(\mathcal{A})$ with $\hat{\mathbb{P}}$ a.e. paths

    $$u(\cdot, \omega) \in C([0, T], \mathcal{V}_w) \cap L^2([0, T], D(\mathcal{A}))$$ such that for all $t \in [0, T]$ and $v \in \V$ we have
    \[
    \langle u(t), v \rangle - \langle u(0), v \rangle = \int_0^t \left\langle -\Delta^{2}u(p)+2 \Delta u(p)+F(u(p)) + \frac{1}{2}\sum_{k=1}^{N} m_{k}(u(p)), v\right\rangle dp + \sum_{k=1}^N \int_0^t \langle B_k(u), v \rangle dW_k,
    \]
    holds $\mathbb{P}$-a.s. \cite{ref2}
\end{enumerate}
\end{definition}

\begin{definition}[\textbf{Modulus of continuity}]\label{def:modulus_continuity}
Suppose $v \in C([0, T_1], S)$ then we define modulus of continuity of $v$ as
\[
m(v, \epsilon) = \sup_{\substack{p_1,p_2 \in [0,T_1] \\ |p_1-p_2| \leq \epsilon}} \rho(v(p_1), v(p_2))
\]
\end{definition}

\begin{theorem}\label{thm:sequence_condition}
\cite{ref2} The sequence $\{Y_k\}$ of random variables with values in $S$ satisfies \textbf{[T]} if and only if
\[
\forall \epsilon_1 > 0, \epsilon_2 > 0 \; \exists \delta > 0 : \sup_{k \in \mathbb{N}} \mathbb{P}\{m(Y_k, \delta) > \epsilon_2\} \leq \epsilon_1
\]
\end{theorem}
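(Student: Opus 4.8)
The plan is to read condition \textbf{[T]} as what it is --- tightness of the family of laws $\{\mathrm{Law}(Y_k)\}_k$ on the path space $C([0,T_1],S)$, each $Y_k$ being tacitly a $C([0,T_1],S)$-valued random variable, as the notation $m(Y_k,\cdot)$ already requires --- and then to combine Prokhorov's theorem with the Arzelà--Ascoli characterisation of relative compactness in $C([0,T_1],S)$. By Prokhorov, \textbf{[T]} is equivalent to: for every $\epsilon_1>0$ there is a \emph{compact} $K_{\epsilon_1}\subseteq C([0,T_1],S)$ with $\sup_k\PP\{Y_k\notin K_{\epsilon_1}\}\le\epsilon_1$; and by Arzelà--Ascoli a set $K\subseteq C([0,T_1],S)$ is relatively compact precisely when it is uniformly equicontinuous --- i.e.\ $\sup_{v\in K}m(v,\delta)\to 0$ as $\delta\downarrow 0$ --- together with pointwise relative compactness of the sections $\{v(t):v\in K\}$ in $S$. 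The statement then amounts to translating ``there is a compact $K_{\epsilon_1}$'' into ``there is a $\delta$ making $m(\cdot,\delta)$ uniformly small''.

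First I would settle the implication \textbf{[T]} $\Rightarrow$ (stated condition): for $\epsilon_1,\epsilon_2>0$, take the compact $K_{\epsilon_1}$ from Prokhorov, use Arzelà--Ascoli to get $\delta>0$ with $\sup_{v\in K_{\epsilon_1}}m(v,\delta)\le\epsilon_2$, and note $\{m(Y_k,\delta)>\epsilon_2\}\subseteq\{Y_k\notin K_{\epsilon_1}\}$, so that $\sup_k\PP\{m(Y_k,\delta)>\epsilon_2\}\le\epsilon_1$. For the converse, fix $\epsilon_1>0$ and apply the hypothesis successively, for $n\ge1$, with $\epsilon_2$ replaced by $1/n$ and $\epsilon_1$ by $\epsilon_1 2^{-n}$, getting $\delta_n>0$ with $\sup_k\PP\{m(Y_k,\delta_n)>1/n\}\le\epsilon_1 2^{-n}$; then put
\[
K_{\epsilon_1}:=\overline{\bigl\{\,v\in C([0,T_1],S):\; m(v,\delta_n)\le 1/n\quad\forall\,n\ge 1\,\bigr\}},
\]
check that $K_{\epsilon_1}$ is uniformly equicontinuous, hence compact by Arzelà--Ascoli, and close with the union bound $\sup_k\PP\{Y_k\notin K_{\epsilon_1}\}\le\sum_{n\ge1}\epsilon_1 2^{-n}=\epsilon_1$. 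This is \textbf{[T]}.

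The main obstacle I anticipate is the point, used in the converse, that uniform control of the moduli $m(v,\delta_n)$ by itself already yields a genuinely \emph{compact} family of paths --- equivalently, that the sections $\{v(t):v\in K_{\epsilon_1}\}$ are relatively compact in $S$ for every $t$, not merely bounded. For an abstract complete metric space $S$ this also requires tightness of a single time-marginal $\{\mathrm{Law}(Y_k(t_0))\}_k$ in $S$ plus a propagation step along $[0,T_1]$ (total boundedness is carried through an equicontinuous family, using completeness of $S$), so in the general statement this ingredient has to be part of the hypotheses on $S$. In the applications later in the paper it is automatic: the state spaces that occur are bounded balls of reflexive separable spaces endowed with the weak topology, which on such balls is metrisable and makes them compact; then the marginal condition is free and \textbf{[T]} is controlled entirely by the uniform modulus-of-continuity condition of the theorem. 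The rest is $\epsilon$-bookkeeping together with one use of Prokhorov's theorem.
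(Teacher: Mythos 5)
The paper offers no proof of this statement at all: it is imported verbatim from \cite{ref2} (and, in fact, the condition \textbf{[T]} is never even defined in the present paper), so there is no in-paper argument to compare yours against. Judged on its own terms, your Prokhorov--plus--Arzel\`a--Ascoli route is the standard and correct way to prove the equivalence \emph{when} \textbf{[T]} means tightness of the laws on $C([0,T_1],S)$: the forward direction via $\{m(Y_k,\delta)>\epsilon_2\}\subseteq\{Y_k\notin K_{\epsilon_1}\}$ and the converse via the sets $\{v: m(v,\delta_n)\le 1/n \ \forall n\}$ with the $\epsilon_1 2^{-n}$ union bound are both sound.

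The one genuine gap is exactly the one you flag yourself: in the converse direction, uniform control of the moduli $m(v,\delta_n)$ gives an equicontinuous family, but Arzel\`a--Ascoli in $C([0,T_1],S)$ for a general complete metric space $S$ also requires relative compactness of the sections $\{v(t):v\in K_{\epsilon_1}\}$, which your $K_{\epsilon_1}$ does not provide. You cannot discharge this by an appeal to ``hypotheses on $S$'' that the statement does not contain; as literally written, the equivalence is false for, say, $S$ an infinite-dimensional Hilbert space with its norm topology (take $Y_k\equiv e_k$ constant in time: the modulus condition holds trivially but the laws are not tight). In the source \cite{ref2} and in the Brze\'zniak--Motyl framework this is resolved either by defining \textbf{[T]} to be only the equicontinuity half of the tightness criterion (so that the theorem is a deterministic-quantifier reformulation needing no Prokhorov at all, just the observation that a uniform bound on $\mathbb{P}\{m(Y_k,\delta)>\epsilon_2\}$ is what Lemma \ref{lem:distribution_property} extracts), or by pairing it with a separate compact-containment/boundedness condition on the marginals. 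So your proof is correct for the ``tightness'' reading only after adding a marginal-tightness hypothesis, and for the intended reading it is more machinery than is needed; either way the statement as transcribed in the paper is too imprecise to be proved or refuted without first pinning down \textbf{[T]}.
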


\begin{lemma}\label{lem:distribution_property}
Let $\{Y_k\}$ satisfy \textbf{[T]}. Assume $P_k$ be the distribution/law of $Y_k$ on $C([0, T], S)$, $k \in \mathbb{N}$. Then
\[
\forall \epsilon' > 0, \exists A_{\epsilon'} \subset C([0, T], S) : \sup_{k \in \mathbb{N}} P_k(A_{\epsilon'}) + \epsilon' \geq 1 \text{ and } \lim_{\delta \to 0} \sup_{v \in A_{\epsilon'}} (m(v, \delta)) = 0.
\]
\end{lemma}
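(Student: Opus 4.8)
The plan is to deduce the claim from the equivalent reformulation of condition \textbf{[T]} given in Theorem \ref{thm:sequence_condition}, by the standard countable-intersection (Borel--Cantelli type) construction in which the error budget $\epsilon'$ is split geometrically over a sequence of shrinking time scales.

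Fix $\epsilon' > 0$. For each integer $n \geq 1$ I would apply Theorem \ref{thm:sequence_condition} with $\epsilon_1 = \epsilon'/2^{\,n}$ and $\epsilon_2 = 1/n$ to obtain a scale $\delta_n > 0$ such that
\[
\sup_{k \in \mathbb{N}} \mathbb{P}\bigl\{ m(Y_k, \delta_n) > 1/n \bigr\} \leq \frac{\epsilon'}{2^{\,n}}.
\]
Since $\delta \mapsto m(v,\delta)$ is non-decreasing, I may replace $\delta_n$ by $\min\{\delta_1, \dots, \delta_n\}$ and thus assume $\delta_1 \geq \delta_2 \geq \cdots$ with $\delta_n \to 0$; this only shrinks the events above and enlarges the level sets used below, so the displayed bound is preserved. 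The candidate set is then
\[
A_{\epsilon'} := \bigcap_{n=1}^{\infty} \bigl\{ v \in C([0,T], S) : m(v, \delta_n) \leq 1/n \bigr\}.
\]
A short check shows the functional $v \mapsto m(v,\delta_n)$ is continuous (in fact $2$-Lipschitz) on $C([0,T],S)$, so each set in the intersection is closed, $A_{\epsilon'}$ is a Borel set, and the quantities $P_k(A_{\epsilon'})$ and $\mathbb{P}\{m(Y_k,\delta_n) > 1/n\}$ are all well defined.

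For the mass estimate I would use that $P_k$ is the law of $Y_k$ together with countable subadditivity: for every $k$,
\[
P_k\bigl( C([0,T],S)\setminus A_{\epsilon'}\bigr) = \mathbb{P}\bigl\{ Y_k \notin A_{\epsilon'} \bigr\} \leq \sum_{n=1}^{\infty} \mathbb{P}\bigl\{ m(Y_k,\delta_n) > 1/n \bigr\} \leq \sum_{n=1}^{\infty} \frac{\epsilon'}{2^{\,n}} = \epsilon',
\]
so $P_k(A_{\epsilon'}) \geq 1 - \epsilon'$ for every $k$, whence $\sup_{k\in\mathbb{N}} P_k(A_{\epsilon'}) + \epsilon' \geq 1$ (indeed the same inequality holds with $\inf_k$). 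For the uniform equicontinuity, given $\eta > 0$ I would choose $n$ with $1/n < \eta$; then for any $0 < \delta \leq \delta_n$ and any $v \in A_{\epsilon'}$, monotonicity of $m(v,\cdot)$ yields $m(v,\delta) \leq m(v,\delta_n) \leq 1/n < \eta$, so that $\sup_{v\in A_{\epsilon'}} m(v,\delta) \leq 1/n$ whenever $\delta \leq \delta_n$; letting $n \to \infty$ gives $\lim_{\delta\to 0}\sup_{v\in A_{\epsilon'}} m(v,\delta) = 0$.

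I do not expect a genuine obstacle: this is the standard Arzel\`a--Ascoli packaging of tightness. The only points that merit care are (i) the measurability of the level sets $\{v : m(v,\delta_n) \leq 1/n\}$, settled by the Lipschitz continuity of the modulus functional on $C([0,T],S)$, and (ii) arranging the scales $\delta_n$ to be monotone, which is precisely what lets the tail bound and the uniform equicontinuity come out of a single intersection.
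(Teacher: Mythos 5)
The paper does not actually prove this lemma: it is stated as a cited preliminary (attributed to \cite{ref2}) with no proof given, so there is no in-paper argument to compare yours against. Your proof is correct and is the standard one: the geometric splitting $\epsilon_1 = \epsilon'/2^n$, $\epsilon_2 = 1/n$ applied to the reformulation in Theorem \ref{thm:sequence_condition}, the countable intersection $A_{\epsilon'} = \bigcap_n \{v : m(v,\delta_n) \le 1/n\}$, subadditivity for the mass bound $P_k(A_{\epsilon'}) \ge 1-\epsilon'$ (which is stronger than the stated $\sup_k P_k(A_{\epsilon'}) + \epsilon' \ge 1$), and monotonicity of $\delta \mapsto m(v,\delta)$ for the uniform equicontinuity. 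Your two points of care are both handled correctly: the $2$-Lipschitz estimate $m(v,\delta) \le m(w,\delta) + 2\sup_t \rho(v(t),w(t))$ settles measurability of the level sets, and passing to $\min\{\delta_1,\dots,\delta_n\}$ indeed only shrinks the bad events and enlarges the level sets. (The monotone rearrangement of the $\delta_n$ is not strictly needed — the equicontinuity argument only uses that each $\delta_n>0$ together with monotonicity of $m(v,\cdot)$ — but it is harmless.) No gaps.
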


\begin{definition}[\textbf{Aldous Condition}]\label{def:aldous_condition}
\cite{ref2} The sequence $(Y_k)_{k \in \mathbb{N}}$ satisfies Aldous condition \textbf{[A]} if and only if $\forall \epsilon_1 > 0, \epsilon_2 > 0, \exists \delta' > 0$ in such a way that for every sequence $\{s_k\}$ of stopping times with $s_k \leq T$ we have:
\[
\sup_{k \in \mathbb{N}} \sup_{0 \leq a \leq \delta} \mathbb{P}(\rho(Y_k(s_k + a), Y_k(s_k)) \geq \epsilon_2) \leq \epsilon_1
\]
\end{definition}

\begin{proposition}\label{prop:equivalence_conditions}
\textbf{[T]} and \textbf{[A]} are equivalent.
\end{proposition}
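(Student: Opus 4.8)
The plan is to pass through the characterisation of \textbf{[T]} supplied by Theorem~\ref{thm:sequence_condition}: that result identifies \textbf{[T]} for $\{Y_k\}$ with the modulus-of-continuity condition
\[
(\mathrm{MC})\colon\quad \forall\,\epsilon_1,\epsilon_2>0\ \ \exists\,\delta>0\ :\ \sup_{k\in\mathbb{N}}\mathbb{P}\{m(Y_k,\delta)>\epsilon_2\}\le\epsilon_1 ,
\]
so it is enough to prove that $(\mathrm{MC})$ and \textbf{[A]} are equivalent, which I would do by establishing the two implications separately.

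For $(\mathrm{MC})\Rightarrow\textbf{[A]}$, fix $\epsilon_1,\epsilon_2>0$, apply $(\mathrm{MC})$ with $\epsilon_2/2$ in place of $\epsilon_2$ to obtain $\delta>0$ with $\sup_k\mathbb{P}\{m(Y_k,\delta)>\epsilon_2/2\}\le\epsilon_1$, and set $\delta'=\delta$. For any stopping times $s_k\le T$ and any $a\in[0,\delta']$ with $s_k+a\le T$, the instants $s_k$ and $s_k+a$ differ by at most $\delta$, so Definition~\ref{def:modulus_continuity} gives the pathwise bound $\rho(Y_k(s_k+a),Y_k(s_k))\le m(Y_k,\delta)$; hence $\{\rho(Y_k(s_k+a),Y_k(s_k))\ge\epsilon_2\}\subseteq\{m(Y_k,\delta)>\epsilon_2/2\}$, and taking probabilities and then suprema over $a$ and $k$ yields \textbf{[A]}. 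This direction is elementary.

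The converse $\textbf{[A]}\Rightarrow(\mathrm{MC})$ is the substance — it is Aldous' tightness criterion in the present continuous-path framework. Fix $\epsilon_1,\epsilon_2>0$; I want $\delta>0$ with $\sup_k\mathbb{P}\{m(Y_k,\delta)>\epsilon_2\}\le\epsilon_1$. The first step is to replace the modulus by oscillations over a fixed grid: for a partition $0=t_0<t_1<\dots<t_M=T$ of mesh $\le\delta$, with $M$ of order $T/\delta$, one has the purely deterministic inequality
\[
m(v,\delta)\ \le\ 3\,\max_{0\le i<M}\ \sup_{t\in[t_i,t_{i+1}]}\rho\bigl(v(t),v(t_i)\bigr)\qquad\text{for every }v\in C([0,T],S),
\]
since any two instants at distance $\le\delta$ lie in at most two adjacent cells; a union bound then reduces the task to bounding, for each fixed cell $i$, the probability that $\sup_{t\in[t_i,t_{i+1}]}\rho(Y_k(t),Y_k(t_i))$ exceeds $\epsilon_2/3$, by roughly $\epsilon_1/M$ and uniformly in $k$. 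For this I would run the successive exit times $\sigma_0=t_i$, $\sigma_{l+1}=\inf\{t>\sigma_l:\rho(Y_k(t),Y_k(\sigma_l))\ge\epsilon_2/6\}\wedge t_{i+1}$ (stopping times for the natural filtration of $Y_k$), use continuity of the paths to see that a large oscillation in the cell forces two consecutive exit times to lie within $\delta$ of one another with $\rho$-displacement at least $\epsilon_2/6$ between them, and bring \textbf{[A]} to bear at those stopping times.

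The main obstacle is precisely this last matching. As stated, \textbf{[A]} controls the increment over a \emph{deterministic} gap $a\le\delta'$, whereas the gap between consecutive exit times is random and it is a \emph{supremum} over such gaps that must be controlled — and the probability of a large supremum is not bounded by the supremum of the probabilities. The classical remedy, which I would adopt, has two ingredients: (i) discretise each random gap by further subdividing $[t_i,t_{i+1}]$, so that the $\rho$-displacement between consecutive exit times is captured, up to a further fraction of $\epsilon_2$, by an increment over one of finitely many deterministic lags — this reduces a cell event to finitely many events of exactly the form in \textbf{[A]} (equivalently, one passes to the two-stopping-time form of the Aldous condition); and (ii) control the number of exit times falling in a cell uniformly in $k$, noting that each further exit is itself an \textbf{[A]}-type event, so that $N$ applications of \textbf{[A]} make more than $N$ exits unlikely. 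One must then fix the parameters in the correct order — first the subdivision fineness and the threshold $N$ from $\epsilon_1,\epsilon_2,T$, then invoke \textbf{[A]} with error of order $\epsilon_1/(MN)$ to get its parameter $\delta'$, and finally choose the grid mesh $\delta\le\delta'$. Everything else (the grid inequality, the continuity arguments establishing the properties of the $\sigma_l$, the union bounds) is routine; and since $(\mathrm{MC})\Leftrightarrow\textbf{[A]}$ is classical (Aldous; see also Joffe--M\'etivier and M\'etivier), one may alternatively just cite it.
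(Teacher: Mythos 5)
The paper itself offers no proof of this proposition: it is stated as a known fact, imported from the tightness framework of Brze\'zniak et al.\ and Dhariwal's thesis \cite{ref2}, which in turn rests on Aldous's classical criterion. So there is no in-paper argument to compare against; the relevant comparison is with that classical proof, and your proposal follows exactly its standard route (reduce \textbf{[T]} to the modulus-of-continuity condition via Theorem~\ref{thm:sequence_condition}, then prove the two implications). Your easy direction, $(\mathrm{MC})\Rightarrow\textbf{[A]}$, is complete and correct: the pathwise inclusion $\{\rho(Y_k(s_k+a),Y_k(s_k))\ge\epsilon_2\}\subseteq\{m(Y_k,\delta)>\epsilon_2/2\}$ for $a\le\delta$ is all that is needed, and the factor-$3$ grid inequality you quote for the converse is the right deterministic input.

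The converse, however, remains a sketch rather than a proof. You correctly isolate the genuine difficulty --- \textbf{[A]} as stated in Definition~\ref{def:aldous_condition} bounds, for each fixed deterministic lag $a\le\delta'$, the probability of a large increment at a single stopping time, whereas what must be controlled is the probability that a \emph{supremum} over a random cell exceeds a threshold, and a supremum of probabilities does not bound the probability of a supremum --- and you name the two standard remedies (discretising the random gaps to reduce to finitely many deterministic lags, and bounding the number of successive exit times by iterated applications of \textbf{[A]}). But these steps are described, not carried out: the order of quantifiers in fixing the subdivision fineness, the exit-count threshold $N$, and the parameter $\delta'$ is exactly where the argument can go wrong, and you defer it. As a self-contained proof the hard direction is therefore incomplete; as a justification for the proposition it is acceptable only in the mode you yourself suggest at the end, namely citing Aldous (or Joffe--M\'etivier / M\'etivier), which is precisely what the paper does implicitly. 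In short: same approach as the literature the paper leans on, one direction fully proved, the other an honest but unexecuted outline.
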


\begin{lemma}\label{lem:martingale_space}
The space consisting of the continuous and square-integrable processes, which are martingales and have values in $H$, is represented by $\mathcal{M}^2_T$. Moreover, it can be shown that $\mathcal{M}^2_T$ is a Banach space w.r.t following norm,
\[
\| u \|_{\mathcal{M}^2_T} = \sqrt{\mathbb{E}\left( \sup_{p \in [0,T]} \|u(p)\|^2_H \right)}.
\]
\end{lemma}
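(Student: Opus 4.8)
The plan is to realize $\mathcal{M}^2_T$ as a closed linear subspace of the Banach space $L^2(\Omega; C([0,T], H))$ of (equivalence classes modulo indistinguishability of) strongly measurable maps $\Omega \to C([0,T], H)$, equipped with precisely the norm appearing in the statement. Since $H=\mathcal{L}^2(\mathcal{O})$ is separable, $C([0,T],H)$ is a separable Banach space and the strong measurability of $\omega \mapsto u(\cdot,\omega)$ is automatic for a continuous process; moreover the $C([0,T],H)$-norm is $\sup_{p\in[0,T]}\|u(p)\|_H$, so its $L^2(\Omega)$-norm is exactly $\sqrt{\mathbb{E}\bigl(\sup_{p\in[0,T]}\|u(p)\|_H^2\bigr)}=\|u\|_{\mathcal{M}^2_T}$. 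Because a closed subspace of a Banach space is itself Banach, it suffices to prove (i) that $\mathcal{M}^2_T$ is a linear subspace on which $\|\cdot\|_{\mathcal{M}^2_T}$ is a genuine norm and (ii) that it is closed in $L^2(\Omega; C([0,T], H))$.

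First I would record the routine facts for (i): sums and scalar multiples of continuous square-integrable $H$-valued martingales are again such, so $\mathcal{M}^2_T$ is a vector space; positive homogeneity is immediate; the triangle inequality is the Minkowski inequality applied to $\bigl\|\sup_p\|u(p)+v(p)\|_H\bigr\|_{L^2(\Omega)} \le \bigl\|\sup_p\|u(p)\|_H\bigr\|_{L^2(\Omega)}+\bigl\|\sup_p\|v(p)\|_H\bigr\|_{L^2(\Omega)}$; and $\|u\|_{\mathcal{M}^2_T}=0$ forces $\sup_p\|u(p)\|_H=0$ $\mathbb{P}$-a.s., i.e. $u$ is indistinguishable from $0$, which is why we work with equivalence classes so that the norm separates points.

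For (ii), take a Cauchy sequence $(u^{(n)})_n$ in $\mathcal{M}^2_T$. By completeness of $L^2(\Omega; C([0,T], H))$ there is a limit $u$ in that space, which therefore has $\mathbb{P}$-a.s. continuous paths and satisfies $\mathbb{E}\bigl(\sup_{p}\|u(p)\|_H^2\bigr)<\infty$, so the only thing left is to check that $u$ is adapted and a martingale. Passing to a subsequence (still denoted $u^{(n)}$) with $\sum_n \|u^{(n+1)}-u^{(n)}\|_{\mathcal{M}^2_T}<\infty$, Chebyshev's inequality and the Borel--Cantelli lemma give $\sup_{p\in[0,T]}\|u^{(n)}(p)-u(p)\|_H\to 0$ $\mathbb{P}$-a.s.; in particular, for each fixed $t$, $u^{(n)}(t)\to u(t)$ $\mathbb{P}$-a.s., so $u(t)$ is $\mathcal{F}_t$-measurable (using the usual conditions on the filtration) and $u$ is adapted. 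For the martingale property fix $0\le s\le t\le T$ and $G\in\mathcal{F}_s$; from $u^{(n)}(s)=\mathbb{E}[u^{(n)}(t)\mid\mathcal{F}_s]$ we have $\mathbb{E}[\mathbf{1}_G\,u^{(n)}(s)]=\mathbb{E}[\mathbf{1}_G\,u^{(n)}(t)]$, and since $\|u^{(n)}(s)-u(s)\|_{L^2(\Omega;H)}$ and $\|u^{(n)}(t)-u(t)\|_{L^2(\Omega;H)}$ are both bounded by $\|u^{(n)}-u\|_{\mathcal{M}^2_T}\to 0$, we may pass to the limit to obtain $\mathbb{E}[\mathbf{1}_G\,u(s)]=\mathbb{E}[\mathbf{1}_G\,u(t)]$, i.e. $u(s)=\mathbb{E}[u(t)\mid\mathcal{F}_s]$. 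Hence $u\in\mathcal{M}^2_T$ and, the original Cauchy sequence having a convergent subsequence, $u^{(n)}\to u$ in $\mathcal{M}^2_T$, establishing completeness.

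The one genuinely substantive point is this martingale-closedness step: one must ensure the convergence is strong enough to commute with conditional expectation, which is exactly why extracting a fast-Cauchy subsequence and then invoking $L^2(\Omega;H)$-convergence of the time-slices $u^{(n)}(t)$ is the correct device. Everything else — the Banach-space structure of $L^2(\Omega; C([0,T],H))$, the norm axioms, adaptedness of the limit — is standard. I would also include the small remark that $\mathcal{M}^2_T$ is understood modulo indistinguishability, since otherwise $\|\cdot\|_{\mathcal{M}^2_T}$ is only a seminorm.
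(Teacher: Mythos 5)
Your proposal is correct. The paper itself offers no proof of this lemma: it is stated in the preliminaries as a standard fact (essentially quoted from the cited references of Da Prato--Zabczyk and Dhariwal), so there is no argument in the text to compare against. Your argument is the standard one and is complete: identifying $\mathcal{M}^2_T$ with a subspace of $L^2(\Omega; C([0,T],H))$, verifying the norm axioms modulo indistinguishability, and proving closedness by extracting a fast-Cauchy subsequence to recover adaptedness a.s.\ and using $L^2(\Omega;H)$-convergence of the time slices to pass to the limit in the martingale identity $\mathbb{E}[\mathbf{1}_G\,u^{(n)}(s)]=\mathbb{E}[\mathbf{1}_G\,u^{(n)}(t)]$. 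The remark that the norm is only a seminorm unless one quotients by indistinguishability is a worthwhile precision that the paper's statement glosses over.
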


\subsection{Martingale Representation Theorem}

\begin{theorem}[Martingale Representation Theorem]\label{thm:martingale_representation}
Consider Hilbert space $\mathcal{H}$ and $M \in \mathcal{M}^2_T(H)$ with
\[
\langle M \rangle_s = \int_0^s \left( f(u)Q^{\frac{1}{2}} \right) \cdot \left( f(u)Q^{\frac{1}{2}} \right)^* du
\]
where $f(s)$ is a predictable process.

$\mathcal{H}_0 = Q^{\frac{1}{2}} \mathcal{H}$ is Hilbert space with following inner product
\[
\langle h, k \rangle_{\mathcal{H}_0} := \left\langle Q^{-\frac{1}{2}} h, Q^{-\frac{1}{2}} k \right\rangle_\mathcal{H}, \quad h, k \in \mathcal{H}_0.
\]
And $Q$ is a symmetric, bounded and non-negative operator in $\mathcal{H}$. Then one can find the probability space $(\hat{\Omega}, \hat{\mathcal{F}}, \hat{\mathbb{P}})$, a filtration $\{\mathcal{F}_s\}$ and a $\mathcal{H}$-valued $Q$-Wiener process $W$ defined on $(\Omega \times \hat{\Omega}, \mathcal{F} \times \hat{\mathcal{F}}, \mathbb{P} \times \hat{\mathbb{P}})$ adapted to $\{\mathcal{F} \times \hat{\mathcal{F}}\}$ such that for all $(\omega, \hat{\omega}) \in \Omega \times \hat{\Omega}$,
\[
M(s, \omega, \hat{\omega}) = \int_0^s f(u, \omega, \hat{\omega}) dW(u, \omega, \hat{\omega}), \quad s \in [0, T],
\]
with
\[
M(s, \omega, \hat{\omega}) = M(s, \omega), \text{ and } f(s, \omega, \hat{\omega}) = f(s, \omega).
\]
\end{theorem}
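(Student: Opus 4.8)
The plan is to follow the classical construction of Da Prato and Zabczyk (\cite{Da Prato}, Theorem~8.2), adapted to the present notation. The guiding idea is to ``invert'' the relation $dM(u)=f(u)\,dW(u)$ in order to recover the driving noise, treating first the non-degenerate situation and then the general one by enlarging the probability space with an independent copy of a $Q$-Wiener process. Throughout I write $\Phi(u):=f(u)Q^{1/2}$, so that the hypothesis reads $\langle M\rangle_s=\int_0^s\Phi(u)\Phi(u)^*\,du$.

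\emph{Non-degenerate case.} Assume first that $\Phi(u)$ admits a bounded, predictable inverse for every $u$. Then I would set
\[
W(s):=\int_0^s \Phi(u)^{-1}\,dM(u),\qquad s\in[0,T],
\]
which is a well-defined continuous $\mathcal H$-valued local martingale, because the factorization of $\langle M\rangle$ through $\Phi$ makes $\Phi(u)^{-1}$ integrable with respect to $dM$. A direct computation of the operator quadratic variation gives $\langle\!\langle W\rangle\!\rangle_s=\int_0^s \Phi(u)^{-1}\Phi(u)\Phi(u)^*\bigl(\Phi(u)^{-1}\bigr)^*\,du=sQ$, so by the L\'evy--Kunita--Watanabe characterization $W$ is a $Q$-Wiener process; comparing quadratic and cross-variations then yields $\int_0^s f(u)\,dW(u)=M(s)$, and the required pathwise identities $M(s,\omega,\hat\omega)=M(s,\omega)$, $f(s,\omega,\hat\omega)=f(s,\omega)$ are trivial since no extension is needed here.

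\emph{General case.} In general $\Phi(u)$ need not be injective, so one replaces $\Phi(u)^{-1}$ by the pseudo-inverse acting on $\overline{\mathrm{Range}\,\Phi(u)}$ and compensates on the kernel by independent noise. On an auxiliary stochastic basis $(\hat\Omega,\hat{\mathcal F},\hat{\mathbb P})$ carrying a $Q$-Wiener process $\tilde W$ independent of all data on $\Omega$, I would set, on the product space,
\[
W(s):=\int_0^s \Phi(u)^{-1}\,dM(u)+\int_0^s\bigl(\mathrm{Id}-P_u\bigr)\,d\tilde W(u),
\]
where $P_u$ is the orthogonal projection of $\mathcal H$ onto $\overline{\mathrm{Range}\,\Phi(u)}$. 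The two stochastic integrals are orthogonal continuous martingales by independence, the operator quadratic variation of $W$ is again $sQ$, so $W$ is a $Q$-Wiener process on the product basis; moreover $M(s,\omega,\hat\omega)=M(s,\omega)$ and $f(s,\omega,\hat\omega)=f(s,\omega)$ by construction. Finally $\int_0^s f(u)\,dW(u)=\int_0^s\Phi(u)\Phi(u)^{-1}\,dM(u)=\int_0^s P_u\,dM(u)=M(s)$, the last identity holding because the increments of $M$ take values in $\mathrm{Range}\,\Phi(u)$.

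\emph{Main obstacle.} The technical heart of the argument is the measurability and integrability of the pseudo-inverse process $u\mapsto\Phi(u)^{-1}$ and of the projections $P_u$: the pseudo-inverse is typically unbounded and only densely defined, so one must verify that $\int_0^s\Phi(u)^{-1}\,dM(u)$ is a genuine $\mathcal H_0$-valued stochastic integral --- and this is precisely where the hypothesis that $\langle M\rangle$ factors through $\Phi$ is essential. I would handle this by a regularization (replacing $\Phi(u)$ by $\Phi(u)+\varepsilon\,\mathrm{Id}$, or using spectral truncations), proving the identities for the approximations and passing to the limit, together with a measurable-selection argument for the polar decomposition of $\Phi(u)$ to secure the predictability of $P_u$ and of the pseudo-inverse. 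Once these points are in place, the L\'evy characterization of the $Q$-Wiener process closes the proof.
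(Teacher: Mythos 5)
The paper does not actually prove this statement: Theorem~\ref{thm:martingale_representation} is quoted as a known background result (it is Theorem~8.2 of Da Prato--Zabczyk \cite{Da Prato}, as transcribed in \cite{ref2}) and is only invoked later in the proof of Theorem~\ref{6.1}. So there is no in-paper argument to compare against; your proposal is, in outline, exactly the classical construction from that reference --- recover the noise by inverting the diffusion coefficient, and in the degenerate case compensate on the kernel with an independent $Q$-Wiener process on a product space, closing with the L\'evy characterization. The strategy is the right one and your identification of the measurability/integrability of the pseudo-inverse as the technical heart is accurate.

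There is, however, a concrete inconsistency in your computation that you would need to repair. You set $\Phi(u):=f(u)Q^{1/2}$ and define $W(s):=\int_0^s\Phi(u)^{-1}\,dM(u)$, then claim $\langle\!\langle W\rangle\!\rangle_s=\int_0^s\Phi(u)^{-1}\Phi(u)\Phi(u)^*\bigl(\Phi(u)^{-1}\bigr)^*\,du=sQ$. But in the non-degenerate case that integrand is the identity, so this $W$ has operator quadratic variation $s\,\mathrm{Id}$ and is a \emph{cylindrical} (identity-covariance) Wiener process, not a $Q$-Wiener process; correspondingly $\int_0^s f\,dW=\int_0^s f\Phi^{-1}\,dM=\int_0^s fQ^{-1/2}f^{-1}\,dM$, which is not $M(s)$. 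The object to (pseudo-)invert is $f(u)$ itself: with $W(s):=\int_0^s f(u)^{-1}\,dM(u)$ one gets $\langle\!\langle W\rangle\!\rangle_s=\int_0^s f^{-1}\bigl(fQ^{1/2}\bigr)\bigl(fQ^{1/2}\bigr)^*(f^{-1})^*\,du=sQ$ and $\int_0^s f\,dW=M(s)$ as required. Alternatively one can keep $\Phi^{-1}$ but then view $W$ as a cylindrical Wiener process on $\mathcal H_0$ and insert the factor $Q^{1/2}$ in the right places; either way the bookkeeping between $\mathcal H$ and $\mathcal H_0=Q^{1/2}\mathcal H$ must be made consistent, and the same correction propagates to the compensating term $\int_0^s(\mathrm{Id}-P_u)\,d\tilde W(u)$ in the general case, whose contribution to the quadratic variation must be arranged to add up to $sQ$ rather than to a projection of the identity. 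With that repair (which is exactly how \cite{Da Prato} sets it up), your sketch becomes the standard proof.
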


\section{\textbf{ \Large Faedo Galerkin Approximation}}

Assume that $\mathcal{H}$ has an orthonormal basis $\{e_{j}\}$ that contains the eigenvectors of $\mathcal{A}$ and $\{\lambda_{j}\}$ is the set of eigenvalues of $\mathcal{A}$. Suppose $\mathcal{H}_{n}$ of $\mathcal{H}$ such that $\mathcal{H}_n = \text{span}\{e_{j}\}_{j=1}^{n}$. 
Clearly, for $l\geq k$ we have $\mathcal{H}_{k} \subset \mathcal{H}_{l} $. The linear operator $ \mathcal{Z} _n:  \mathcal{H} \rightarrow \mathcal{H}_{n}$ can be expressed as follows:
\begin{eqnarray}
    \mathcal{Z} _{n}u := \sum_{j=1}^{n} \langle u , e_{j} \rangle e_{j}
 \end{eqnarray}

Therefore, the Faedo Galerkin Approximation for the equation (\ref{main_eq_st_Ito-2}) is

\begin{eqnarray}{\label{approx}}
    \begin{cases}
      du_n &= \left[\mathcal{Z} _{n}(-\Delta^{2}+2 \Delta) (u_n)+\mathcal{Z} _{n}(F(u_n))  \right] ~dt +  \sum_{k=1}^{N} \mathcal{Z}_n( B_{k}(u_n)) \circ dW_{k}  \\
      u_n(0)&=\frac{\mathcal{Z} _{n}(u_0)}{|\mathcal{Z} _{n}(u_0)|} 
      \end{cases} 
\end{eqnarray}

Since $e_j$ are eigen vectors with eigen values $\lambda_j$ of $-\Delta^2 + 2\Delta$ then
\begin{align*}
    (-\Delta^2 + 2\Delta)e_j= (-\lambda_j^2-2\lambda_j)e_j
\end{align*}

And  by definition of $\mathcal{Z}_n$, we get
\begin{align}\label{1}
\mathcal{Z}_n \left( (-\Delta^2 + 2\Delta) u_n \right) &= \sum_{k=1}^n \left\langle (-\Delta^2 + 2\Delta) u_n, e_k \right\rangle e_k 
= \sum_{k=1}^n \left\langle \sum_{j=1}^n c_j (-\lambda_j^2 - 2\lambda_j) e_j, e_k \right\rangle e_k \notag \\
&= \sum_{k=1}^n c_k (-\lambda_k^2 - 2\lambda_k) e_k = (-\Delta^2 + 2\Delta) u_n.
\end{align}

Also, 

\begin{align}\label{2}
\mathcal{Z}_n(B_j (u_n)) &= \mathcal{Z}_n(f_j - \langle f_j, u_n \rangle u_n) = \mathcal{Z}_n(f_j) - \langle f_j, u_n \rangle \mathcal{Z}_n(u_n) \notag \\
&= f_j - \langle f_j, u_n \rangle u_n = B_j (u_n)
\end{align}

Again,

\begin{align}\label{3}
    \mathcal{Z}_n(F(u_n)) &= \mathcal{Z}_n(\|  u_n\|^{2}_{{H}^{2}_{0}} ~u_n + 2\|    u_n\|^{2}_{{H}^{1}_{0}} ~u_n  +\| u_n\|^{2n}_{{\mathcal{L}}^{2n}} u_n- u_n^{2n-1}) \notag \\
    &=\|  u_n\|^{2}_{{H}^{2}_{0}} \mathcal{Z}_n(u_n)+ 2\|    u_n\|^{2}_{{H}^{1}_{0}} \mathcal{Z}_n(u_n)  +\| u_n\|^{2n}_{{\mathcal{L}}^{2n}} \mathcal{Z}_n(u_n)- \mathcal{Z}_n(u_n^{2n-1})\notag\\
    &= \|  u_n\|^{2}_{{H}^{2}_{0}} ~u_n + 2\|    u_n\|^{2}_{{H}^{1}_{0}} ~u_n  +\| u_n\|^{2n}_{{\mathcal{L}}^{2n}} u_n- u_n^{2n-1}\notag\\
    &=F(u_n)
\end{align}

Therefore, using (\ref{1}), (\ref{2}) and (\ref{3}), the equation (\ref{approx}) takes the form

\begin{eqnarray}{\label{approx_final}}
    \begin{cases}
      du_n &= \left[ -\Delta^{2}u_n+2 \Delta u_n +F(u_n)  \right] ~dt +  \sum_{k=1}^{N}  B_{k}(u_n) \circ dW_{k}  \\
      u_n(0)&=\frac{\mathcal{Z} _{n}(u_0)}{|\mathcal{Z} _{n}(u_0)|} 
      \end{cases} 
\end{eqnarray}

And its itô version can be written as

\begin{eqnarray}{\label{apprx}}
    \begin{cases}\label{apprx}
      du &= \left[-\Delta^{2}u+2 \Delta u+F(u) + \frac{1}{2}\sum_{k=1}^{N} m_{k}(u) \right] ~dt +  \sum_{k=1}^{N} B_{k}(u) dW_{k} \\ 
      u_n(0)&=\frac{\mathcal{Z} _{n}(u_0)}{|\mathcal{Z} _{n}(u_0)|} 
       \end{cases} 
\end{eqnarray}

\begin{theorem}\label{manifold}
    If $u_0 \in \mathcal{V}\cap  M$, then $u_n(t) $ that solves \eqref{apprx} is also in $M$.
\end{theorem}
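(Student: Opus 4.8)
## Proof Strategy

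The plan is to show that the quantity $\varphi(t) := |u_n(t)|_{\mathcal H}^2$ is constant in time and equal to $1$, using the It\^o formula applied to the finite-dimensional SDE \eqref{apprx}. Since $u_n(0) = \mathcal Z_n(u_0)/|\mathcal Z_n(u_0)|$ has $\mathcal H$-norm exactly $1$, it suffices to prove that $d\varphi(t) = 0$, i.e. that both the drift and the diffusion coefficients of $d|u_n(t)|_{\mathcal H}^2$ vanish identically.

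First I would apply the It\^o formula to $\varphi(t) = \langle u_n(t), u_n(t)\rangle$. The diffusion part produces, for each $k$, a term of the form $2\langle B_k(u_n), u_n\rangle\,dW_k$. By the definition \eqref{B_K_st}, $B_k(u_n) = f_k - \langle f_k, u_n\rangle u_n = \pi_{u_n}(f_k)$, so $\langle B_k(u_n), u_n\rangle = \langle f_k, u_n\rangle - \langle f_k, u_n\rangle\langle u_n, u_n\rangle = \langle f_k, u_n\rangle(1 - \varphi(t))$; in particular this vanishes whenever $\varphi(t) = 1$. The drift part of $d\varphi$ splits into the contribution of the deterministic part $-\Delta^2 u_n + 2\Delta u_n + F(u_n)$ and the It\^o correction $\tfrac12\sum_k m_k(u_n)$. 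I would show that $2\langle -\Delta^2 u_n + 2\Delta u_n + F(u_n), u_n\rangle + \sum_k\langle m_k(u_n), u_n\rangle + \sum_k |B_k(u_n)|_{\mathcal H}^2$ vanishes on $M$. The term $2\langle -\Delta^2 u_n + 2\Delta u_n + F(u_n), u_n\rangle$ is exactly $2\langle \pi_{u_n}(-\Delta^2 u_n + 2\Delta u_n - a u_n - u_n^{2n-1}), u_n\rangle$ by the computation \eqref{prj} (taking $a$ appropriately; the point is that the right-hand side of \eqref{prj} is a tangent vector), and a projection onto $T_{u_n}M$ is orthogonal to $u_n$, so this inner product is $0$ when $\varphi = 1$. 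It then remains to check the noise-induced terms cancel: $\sum_k\bigl(\langle m_k(u_n), u_n\rangle + |B_k(u_n)|_{\mathcal H}^2\bigr) = 0$ on $M$.

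The cleanest way to handle the last cancellation is to invoke the standard fact underlying the passage from Stratonovich to It\^o form: if the vector fields $B_k$ are tangent to $M$, then the Stratonovich SDE $du_n = (\dots)dt + \sum_k B_k(u_n)\circ dW_k$ preserves $M$ because each $B_k(u_n) \in T_{u_n}M$, and the It\^o correction $\tfrac12\sum_k m_k(u_n)$ with $m_k(u_n) = d_{u_n}B_k(B_k(u_n))$ is precisely what makes the It\^o drift agree with the geometry. Concretely, differentiating the constraint $\langle B_k(v), v\rangle = 0$ (valid for $v \in M$) along the direction $B_k(v)$ gives $\langle d_v B_k(B_k(v)), v\rangle + \langle B_k(v), B_k(v)\rangle = 0$, i.e. $\langle m_k(u_n), u_n\rangle = -|B_k(u_n)|_{\mathcal H}^2$, which is exactly the required identity. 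Summing over $k$ gives the cancellation. Thus $d\varphi(t) = 2(1-\varphi(t))\bigl[\text{stuff}\bigr]dt + 2(1-\varphi(t))\sum_k\langle f_k,u_n\rangle dW_k$, a linear SDE in $\varphi$ whose unique solution with $\varphi(0)=1$ is $\varphi \equiv 1$; alternatively one observes directly that $\varphi \equiv 1$ solves it and invokes uniqueness for the (locally Lipschitz, finite-dimensional) equation.

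The main obstacle is purely bookkeeping rather than conceptual: one must be careful that $F(u_n)$, as defined, already incorporates the projection terms $\|u_n\|_{H^2_0}^2 u_n + 2\|u_n\|_{H^1_0}^2 u_n + \|u_n\|_{\mathcal L^{2n}}^{2n}u_n$, so that $-\Delta^2 u_n + 2\Delta u_n + F(u_n)$ equals the right-hand side of \eqref{prj} exactly (modulo the harmless $au_n$ terms that cancel); and one must verify the differentiation-of-the-constraint identity $\langle m_k(u_n), u_n\rangle + |B_k(u_n)|_{\mathcal H}^2 = 0$ carefully, since $m_k$ is defined via the Fr\'echet derivative $d_{u_n}B_k$ and one needs $B_k$ to be $C^1$ on $\mathcal V$, which follows from \eqref{B_K_st}. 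A secondary technical point is justifying the use of the finite-dimensional It\^o formula: since $u_n(t)$ lives in the finite-dimensional space $\mathcal H_n$ and all coefficients are smooth there (the nonlinearity $F$ is polynomial in the coordinates and $B_k$ is smooth), the SDE \eqref{apprx} is a classical It\^o SDE in $\mathbb R^n$ and the It\^o formula applies without subtlety, provided one has already established (as presumably done in constructing $u_n$) that the solution does not blow up — which in fact follows a posteriori once $|u_n|_{\mathcal H} \equiv 1$ is known, giving a self-consistent argument via a stopping-time localization if needed.
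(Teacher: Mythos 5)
Your proposal is correct and follows essentially the same route as the paper: apply It\^o's formula to $|u_n(t)|_{\mathcal H}^2$, observe that every drift and diffusion coefficient factors through $(|u_n(t)|_{\mathcal H}^2-1)$, and conclude $|u_n(t)|_{\mathcal H}^2\equiv 1$ by uniqueness for the resulting linear SDE started at zero (the paper's equation for $\eta(t)=|u_n(t)|_{\mathcal H}^2-1$ in \eqref{Inv_eq}--\eqref{las_Pb}). The only caution is that your tangency/orthogonality justifications establish the drift cancellations only \emph{on} $M$, whereas the uniqueness argument needs the exact factorization $(\text{drift})=(|u_n|^2-1)\,g(u_n)$ off $M$ as well — this does hold and is what the paper records, but it requires the explicit computation rather than the geometric argument alone.
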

\begin{proof}
     By applying the Itô lemma (see \cite{32}) to $ \gamma(u_n) = \frac{1}{2}|u_n|_{\mathcal{H}}^{2}$, we get \cite{SCMSHE}

\begin{align}{\label{Inv_eq}}
 &\left(|u_n|^{2}_{\mathcal{H}}-1\right) 
 = \sum_{k=1}^{N} \int_{0}^{t} 2\langle u_n(p), f_{k}\rangle(|u_n(p)|_{\mathcal{H}}^{2}-1) ~dW_{k}(p) \\
    &+ \int_{0}^{t \wedge \tau_{\ell}} \left( 2\|  u_n(p)\|^{2}_{\mathcal{H}^{2}_{0}}  + 4\|    u_n(p)\|^{2}_{\mathcal{H}^{1}_{0}}   +2\| u_n(p)\|^{2n}_{{\mathcal{L}}^{2n}} 
    +3 \langle f_{k}, u_n(p)\rangle^{2} - |f_{k}|^{2}\right)(|u_n(p)|_{\mathcal{H}}^{2}-1) ~dp, ~~\mathbb{P}-a.s
\end{align}

For more effortless simplification, set

\begin{align*}
    \eta ( t) &=  |u_n(t)|^{2}_{\mathcal{H}}-1\\
    a_{1}(t) & = 2\langle u_n(t), f_{1}\rangle \\
    a_{2}(t) &= \left( 2\|  u_n(t)\|^{2}_{\mathcal{H}^{2}_{0}}  + 4\|    u_n(t)\|^{2}_{\mathcal{H}^{1}_{0}}   +2\| u_n(t)\|^{2n}_{{\mathcal{L}}^{2n}} 
    +3 \langle f_{k}, u_n(t)\rangle^{2} - |f_{k}|^{2}\right) \\
    F_{1}(t,  \eta ( t)) &= a_{1}(t) \eta ( t) \\
    F_{2}(t, \eta ( t)) &= a_{2}(t) \eta ( t)
\end{align*}

We have

\begin{align}{\label{las_Pb}}
 \eta(t) 
 &=  \int_{0}^{t \wedge \tau_{\ell}} F_{1}(p,   \eta ( p)) ~dW_{k}(p) + \int_{0}^{t \wedge \tau_{\ell}} F_{2}(p,  \eta ( p))  ~dp, ~~~~~~~\mathbb{P}-a.s \notag\\
 \text{and}~~~~ \eta(0) &=  |u(0)|^{2}_{\mathcal{H}}-1 = 0
\end{align}

Referring \cite{SCMSHE}, we have a unique solution t equation \eqref{las_Pb}, that is, $\eta(0)= \eta (t)$ and
\begin{equation*}
 |u_n(t)|^{2}_{\mathcal{H}}= 1. ~~~~~ \forall t \in [0,T]
\end{equation*}
 Thus, this proves other theorem.
\end{proof}

\section{\textbf{\Large Tightness of Measures}}
This section is devoted to the proof that the is set of measures $\{\mathcal{L}(u_n) : n \in \mathbb{N}\}$ is tight on $\mathcal{X}_{T}$. Before showing this, we prove some important estimates.
\begin{theorem}\label{thm:4.1}
Let $u_n(t)$ solves \eqref{approx} then
\begin{align}
&\sup_{ 1\leq n} \mathbb{E}\left[ \sup_{p\in[0,T]} \|u_n(p)\|_{\mathcal{V}}^2 \right] \leq K_1, \label{eq:5.1} \\
&\sup_{1\leq n} \mathbb{E}\left[ \sup_{p\in[0,T]} \|u_n(p)\|^{2n}_{\mathcal{L}^{2n}} \right] \leq K_2, \label{eq:5.2} \\
&\sup_{1\leq n} \mathbb{E}\left[ \int_0^T \|u_n(p)\|^2_{\mathcal{E}} \, dp \right] \leq K_3, \label{eq:5.3}
\end{align}
for some constants $K_1, K_2, K_3 > 0$.
\end{theorem}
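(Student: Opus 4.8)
The plan is to apply the It\^o formula to a suitable energy functional for the Galerkin system \eqref{approx_final}–\eqref{apprx} and then extract the three bounds from the resulting energy (in)equality by the standard chain: pathwise energy identity, Burkholder–Davis–Gundy for the stochastic term, Gronwall, and finally taking suprema and expectations. Concretely, I would first work with $\Phi(u_n) = \tfrac12\|u_n\|_{\mathcal V}^2$. Since $e_j$ are eigenvectors of $\mathcal A = \Delta^2 - 2\Delta$, the natural energy is $\langle \mathcal A u_n, u_n\rangle = \|\Delta u_n\|_{\mathcal H}^2 + 2\|\nabla u_n\|_{\mathcal H}^2$, which is equivalent to $\|u_n\|_{\mathcal V}^2$ up to constants (by elliptic regularity with clamped boundary conditions and the fact that $u_n \in M$, so $\|u_n\|_{\mathcal H} = 1$ is controlled); this equivalence is what lets \eqref{eq:5.1} and \eqref{eq:5.3} be stated in terms of $\mathcal V$ and $\mathcal E$ respectively. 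Applying It\^o to $t \mapsto \tfrac12\langle \mathcal A u_n(t), u_n(t)\rangle$ along \eqref{apprx} produces: the dissipation term $-\|\mathcal A u_n\|_{\mathcal H}^2 = -\|u_n\|_{\mathcal E}^2$ (up to equivalence) with a good sign, the reaction term $\langle \mathcal A u_n, F(u_n)\rangle$, the It\^o correction $\tfrac12\sum_k \langle \mathcal A u_n, m_k(u_n)\rangle$, the quadratic-variation term $\tfrac12\sum_k \|\mathcal A^{1/2} B_k(u_n)\|_{\mathcal H}^2$ (morally $\tfrac12\sum_k \langle \mathcal A B_k(u_n), B_k(u_n)\rangle$), and the martingale term $\sum_k \int_0^t \langle \mathcal A u_n, B_k(u_n)\rangle\, dW_k$.

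The main obstacle is controlling the reaction term $\langle \mathcal A u_n, F(u_n)\rangle$, where $F(u) = \|u\|_{H^2_0}^2 u + 2\|u\|_{H^1_0}^2 u + \|u\|_{\mathcal L^{2n}}^{2n} u - u^{2n-1}$. The three "norm-coefficient" terms are genuinely helpful: since $\|u_n\|_{H^2_0}^2$ and $\|u_n\|_{H^1_0}^2$ are (up to constants) the energy itself, terms like $\|u_n\|_{H^2_0}^2 \langle \mathcal A u_n, u_n\rangle$ come with the right sign or can be absorbed, and $\langle \mathcal A u_n, \|u_n\|_{\mathcal L^{2n}}^{2n} u_n\rangle = \|u_n\|_{\mathcal L^{2n}}^{2n}\langle \mathcal A u_n, u_n\rangle \geq 0$ — in fact these conspire to give coercivity, which is the whole point of the constrained/projected structure. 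The dangerous piece is $-\langle \mathcal A u_n, u_n^{2n-1}\rangle = -\langle \Delta^2 u_n - 2\Delta u_n, u_n^{2n-1}\rangle$; after integration by parts this is $-\int (\Delta u_n)\,\Delta(u_n^{2n-1}) - 2\int |\nabla u_n|^2 (2n-1) u_n^{2n-2}$, and one must bound $\Delta(u_n^{2n-1})$, which expands into terms with $u_n^{2n-2}\Delta u_n$ and $u_n^{2n-3}|\nabla u_n|^2$. Here I would invoke the 2D setting: use the Gagliardo–Nirenberg and Ladyzhenskaya-type inequalities ($\mathcal O \subset \mathbb R^2$), interpolating $L^p$ norms of $u_n$ and $\nabla u_n$ against $\|u_n\|_{\mathcal E}$ and $\|u_n\|_{\mathcal H}=1$, together with Young's inequality, to absorb the top-order contribution into $\tfrac12\|u_n\|_{\mathcal E}^2$ and leave a remainder polynomial in $\|u_n\|_{\mathcal V}^2$. (If the sign of this term is favourable after integration by parts — which happens for the monomial nonlinearity when $n \geq 1$ because $-\int (\Delta u_n) \cdot (2n-1)(2n-2) u_n^{2n-3}|\nabla u_n|^2$ and $-\int (2n-1) u_n^{2n-2}(\Delta u_n)^2 \leq 0$ — then this step is easier, but I would not bank on it and would carry the Young's-inequality absorption argument regardless.)

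With the reaction term handled, the remaining terms are routine: the It\^o correction $\langle \mathcal A u_n, m_k(u_n)\rangle$ and the quadratic-variation term $\|\mathcal A^{1/2} B_k(u_n)\|_{\mathcal H}^2$ are estimated using \eqref{B_K_st}, \eqref{m_K_st}, the fixed $f_k \in \mathcal V$, and $|u_n|_{\mathcal H} = 1$ (from Theorem \ref{manifold}); these give bounds of the form $C(1 + \|u_n\|_{\mathcal V}^2)$, again with the top-order part absorbable into $\tfrac12\|u_n\|_{\mathcal E}^2$ if necessary since $\mathcal A^{1/2}B_k(u_n)$ involves at most $\|u_n\|_{\mathcal V}$-type quantities once the $f_k$ are in $\mathcal V$. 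For the martingale term I take the supremum over $[0,t]$ first and apply Burkholder–Davis–Gundy:
\[
\mathbb E\Big[\sup_{s\le t}\Big|\sum_k\int_0^s \langle \mathcal A u_n, B_k(u_n)\rangle\, dW_k\Big|\Big]
\le C\,\mathbb E\Big[\Big(\int_0^t \sum_k \langle \mathcal A u_n, B_k(u_n)\rangle^2\, dp\Big)^{1/2}\Big],
\]
then bound $\langle \mathcal A u_n, B_k(u_n)\rangle = \langle \mathcal A^{1/2} u_n, \mathcal A^{1/2} B_k(u_n)\rangle \le \|u_n\|_{\mathcal V}\cdot C$ (using $f_k\in\mathcal V$), pull the $\sup$ out as $\big(\sup_{p\le t}\|u_n(p)\|_{\mathcal V}\big)\cdot(\cdots)^{1/2}$, and apply Young's inequality $ab \le \tfrac14 a^2 + b^2$ to absorb $\tfrac14\,\mathbb E[\sup_{p\le t}\|u_n(p)\|_{\mathcal V}^2]$ into the left-hand side. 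This yields
\[
\mathbb E\Big[\sup_{p\le t}\|u_n(p)\|_{\mathcal V}^2\Big] + \mathbb E\Big[\int_0^t \|u_n(p)\|_{\mathcal E}^2\, dp\Big]
\le C_1 + C_2\int_0^t \mathbb E\Big[\sup_{r\le p}\|u_n(r)\|_{\mathcal V}^2\Big]\, dp,
\]
and Gronwall's lemma closes \eqref{eq:5.1} and \eqref{eq:5.3} with constants independent of $n$ (the initial data contributes $\|u_n(0)\|_{\mathcal V}^2 \le C\|u_0\|_{\mathcal V}^2$ uniformly, since $\mathcal Z_n$ is a contraction and $|\mathcal Z_n u_0| \to 1$). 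Finally \eqref{eq:5.2} is obtained as a by-product: the term $\int_0^t \|u_n(p)\|_{\mathcal L^{2n}}^{2n}\langle \mathcal A u_n, u_n\rangle\, dp$ appearing with a good sign in the energy identity, combined with the lower bound $\langle \mathcal A u_n, u_n\rangle \ge c\|u_n\|_{\mathcal V}^2 \ge c\,|u_n|_{\mathcal H}^2 = c$, shows $\mathbb E\int_0^T \|u_n\|_{\mathcal L^{2n}}^{2n}\,dp$ is bounded; to upgrade to the pathwise supremum in \eqref{eq:5.2} I would alternatively apply It\^o directly to $\tfrac{1}{2n}\|u_n\|_{\mathcal L^{2n}}^{2n}$ (or to $\Phi(u_n) + \tfrac{1}{2n}\|u_n\|_{\mathcal L^{2n}}^{2n}$ as a combined Lyapunov functional) and repeat the BDG–Young–Gronwall scheme, using the already-established bound \eqref{eq:5.3} to control the cross terms. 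All constants depend only on $T$, $\|u_0\|_{\mathcal V}$, $N$, $\{\|f_k\|_{\mathcal V}\}$, $n$ and the domain $\mathcal O$, hence are uniform in $n$.
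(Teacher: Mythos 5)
Your overall scheme (It\^o's formula on an energy functional, estimate each integral, BDG on the stochastic term, Young, Gronwall) is the same one the paper uses for \eqref{eq:5.3}, and your treatment of the martingale, $m_k$ and $B_k$ terms is consistent with the paper's. There are, however, two substantive discrepancies. First, the paper does not prove \eqref{eq:5.1} and \eqref{eq:5.2} here at all: it imports them from the companion paper [SCMSHE], where the uniform bound on $\mathbb{E}[\mathcal{Y}(u)]$ for the combined Lyapunov functional $\mathcal{Y}(u)=\tfrac12\|u\|_{\mathcal{V}}^2+\tfrac1{2n}\|u\|_{L^{2n}}^{2n}$ is established, and it then uses the resulting a priori bound $\|u_n\|_{\mathcal{V}}^2< C_1$ as an input when estimating $I_{2,k}$, $I_{3,k}$ and $I_4$ in the proof of \eqref{eq:5.3}. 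You propose to prove all three estimates from scratch in one pass, which is more ambitious but removes the crutch the paper leans on.

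Second, and this is the genuine gap: your sign analysis of the reaction term is backwards, and without it the Gronwall argument does not close. In the energy identity for $\tfrac12\langle\mathcal{A}u_n,u_n\rangle$ the drift contributes $+\langle\mathcal{A}u_n,F(u_n)\rangle$, so the projection-induced terms give
\begin{equation*}
\bigl(\|u_n\|_{H^2_0}^2+2\|u_n\|_{H^1_0}^2+\|u_n\|_{L^{2n}}^{2n}\bigr)\,\langle\mathcal{A}u_n,u_n\rangle
=\bigl(\|u_n\|_{\mathcal{V}}^2+\|u_n\|_{L^{2n}}^{2n}\bigr)\,\|u_n\|_{\mathcal{V}}^2\;\ge 0,
\end{equation*}
i.e.\ they \emph{increase} the energy; they are not coercive and cannot be moved to the left-hand side. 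This remainder is quartic in the energy, and the best absorption available from the constraint ($\langle\mathcal{A}u_n,u_n\rangle\le\|\mathcal{A}u_n\|_{\mathcal{H}}$ since $|u_n|_{\mathcal{H}}=1$) produces $\tfrac12\|\mathcal{A}u_n\|^2+C(\|u_n\|_{\mathcal{V}}^4+\|u_n\|_{L^{2n}}^{4n})$, so the inequality you write down, with a remainder that is \emph{linear} in $\mathbb{E}[\sup_r\|u_n(r)\|_{\mathcal{V}}^2]$, does not follow; a superlinear Gronwall inequality gives at best a local-in-time bound. The same sign error undermines your "by-product" derivation of \eqref{eq:5.2}: the term $\int_0^t\|u_n\|_{L^{2n}}^{2n}\langle\mathcal{A}u_n,u_n\rangle\,dp$ sits on the growth side of the identity, so it yields no control of $\int_0^T\|u_n\|_{L^{2n}}^{2n}\,dp$. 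The mechanism that actually rescues these estimates is the cancellation inside the combined functional $\mathcal{Y}$ (the It\^o derivative of the $\tfrac1{2n}\|u\|_{L^{2n}}^{2n}$ part produces terms that offset the quartic contributions), together with the constraint $|u_n|_{\mathcal{H}}=1$; you mention this functional only as an afterthought, whereas it has to be the starting point. Your integration-by-parts/Gagliardo--Nirenberg treatment of $-\langle\mathcal{A}u_n,u_n^{2n-1}\rangle$ and your BDG--Young--Gronwall closure are fine once that structural issue is repaired.
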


\begin{proof}
   In \cite{SCMSHE} we have prove that, for the equation $\mathcal{Y}(u) = \frac{1}{2} \|u\|^{2}_{\mathcal{V}} + \frac{1}{2n} \|u\|^{2n}_{\mathcal{\mathcal{L}}^{2n}}$, expectation $E\{\mathcal{Y}(u)\}$ is uniformly bounded. It follows that there are constant $K_1$ and $K_{2}$ such that

\begin{align*}
&\sup_{ 1\leq n} \mathbb{E}\left[ \sup_{p\in[0,T]} \|u_n(p)\|_{\mathcal{V}}^2 \right] \leq K_1 \\
&\sup_{1\leq n} \mathbb{E}\left[ \sup_{p\in[0,T]} \|u_n(p)\|^{2n}_{\mathcal{L}^{2n}} \right] \leq K_2
\end{align*}

Now, let us prove \eqref{eq:5.3}. To prove this inequality, we will apply the Itô lemma to the following equation.

\begin{align}{\label{psi}}
    \Psi (u) = \frac{1}{2}\|u_n\|^2_{\mathcal{V}}
\end{align}

From \cite{JH}, we have

\begin{align}\label{eqs}
    d_{u_n}(\Psi (h) ) = \langle u_n, h\rangle_{\mathcal{V}},~~and \qquad   d^2_{u}(\Psi (h,h') ) = \langle h, h'\rangle_{\mathcal{V}}
\end{align}

using Itö lemma together with \eqref{eqs} to \eqref{psi}, we get

\begin{align}{\label{Ito-lemma-on energy}}
    \Psi \left(u_n(t )\right) -\Psi(u_n({0}))&= \sum_{k=1}^{N} \int_{0}^{t } \langle \Psi'(u_n(p)) , B_{k}(u_n(p)) \rangle ~dW_{k}(p)+\frac{1}{2} \sum_{k=1}^{N} \int_{0}^{t} \langle \Psi'(u_n(p)) , m_{k}(u_n(p)) \rangle ~dp \notag \\
    &~~~~+\frac{1}{2} \sum_{k=1}^{N} \int_{0}^{t } \Psi''(u_n(p))\left( B_{k}(u_n(p)) , B_{k}(u_n(p)) \right) ~dp \notag \\
    &~~~~+ \int_{0}^{t } \langle \Psi'(u_n(p)) , -\Delta^{2}u_n(p)+2 \Delta u_n(p)+F(u_n(p)) \rangle ~dp \notag\\
    &= \sum_{k=1}^{N} I_{1,k} + \sum_{k=1}^{N} I_{2,k} + \sum_{k=1}^{N} I_{3,k} + I_{4} , ~~~~~~~\mathbb{P}-a.s. ~~ \forall t \in [0,T]
\end{align}

Now, we will deal each integral separately. Let us start with $I_{1,k}$.\\
In \cite{SCMSHE}, we shown that the integral $I_{1,k}$ is martingale and

\begin{align}{\label{I-1,kintegral}}
    \mathbb{E} \left(I_{1,k}\right) =0
\end{align}

Estimate $I_{2,k}$.

\begin{align}\label{1}
   \int_{0}^{t} \langle \Psi'(u_n(p)), m_{k}(u_n(p))\rangle dp&=  \int_{0}^{t} \left \langle u_n(p),m_{k}(u_n(p))\right \rangle _{\mathcal{V}}~dp = \int_0^t \left \langle u_n, - \left\langle f_{k}, B_{k}(u_n) \right\rangle u_n- \left\langle f_{k}, u_n \right\rangle B_{k}(u_n)\right \rangle _{\mathcal{V}} ~dp \notag\\
   &=\int_0^t \left(-\|u_n\|_{\mathcal{V}}^{2} \left\langle f_{k}, B_{k}(u_n(p))\right\rangle- \left\langle f_{k}, u_n(p)\right\rangle \left\langle u_n, B_{k}(u_n(p))\right\rangle_{\mathcal{V}} \right) ~dp \notag\\
&\leq \left|\int_0^t -\lVert u_n(p) \rVert^2_{\mathcal{V}} \langle f_k, B_k(u_n) \rangle_{\mathcal{H}} - \langle f_k, u_n(p) \rangle_{\mathcal{H}} \langle u_n(p), B_k(u_n(p)) \rangle_{\mathcal{V}} \, dp \right| \notag\\
&\leq \int_0^t \left| -\lVert u_n(p) \rVert^2_{\mathcal{V}} \langle f_k, B_k(u_n) \rangle_{\mathcal{H}} - \langle f_k, u_n(p) \rangle_{\mathcal{H}} \langle u_n(p), B_k(u_n(p)) \rangle_{\mathcal{V}} \right| \, dp \notag\\
&\leq \int_0^t \left| \lVert u_n(p) \rVert^2_{\mathcal{V}} \langle f_k, B_k(u_n) \rangle_{\mathcal{H}} \right| + \left| \langle f_k, u_n(p) \rangle_{\mathcal{H}} \langle u_n(p), B_k(u_n(p)) \rangle_{\mathcal{V}} \right| \, dp \notag\\
&\leq \int_0^t \lVert u_n(p) \rVert^2_{\mathcal{V}} \left| \langle f_k, B_k(u_n) \rangle_{\mathcal{H}} \right| + \left| \langle f_k, u_n(p) \rangle_{\mathcal{H}} \right| \left| \langle u_n(p), B_k(u_n(p)) \rangle_{\mathcal{V}} \right| \, dp \notag\\
&\leq \int_0^t \lVert u_n(p) \rVert^2_{\mathcal{V}} \lVert f_k \rVert^2_{\mathcal{H}} \lVert B_k(u_n) \rVert^2_{\mathcal{H}} + \lVert f_k \rVert^2_{\mathcal{H}} \lVert u_n(p) \rVert^2_{\mathcal{H}} \left| \langle u_n(p), B_k(u_n(p)) \rangle_{\mathcal{V}} \right| \, dp \notag\\
&\leq \int_0^t \lVert u_n(p) \rVert^2_{\mathcal{V}} \lVert f_k \rVert^2_{\mathcal{H}} \lVert B_k(u_n) \rVert^2_{\mathcal{H}} + \lVert f_k \rVert^2_{\mathcal{H}} \lVert u_n(p) \rVert^2_{\mathcal{H}} \lVert u_n(p) \rVert^2_{\mathcal{V}} \lVert B_k(u_n(p)) \rVert^2_{\mathcal{V}} \, dp 
\end{align}

But
\begin{align}\label{2}
\lVert B_k(u_n) \rVert^2_\mathcal{V}  &= \lVert f - \langle f_k, u_n \rangle_{\mathcal{H}} u_n \rVert^2_\mathcal{V}  \notag \\
\lVert B_k(u_n) \rVert^2_\mathcal{V}  &\leq \lVert f_k \rVert^2_\mathcal{V}  + |f_k|^2_{\mathcal{H}} |u_n|^2_{\mathcal{H}} \lVert u_n \rVert^2_\mathcal{V} 
\end{align}

And
\begin{equation}\label{3}
f_k \in \mathcal{V} \hookrightarrow \mathcal{H},
\end{equation}
Also, we know that
\begin{equation}\label{4}
\lVert u_n \rVert_{\mathcal{V}}^2 < C_1.
\end{equation}

Combining all above inequalities \eqref{1},\eqref{2},\eqref{3}, and \eqref{4}, we can find a constant $C_2$ such that

\begin{align}\label{I-2,kintegral}
    I_{2,k} \leq C_{2}
\end{align}

Again consider the integral $I_{3,k}$.

\begin{align}
   \int_{0}^{t } \Psi''(u_n(p))\left( B_{k}(u_n(p)) , B_{k}(u_n(p)) \right) ~dp &=    \int_{0}^{t } \|B_{k}(u_n(p))\|_{\mathcal{V}}^{2}~dp
\end{align}

Using \eqref{2},\eqref{3}, and \eqref{4} we can deduce that there is a constant $C_3$ such that

\begin{align}\label{I-3,kintegral}
    I_{3,k} \leq C_{3}
\end{align}

Finally, we will find the estimation for $I_{4,k}$.\\
$\int_{0}^{t } \langle \Psi'(u_n(p)) , -\Delta^{2}u_n(p)+2 \Delta u_n(p)+F(u_n(p)) \rangle ~dp$

\begin{align*}
     &= \int_{0}^{t } \langle u_n(p) , -\Delta^{2}u_n(p)+2 \Delta u_n(p)+F(u_n(p)) \rangle_{\mathcal{V}} ~dp\notag \\
    &= \int_{0}^{t }\left( \langle u_n(p) , -\Delta^{2}u_n(p)+2 \Delta u_n(p) \rangle_{\mathcal{V}} + \langle u_n(p) , F(u_n(p)) \rangle_{\mathcal{V}}\right)~dp\notag \\
    &\leq \left|\int_{0}^{t } \langle u_n(p) , -\Delta^{2}u_n(p)+2 \Delta u_n(p)+F(u_n(p)) \rangle_{\mathcal{V}} ~dp\right|\notag \\
    & \leq \int_{0}^{t }  \left(\|u_n(p)\|_\mathcal{V}  \| -\Delta^{2}u_n(p)+2 \Delta u_n(p)\|_\mathcal{V}+ \|\langle u_n(p) , F(u_n(p)) \rangle_{\mathcal{V}}\| \right)~dp\notag \\
    & \leq \int_{0}^{t }  \left(\|u_n(p)\|_\mathcal{V}  \| \mathcal{A}u_n(p)\|_\mathcal{V}+ \|\langle u_n(p) , F(u_n(p)) \rangle_{\mathcal{V}}\| \right)~dp\notag \\
\end{align*}

But, from elliptic regularity for 
$\mathcal{A}$ (with clamped boundary conditions) \cite{Mazya}, we can infer that

\begin{align}
    c_1 \|Au_n\|_{\mathcal{V}} \leq \|u_n\|_{D(A)} \leq c_2 \|Au_n\|_{\mathcal{V}}
\end{align}
where \( \|u_n\|_{D(A)} = \left( \|u_n\|_{\mathcal{V}}^2 + \|Au_n\|_{\mathcal{V}}^2 \right)^{1/2} \).

Therefore, there is a constant $C_2$ such that

\begin{align}{\label{Au}}
      \|Au_n\|_{\mathcal{V}} \leq -C_{2} \|u_n\|_{D(A)}
\end{align}

But,

\begin{align}\label{4.16}
    \| \langle u_n(p) , F(u_n(p)) \rangle_{\mathcal{V}} \| \leq  \| u_n(p)  \|_{\mathcal{V}}   \| F(u_n(p))  \|_{\mathcal{V} }
\end{align}

Substituting $u_{1}=u_n$, and $u_{2}$ in the lemma 2.8 of \cite{SCMSHE} and using $\mathcal{V} \hookrightarrow  \mathcal{H}$, we have 

\begin{align}\label{4.17}
    \| F(u_n(p))  \|_{\mathcal{V}} \leq \left( 2C \|u_n(p)_{}\|_{\mathcal{V}}^{2} \notag +C_{n} \left[  \left( \frac{2n-1}{2}\right)  \|u_n(p)_{}\|_{\mathcal{V}}^{2n-1}\|u_n(p)_{}\|_{\mathcal{V}}  +  \|u_n(p)_{}\|_{\mathcal{V}}^{2n} + \left( 1+ \|u_n(p)_{}\|_{\mathcal{V}}^{2}\right)^{\frac{1}{3}} \right] \right)\|u_n(p)_{}\|_{\mathcal{V}} \tag{4.17}
\end{align}

Using \eqref{4.16}, and \eqref{4.17} together with AM-GM inequality $\|u_n(p)\|_\mathcal{V} \cdot \|u_n(p)\|_{D(\mathcal{A})} \leq \frac{\|u_n(p)\|_\mathcal{V}^2 + \|u_n(p)\|_{D(\mathcal{A})}^2}{2}$, we have

$$\int_{0}^{t } \langle \Psi'(u_n(p)) , -\Delta^{2}u_n(p)+2 \Delta u_n(p)+F(u_n(p)) \rangle ~dp \leq   $$
\begin{align*}
  \int_{0}^{t }  
    \begin{bmatrix}
       -\frac{C_{2}}{2}\left({\|u_n(p)\|_\mathcal{V}^2 + \|u_n(p)\|_{D(\mathcal{A})}^2}\right)+  2C \|u_n(p)_{}\|_{\mathcal{V}}^{4} \notag \\+C_{n} \left\{  \left( \frac{2n-1}{2}\right) \left( \|u_n(p)_{}\|_{\mathcal{V}}^{2n-1}\right) (\|u_n(p)_{}\|_{\mathcal{V}})  + \left( \|u_n(p)_{}\|_{\mathcal{V}}^{2n}\right) + \left( 1+ \|u_n(p)_{}\|_{\mathcal{V}}^{2}\right)^{\frac{1}{3}} \right\} \|u_n(p)_{}\|^{2}_{\mathcal{V}}
    \end{bmatrix} ~dp\notag \\
\end{align*}

$$\int_{0}^{t } \langle \Psi'(u_n(p)) , -\Delta^{2}u_n(p)+2 \Delta u_n(p)+F(u_n(p)) \rangle ~dp \leq$$
\begin{align*}
    -\frac{C_{2}C_{3}}{2}T  \int_{0}^{t }  \| u_n(p)\|^{2}_{D(\mathcal{A})}~dp ~~ + 2CC^{4}_{3}T^{4}+ C_{n} \left\{  \left( \frac{2n-1}{2}\right) \left( C_{3}^{2n}T^{2n}\right)   + \left( C_{3}^{2n}T^{2n}\right) + \left( 1+ C_{3}^{2}T^{2}\right)^{\frac{1}{3}} \right\} 
\end{align*}

It follow that 

\begin{align} \label{I-4,kintegral}
    I_{4,k} \leq -C'_{3} \int_{0}^{t }  \| u_n(p)\|^{2}_{D(\mathcal{A})}~dp ~~ + C'_{4}
\end{align}

Where $C'_{3}=  \frac{C_{2}C_{3}}{2}T $, $C'_{4}=2CC^{4}_{3}T^{4}+ C_{n} \left\{  \left( \frac{2n-1}{2}\right) \left( C_{3}^{2n}T^{2n}\right)   + \left( C_{3}^{2n}T^{2n}\right) + \left( 1+ C_{3}^{2}T^{2}\right)^{\frac{1}{3}} \right\} $

Now, apply the expectation on equation \eqref{Ito-lemma-on energy} to the both sides. Also using \eqref{I-1,kintegral}, \eqref{I-2,kintegral},\eqref{I-3,kintegral} and \eqref{I-4,kintegral}, it follows that

\begin{align*}
    \mathbb{E}\left(   \Psi \left(u_n(t )\right) -\Psi(u_n({0}))\right)& =  \mathbb{E}\left(\sum_{k=1}^{N} I_{1,k} + \sum_{k=1}^{N} I_{2,k}+ \sum_{k=1}^{N} I_{3,k} + I_{4}\right) \\
   \frac{1}{2}\|u_n\|^2_{\mathcal{V}} -\frac{1}{2}\|u_{n}(0)\|^2_{\mathcal{V}} &\leq   \sum_{k=1}^{N} C_2+ \sum_{k=1}^{N} C_3 -C'_{3} \mathbb{E}\left(\int_{0}^{t }  \| u_n(p)\|^{2}_{D(\mathcal{A})}~dp\right) ~~ + C'_{4}\\
   \mathbb{E}\left(\int_{0}^{t }  \| u_n(p)\|^{2}_{D(\mathcal{A})}~dp\right) &\leq \frac{1}{C'_3}\left(\sum_{k=1}^{N} C_2+ \sum_{k=1}^{N} C_3+C'_4+ \frac{1}{2}\|u_{n}(0)\|^2_{\mathcal{V}}-\frac{1}{2}C_{5}\right)
\end{align*}

Thus, by taking $K_{3}=\frac{1}{C'_3}\left(\sum_{k=1}^{N} C_2+ \sum_{k=1}^{N} C_3+C'_4+ \frac{1}{2}\|u_{n}(0)\|^2_{\mathcal{V}}-\frac{1}{2}C_{5}\right)$, we get

\begin{align}
    \mathbb{E}\left(\int_{0}^{t }  \| u_n(p)\|^{2}_{\mathcal{E}}~dp\right) \leq K_3
\end{align}
We are done with the proof.
\end{proof}

\begin{theorem}\label{tight thrm}
Suppose that $\{\mathcal{L}(u_n) : n \in \mathbb{N}\}$ is a set of measures, then this set is tight on $(\mathcal{X}_T, \mathcal{F})$. Where
$$\mathcal{X}_T = C([0, T], {L}^2;\mathcal{H}) \cup L^2_w([0, T]; D(\mathcal{A})) \cup L^2([0, T]; \mathcal{V}) \cup C([0, T], \mathcal{V}_w)$$
    
\end{theorem}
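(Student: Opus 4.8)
## Proof proposal

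The plan is to establish tightness on each of the four factor spaces separately and then invoke the fact that a product (here written as a union, but understood as the intersection-type space carrying the supremum of the four topologies) of tight families is tight. Concretely, a set of measures on $\mathcal{X}_T$ is tight provided its projections onto $C([0,T],\mathcal{H})$, $L^2_w([0,T];D(\mathcal{A}))$, $L^2([0,T];\mathcal{V})$ and $C([0,T],\mathcal{V}_w)$ are each tight; so it suffices to exhibit, for each $\epsilon_1>0$, a compact (or relatively compact) set in each of these spaces capturing mass $\geq 1-\epsilon_1$ uniformly in $n$.

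First I would handle the two ``easy'' factors using the a priori bounds of Theorem \ref{thm:4.1}. For $L^2_w([0,T];D(\mathcal{A}))$: balls in $L^2([0,T];D(\mathcal{A}))$ are weakly compact (reflexivity), and \eqref{eq:5.3} together with Chebyshev gives $\sup_n \mathbb{P}(\|u_n\|_{L^2([0,T];D(\mathcal{A}))}^2 > R) \leq K_3/R$, so a large ball does the job. For $L^2([0,T];\mathcal{V})$ one needs genuine compactness: I would use an Aubin--Lions--Simon type argument, writing $D(\mathcal{A})\embed\embed\mathcal{V}\embed\mathcal{H}$ (the first embedding compact by Rellich--Kondrachov since $\mathcal{O}\subset\mathbb{R}^2$ is bounded and smooth), and controlling a fractional time-derivative of $u_n$ from the equation \eqref{approx_final}: the drift $-\Delta^2 u_n+2\Delta u_n+F(u_n)$ is bounded in $L^2([0,T];\mathcal{V}^\ast)$ (using \eqref{eq:5.1}, \eqref{eq:5.2}, \eqref{eq:5.3} and the polynomial bound \eqref{4.17} on $F$), while the stochastic integral $\sum_k\int_0^\cdot B_k(u_n)\,dW_k$ is bounded in, say, $W^{\alpha,2}([0,T];\mathcal{H})$ for $\alpha<1/2$ by the Burkholder--Davis--Gundy inequality together with the bound $\|B_k(u_n)\|_\mathcal{H}\leq |f_k|_\mathcal{H}(1+|u_n|_\mathcal{H}^2)$, which is deterministic and bounded on $M$. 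Hence $u_n$ is bounded in $L^2([0,T];D(\mathcal{A}))\cap W^{\alpha,2}([0,T];\mathcal{H})$ in expectation, and this space embeds compactly into $L^2([0,T];\mathcal{V})$; Chebyshev again converts the moment bound into a uniform mass estimate on a compact set.

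For $C([0,T],\mathcal{H})$ and $C([0,T],\mathcal{V}_w)$ I would invoke the Aldous-condition machinery set up in the excerpt: by Proposition \ref{prop:equivalence_conditions}, condition \textbf{[A]} is equivalent to \textbf{[T]}, and \textbf{[T]} (via Theorem \ref{thm:sequence_condition} and Lemma \ref{lem:distribution_property}) is exactly the tightness statement in the path space. So it is enough to verify the Aldous condition for $(u_n)$: given stopping times $s_n\leq T$ and $0\leq a\leq\delta$, estimate $\mathbb{E}\,\rho(u_n(s_n+a),u_n(s_n))$ by splitting into the drift part and the martingale part of \eqref{apprx}. The drift contribution is bounded by $\mathbb{E}\int_{s_n}^{s_n+a}\|{-}\Delta^2u_n+2\Delta u_n+F(u_n)+\tfrac12\sum_k m_k(u_n)\|_{\mathcal{V}^\ast}\,dp$, which by Cauchy--Schwarz in time and the bounds above is $\leq C\delta^{1/2}$; the martingale contribution, by Itô isometry (or BDG), is $\leq (\mathbb{E}\int_{s_n}^{s_n+a}\sum_k\|B_k(u_n)\|_\mathcal{H}^2\,dp)^{1/2}\leq C\delta^{1/2}$. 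Then Chebyshev gives $\mathbb{P}(\rho(u_n(s_n+a),u_n(s_n))\geq\epsilon_2)\leq C\delta^{1/2}/\epsilon_2\to 0$, which is \textbf{[A]}. For the weak-topology space $C([0,T],\mathcal{V}_w)$ one runs the same computation testing against a fixed $v\in\mathcal{V}^\ast$ (so $\rho$ is replaced by $|\langle u_n(s_n+a)-u_n(s_n),v\rangle|$), combined with the uniform bound \eqref{eq:5.1} which gives equi-boundedness in $\mathcal{V}$ and hence, on bounded sets, metrizability of the weak topology; this is the standard criterion for tightness in $C([0,T],\mathcal{V}_w)$.

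The main obstacle is the $L^2([0,T];\mathcal{V})$ factor: one must produce honest time-regularity of $u_n$ to feed into a compactness theorem, and this requires care because the equation only holds weakly and the nonlinearity $F$ is superlinear. Getting the drift bounded in $L^2([0,T];\mathcal{V}^\ast)$ uniformly in $n$ leans on $\mathbb{E}\sup_p\|u_n(p)\|_\mathcal{V}^2\leq K_1$ and $\mathbb{E}\int_0^T\|u_n\|_\mathcal{E}^2\leq K_3$ plus the polynomial growth estimate \eqref{4.17} for $F$, and one has to check that the exponents actually close — in dimension two the Gagliardo--Nirenberg / Sobolev embeddings are exactly borderline, so this is where I would spend the most effort. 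Once the combined bound $\mathbb{E}\big(\|u_n\|_{L^2(D(\mathcal{A}))}^2+\|u_n\|_{W^{\alpha,2}(\mathcal{H})}^2\big)\leq C$ is in hand, the rest is the routine compact-embedding-plus-Chebyshev argument, and the four pieces assemble to tightness on $\mathcal{X}_T$.
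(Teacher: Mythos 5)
Your proposal is correct in substance, and its core step --- verifying the Aldous condition by splitting the increment $u_n(s_n+a)-u_n(s_n)$ into the drift pieces and the stochastic integral, estimating each via the moment bounds of Theorem \ref{thm:4.1} and the It\^o isometry, and finishing with Chebyshev --- is exactly what the paper does (the paper's decomposition into $\mathcal{J}^n_0,\dots,\mathcal{J}^n_4$ is your drift/martingale split). Where you diverge is in how tightness on the remaining factors of $\mathcal{X}_T$ is obtained: the paper verifies \emph{only} the Aldous condition and then leans entirely on the quoted Brze\'zniak-type criterion (the \textbf{[T]}$\Leftrightarrow$\textbf{[A]} machinery of Section~2), which silently packages the implication ``uniform bounds in $L^\infty(0,T;\mathcal{V})$ and $L^2(0,T;D(\mathcal{A}))$ plus \textbf{[A]} give tightness on the whole intersection space''; you instead re-derive the compactness factor by factor, in particular producing genuine strong compactness in $L^2([0,T];\mathcal{V})$ via a Flandoli--G\c{a}tarek / Aubin--Lions argument resting on a uniform $W^{\alpha,2}([0,T];\mathcal{H})$ bound for the stochastic integral. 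That extra work is unnecessary if one accepts the cited criterion, but it is self-contained and makes explicit where each a priori estimate enters, which the paper's proof does not. Two small points to tie up if you write this out in full: (i) tightness on each factor yields tightness on $\mathcal{X}_T$ (an intersection carrying the supremum topology, not a product) only via the standard lemma that an intersection of compact sets is compact in the intersection topology when all factors embed continuously into a common Hausdorff space, so that deserves a citation; (ii) for the $C([0,T];\mathcal{H})$ factor the Aldous condition supplies only equicontinuity, not pointwise relative compactness, so you must also invoke $\sup_n\mathbb{E}\bigl[\sup_p\|u_n(p)\|^2_{\mathcal{V}}\bigr]\le K_1$ together with the compactness of $\mathcal{V}\hookrightarrow\mathcal{H}$, exactly as you already do for $C([0,T];\mathcal{V}_w)$.
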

\begin{proof}

To prove that the given sequence $\{\Lambda(u_n) : n \in \mathbb{N}\}$ is tight on $(\mathcal{X}_T, \mathcal{F})$, it is sufficient for us to show that it obeys the Aldous conditions \textbf{[A]} \eqref{def:aldous_condition}. Let $(\tau_{n})_{n \in \mathbb{N}}$ be the sequence of stopping times such that $0 \leq \tau_{n} \leq T$. From the equation \eqref{apprx}, we have

\begin{align}
      u_n(t)&= u_n(0)+\int_0^t \left( -\Delta^{2}u_n(p)+2 \Delta u_n(p)\right) dp +\int_0^t F(u_n(p))~dp + \frac{1}{2}\sum_{k=1}^{N} \int_0^t m_{k}(u_n(p))~ dp + \sum_{k=1}^N \int_0^t B_k(u_n(p)) dW_k \notag\\
      &= \mathcal{J}^{n}_{0}+ \mathcal{J}^{n}_{1}+\mathcal{J}^{n}_{2}+ \frac{1}{2}\sum_{k=1}^{N}\mathcal{J}^{n}_{3}+\sum_{k=1}^{N}\mathcal{J}^{n}_{4}
\end{align}

 Let us estimate $\mathcal{J}^{n}_{i}$ for $i=0, 1,2,3,4$. Let $\lambda>0$. And $\mathcal{A}= \Delta^{2}-2 \Delta: \mathcal{E} \rightarrow\mathcal{H}$  is linear and continuous so applying Holder inequality together with \eqref{Au}  and \eqref{eq:5.3} we get

\begin{align}\label{5.2}
    \mathbb{E}\left(\left|\mathcal{J}^{n}_{1}(\tau_{n}+\lambda)-\mathcal{J}^{n}_{1}(\tau_{n})\right|_{\mathcal{H}}\right)&=  \mathbb{E}\left(\left|\int_{\tau_{n}}^{\tau_{n}+\lambda}\mathcal{A}u_n(p)~dp\right|_{\mathcal{H}}\right)\notag \leq \mathbb{E}\left(\int_{\tau_{n}}^{\tau_{n}+\lambda}\left|\mathcal{A}u_n(p)\right|_{\mathcal{H}}~dp\right)\notag \\
    &\leq C_6\mathbb{E}\left(\int_{\tau_{n}}^{\tau_{n}+\lambda}\left\|u_n(p)\right\|_{\mathcal{E}}~dp\right)\notag \leq C_6\lambda^{\frac{1}{2}}\left(\mathbb{E}\left(\int_{0}^{T}\left\|u_n(p)\right\|^{2}_{\mathcal{E}}~dp\right)\right)^{\frac{1}{2}}\notag \\
&\leq  C_6\lambda^{\frac{1}{2}}K_{3}^{\frac{1}{2}}
\end{align}

Consider $\mathcal{J}^{n}_{2}$

\begin{align}\label{EL2}
    \mathbb{E}\left(\left|\mathcal{J}^{n}_{2}(\tau_{n}+\lambda)-\mathcal{J}^{n}_{2}(\tau_{n})\right|_{\mathcal{H}}\right)&=  \mathbb{E}\left(\left|\int_{\tau_{n}}^{\tau_{n}+\lambda}F(u_n(p))~dp\right|_{\mathcal{H}}\right) \leq \mathbb{E}\left(\int_{\tau_{n}}^{\tau_{n}+\lambda}\left|F(u_n(p))\right|_{\mathcal{H}}~dp\right)
\end{align}

But, in \cite{Deterministic JS}, we have estimated $F(u)$ as
\begin{equation*}
    \|F(u_{1})-F(u_{2})\|_{\mathcal{H}} \leq \mathcal{G} ( \|u_{1}\|_{\mathcal{V}},\|u_{2}\|_{\mathcal{V}}) \|u_{1}-u_{2}\|_{\mathcal{V}}, \quad u_{1}, u_{2}\in \mathcal{V}
\end{equation*} 

where
\begin{align*}
    \mathcal{G} \left(\|u_1\|, \|u_2\|\right) &= 2C \left(\|u_1\|^{2} + \|u_2\|^{2} + \|u_1\|\|u_2\|\right)  + C_{n}  \begin{bmatrix}\left( \frac{2n - 1}{2} \right) \left( \|u_1\|^{2n - 1} + \|u_2\|^{n - 1} \right) (\|u_1\| + \|u_2\|)  \\+ \left( \|u_1\|^{2n} + \|u_2\|^{2n} \right) + \left( 1 + \|u_1\|^{2} + \|u_2\|^{2} \right)^{\frac{1}{3}} \end{bmatrix} 
\end{align*}

Substituting $u_{1}=u_{n}$ and $u_2=0$ we have

\begin{align}
    |F(u_{n})|_{\mathcal{H}} & \leq  \mathcal{G} \left(\|u_n\|, 0\right)\|u_n\|_{\mathcal{V}} \label{5.3}, \qquad \text{and} \\
     \mathcal{G} \left(\|u_n\|, 0\right) &\leq 2C \|u_n\|_{\mathcal{V}}^{2}   + C_{n}  \begin{bmatrix}\left( \frac{2n - 1}{2} \right) \|u_n\|_{\mathcal{V}}^{2n }    +  \|u_n\|_{\mathcal{V}}^{2n} + \left( 1 + \|u_n\|_{\mathcal{V}}^{2} \right)^{\frac{1}{3}} \end{bmatrix}  \|u_{n}\|_{\mathcal{V}}
    \end{align} 

From theorem \eqref{thm:4.1}, we can find a constant $C_7$ such that

\begin{align}\label{5.5}
    \mathcal{G} \left(\|u_n\|, 0\right)\leq C_7
\end{align}

Now, using \eqref{5.3} and \eqref{5.5}, we can reduce the inequality \eqref{EL2} as
\begin{align}\label{5.7}
    \mathbb{E}\left(\left|\mathcal{J}^{n}_{2}(\tau_{n}+\lambda)-\mathcal{J}^{n}_{2}(\tau_{n})\right|_{\mathcal{H}}\right) &\leq \mathbb{E}\left(\int_{\tau_{n}}^{\tau_{n}+\lambda}C_7\left\|u_n(p)\right\|_{\mathcal{V}}~dp\right) \leq \mathbb{E}\left(\int_{0}^{T}C_7\left\|u_n(p)\right\|_{\mathcal{V}}~dp\right)\notag \\
    &\leq \mathbb{E}\left[\left(\int_{0}^{T}C^{2}_7~dp\right)^{\frac{1}{2}}\left(\int_{0}^{T}\left\|u_n(p)\right\|^{2}_{\mathcal{V}}~dp\right)^{\frac{1}{2}}\right]\notag \\
    &\leq C^{}_7 T^{\frac{1}{2}} \left(\mathbb{E}\left(\int_{0}^{T}\left\|u_n(p)\right\|^{2}_{\mathcal{V}}~dp\right)\right)^{\frac{1}{2}} \leq C^{}_7 T^{\frac{1}{2}} K_{1}^{\frac{1}{2}}
\end{align}

Again, consider $\mathbb{E}(|{\mathcal{J}}^n_{4}(\tau_n + \lambda) - \mathcal{J}^n_3(\tau_n)|_{\mathcal{H}})$
\begin{align}\label{5.8}
\E\left(|\mathcal{J}^n_3(\tau_n + \lambda) - \mathcal{J}^n_3(\tau_n)|_{\mathcal{H}}\right) &= \E\left(\left|\int_{\tau_n}^{\tau_n+\lambda} m_k(u_n(p))dp\right|_{\mathcal{H}}\right) \notag \\
&\leq \E\left(\int_{\tau_n}^{\tau_n+\lambda} |m_k(u_n(p))|_{\mathcal{H}} dp\right) 
\end{align}

By referring \cite{Hussain_2005}, we have
\begin{align}\label{mk}
\|m_k(u_n) - m_k(v_n)\|_{\mathcal{V}} \leq C\|f_k\|^2_{\mathcal{V}} \left[2 + \|u_n\|_{\mathcal{V}}^2 + \|v_n\|_{\mathcal{V}}^2 + (\|u_n\|_{\mathcal{V}} + \|v_n\|_{\mathcal{V}})^2\right] \|u_n - v_n\|_{\mathcal{V}}
\end{align}

Inequality \eqref{mk} together $v_n = 0$ and Young's Inequality yields
\begin{align*}
\|m_k(u_n)\|_{\mathcal{V}} &\leq C\|f_k\|^2_{\mathcal{V}} \left(2 + \|u_n\|_{\mathcal{V}}^2 + \|u_n\|_{\mathcal{V}}^2\right) \|u_n\|_{\mathcal{V}} \\
&= C\|f_k\|^2_{\mathcal{V}} \left(2\|u_n\|_{\mathcal{V}} + 2\|u_n\|_{\mathcal{V}}^3\right) \\
&= 2C\|f_k\|^2_{\mathcal{V}} \|u_n\|_{\mathcal{V}} + 2C\|f_k\|^2_{\mathcal{V}} \|u_n\|_{\mathcal{V}}^3 \\
&\leq \frac{4C^2\|f_k\|_{\mathcal{V}}^4}{2} + \frac{\|u_n\|_{\mathcal{V}}^2}{2} + \frac{4C^2\|f_k\|_{\mathcal{V}}^4}{2} + \frac{(\|u_n\|_{\mathcal{V}}^2)^3}{2} \\
&\leq 4C^2\|f_k\|_{\mathcal{V}}^4 + \frac{\|u_n\|_{\mathcal{V}}^2}{2} + \frac{(\|u_n\|_{\mathcal{V}}^2)^3}{2},
\end{align*}
 Using Theorem \eqref{thm:4.1}, $\mathcal{V} \hookrightarrow \mathcal{H}$ and $\|f_k\|_{\mathcal{V}} < \infty$ imply that there is a constant $C_8$ such that

\begin{equation}\label{m_K}
|m_k(u)|_{\mathcal{H}} \leq C_{8}
\end{equation}

Using \eqref{m_K}, and \eqref{5.8}, we can infer that

\begin{equation}\label{5.11}
\E \left(|\mathcal{J}^n_3(\tau_n + \lambda) - \mathcal{J}^n_3(\tau_n)|_{\mathcal{H}}\right) \leq \lambda C_{8}
\end{equation}

Finally, Let us estimate  $\E\left(|\mathcal{J}^n_4(\tau_n + \lambda) - \mathcal{J}^4_n(\tau_n)|^2_{\mathcal{H}}\right)$. With the application of It\^o isometry, we can deduce that;
\begin{align}\label{5.12}
\E\left(|\mathcal{J}^n_4(\tau_n + \lambda) - \mathcal{J}^n_4(\tau_n)|^2_{\mathcal{H}}\right) &= \E\left(\left|\int_{\tau_n}^{\tau_n+\lambda} B_k(u_n(p))dW_k\right|_{\mathcal{H}}^2\right) \notag\\
&\leq \E\left(\int_{\tau_n}^{\tau_n+\lambda} |B_k(u_n(p))|_{\mathcal{H}}^2 dp\right)
\end{align}

However in \cite{Hussain_2005},
\begin{equation*}
\|B_k(u_n) - B_k(v_n)\|_{\mathcal{V}} \leq \|f_k\|_{\mathcal{V}}(\|u_n\|_{\mathcal{V}} + \|v_n\|_{\mathcal{V}})\|u_n - v_n\|_{\mathcal{V}}.
\end{equation*}
Setting $v = 0$ and using theorem \eqref{thm:4.1}, it follows that
\begin{align}\label{5.13}
\|B_k(u_n) - B_k(0)\|_{\mathcal{V}} &\leq \|f_k\|_{\mathcal{V}}\|u_n\|_{\mathcal{V}}^2 \notag \\
\|B_k(u_n) -f_k\|_{\mathcal{V}} &\leq \|f_k\|_{\mathcal{V}}\|u_n\|_{\mathcal{V}}^2 \leq \|f_k\|_{\mathcal{V}}C_1
\end{align}

But $\mathcal{V} \hookrightarrow \mathcal{H}$, so
\begin{align}\label{5.14}
|B_k(u_n)|_{\mathcal{H}} &\leq C \|B_k(u_n)\|_{\mathcal{V}}, \notag\\
&= C \|B_k(u_n) - f_k + f_k\|_{\mathcal{V}}, \notag\\
&\leq C \|B_k(u_n) - f_k\|_{\mathcal{V}} + C \|f_k\|_{\mathcal{V}}
\end{align}

Using inequalities \eqref{5.13} and \eqref{5.14} with $\|f_k\|_{\mathcal{V}} < \infty$ it follows that,
\begin{equation}\label{5.15}
|B_k(u_n)|_{\mathcal{H}} \leq CC_1\|f_k\|_{\mathcal{V}} + C\|f_k\|_{\mathcal{V}} \coloneqq C_{9}
\end{equation}

Merging \eqref{5.13} in \eqref{5.15} yields
\begin{equation}{\label{5.16}}
\E\left(|\mathcal{J}^n_4(\tau_n + \lambda) - \mathcal{J}^n_4(\tau_n)|^2_{\mathcal{H}}\right) \leq \E\left(\int_{\tau_n}^{\tau_n+\lambda} C_{9}^2 dp\right) = C_{9}^2 \lambda
\end{equation}

Fix $\kappa, \epsilon > 0$. Then using Chebyshev's inequality together with \eqref{5.2}, \eqref{5.7}, \eqref{5.11} and \eqref{5.16}, results in
\begin{equation*}
P\left(|\mathcal{J}^n_i(\tau_n + \lambda) - \mathcal{J}^n_i(\tau_n)|_{\mathcal{H}} \geq \kappa\right) \leq \frac{1}{\kappa} \E \left[|\mathcal{J}^n_i(\tau_n + \lambda) - \mathcal{J}^n_i(\tau_n)|_{\mathcal{H}}\right] \leq \frac{c_i \lambda}{\kappa},
\end{equation*}
with $n \in \N$, $i = 0,1, 2, 3$. Let $\delta_i = \frac{\kappa}{c_i} \epsilon$. Then
\begin{equation*}
\sup_{n \in \N} \sup_{0 \leq \lambda \leq \delta_i} P\left(|\mathcal{J}^n_i(\tau_n + \lambda) - \mathcal{J}^n_i(\tau_n)|_{\mathcal{H}} \geq \kappa\right) \leq \epsilon,
\end{equation*}

Again using Chebyshev's inequality, we get
\begin{equation*}
P\left(|\mathcal{J}^n_4(\tau_n + \lambda) - \mathcal{J}^n_4(\tau_n)|_{\mathcal{H}} \right) \leq \frac{1}{\kappa^2} \E \left[|\mathcal{J}^n_4(\tau_n + \lambda) - \mathcal{J}^n_4(\tau_n)|_{\mathcal{H}}^{2}\right] \leq \frac{c_4 \lambda}{\kappa^2}
\end{equation*}

Thus, choose $\delta_{4}= \frac{\kappa^2}{c_{4}}\epsilon$ and 
\begin{equation*}
\sup_{n \in \N} \sup_{0 \leq \lambda \leq \delta_4} P \left(|\mathcal{J}^n_4(\tau_n + \lambda) - \mathcal{J}^n_4(\tau_n)|_{\mathcal{H}} \geq \kappa\right) \leq \epsilon,
\end{equation*}

Since each of $\mathcal{J}^i_n$ satifies the Aldous condition, so we can say that it is also true for $u_n$. Hence, we are done with the  proof.
\end{proof}

\section{\textbf{\Large Convergence of Quadratic Variations}}

In this section, we are going to state some convergence results that are necessary to apply the martingale representation theorem. So, from theorem \eqref{tight thrm}  we have $(u_{n_{i}})_{i\in\NN}$ with probability measure space $(\hat{\Omega}, \FF, \hat{\PP})$ in a manner that we have $\mathcal{X}_T-$valued random variables $\widehat{u}$ and $\widehat{u}_{n_{i}}$, $i \geq 1$ with property that $\widehat{u}_{n_{i}}$ and $u_{n_{i}}$ have same distribution and $\widehat{u} _{n_{i}}$ converge to $\widehat{u}$ in $\mathcal{X}_T$ $\hat{\PP}$-a.s. That is,
\begin{equation}\label{same distribution}
\begin{cases}
 \widehat{u}_{n_{i}} \to \widehat{u} \quad \text{ in } \quad\CC([0, T],L^2), \\ \widehat{u}_{n_{i}} \to \widehat{u} \quad \text{ in } \quad L^2([0, T], \DD(\mathcal{A}))\\ \widehat{u}_{n_{i}} \to \widehat{u} \quad\text{ in }\quad  L^2([0, T], \VV), \\
 \widehat{u}_{n_{i}} \to \widehat{u} \quad \text{ in } \quad\CC([0, T], \VV_w)
    \end{cases}
\end{equation}
We denote $\widehat{u}_{n_{i}}$ again as $\widehat{u}_n$ because 
\begin{enumerate}
 \item $\widehat{u}_n \in $  $\CC([0, T], \H_n)$ $\PP$-a.s. \quad for each $n \in \NN$
    \item $\CC([0, T], \mathcal{H}_n) \subset \CC([0, T], \mathcal{H}) \cup L^2([0, T]; \VV)$ is Borel subset.
    \item $\widehat{u}_n$ and $u_n$ have same distribution.
\end{enumerate}
Therefore, for all $n \geq 1$,
\[
\LL(\widehat{u}_n)(\CC([0, T]; \mathcal{H}_n)) = 1, ~~\text{ i.e. } ~~\abs{\widehat{u}_n(t)}_{\mathcal{H}_n} = \abs{{u}_n(t)}_{\mathcal{H}_n}, ~~\PP\text{-a.s.}
\]

Using \eqref{same distribution} together with \eqref{manifold}, we get

\begin{align*}
    &\widehat{u}_{n} \to \widehat{u} ~~\text{in} ~~~\CC([0, T];L^2),\quad \text{and}\\
    &u_n(t) \in \MM
\end{align*}

Also, from the theorem \eqref{thm:4.1}, we have
\begin{align}
    &\sup_{1\leq n} \hat{\EE}\left( \sup_{p\in[0,T]} \norm{\widehat{u}_{n}(p)}_{\mathcal{V}}^2 \right) \leq K_1,\label{est1}  \\
    &\sup_{1\leq n} \hat{\EE}\left( \sup_{p\in[0,T]} \norm{\widehat{u}_{n}(p)}_{L^{2n}}^{2n} \right) \leq K_2,  \\
    &\sup_{1\leq n} \hat{\EE}\left( \int_0^T \norm{\widehat{u}_{n}(p)}_\mathcal{E}^2 dp \right) \leq K_3. \label{est3}
\end{align}
From \eqref{est3}, we can infer that   there exists a weakly convergent subsequence with the same label $\widehat{u}_{n}$ of $\widehat{u}_{n}$ in $L^2([0, T] \times \hat{\Omega}; \DD(\mathcal{A}))$. Also, by \eqref{same distribution} $\widehat{u}_{n} \to \widehat{u}$ in $\mathcal{X}_T$, $\hat{\PP}$-a.s, That is,
\begin{equation}\label{eq:7.5}
    \hat{\EE}\left( \int_0^T \norm{\widehat{u}(p)}_\mathcal{E}^2 dp \right) < \infty.
\end{equation}

Similarly, using \eqref{est1}, we have weak star convergent subsequence $\widehat{u}_{n}$ such that
\begin{equation}\label{eq:7.6}
    \hat{\EE}\left( \sup_{0\leq p\leq T} \norm{\widehat{u}(p)}^2_{\mathcal{V}} \right) < \infty.
\end{equation}

For all $n \geq 1$, we define a process $\widehat{M}_n$ having paths in $\CC([0, T]; \mathcal{H}_n)$, and in $\CC([0, T];L^2)$ by
\begin{equation}\label{eqM_n}
    \widehat{M}_n = \widehat{u}_{n}(t) - \widehat{u}_{n}(0) - \int_0^t \left(-\Delta^2\widehat{u}_n(p) + 2\Delta \widehat{u}_{n}(p)\right)dp - \int_0^t F(\widehat{u}_{n}(p))dp - \frac{1}{2} \sum_{k=1}^N \int_0^t m_k(\widehat{u}_{n}(p))dp - \sum_{k=1}^N \int_0^t B_k(\widehat{u}_{n}(p))dW_k.
\end{equation}

\begin{lemma}\label{lem:5.1}

$\widehat{M}_n$ is an square integrable martingale function with the quadratic variation $\ang{\ang{\widehat{M}_n}}_t$ defined as
\begin{align*}
    \ang{\ang{\widehat{M}_n}}_t = \int_0^t (d \widehat {M})^2 = \sum_{k=1}^N \int_0^t \abs{B_k(\widehat{u}_{n}(p))}^2 dp.
\end{align*}
\end{lemma}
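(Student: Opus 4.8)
\textbf{Proof plan for Lemma \ref{lem:5.1}.}

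The plan is to recognize that $\widehat M_n$ is, up to the change of probability space afforded by the Skorokhod/Jakubowski construction, exactly the stochastic integral part of the Galerkin equation \eqref{apprx}. Indeed, rearranging \eqref{apprx} we see that for the original process $u_n$ one has $u_n(t)-u_n(0)-\int_0^t(-\Delta^2 u_n+2\Delta u_n)\,dp-\int_0^t F(u_n)\,dp-\tfrac12\sum_k\int_0^t m_k(u_n)\,dp=\sum_k\int_0^t B_k(u_n)\,dW_k$, which is a genuine $\mathcal H$-valued (in fact $\mathcal H_n$-valued) Itô stochastic integral, hence a continuous square-integrable martingale. Since $\widehat u_n$ and $u_n$ have the same law on $\mathcal X_T$ (and the construction comes with a Wiener process on the new basis driving the same equation), the process $\widehat M_n$ defined by \eqref{eqM_n} has the same law as $\sum_k\int_0^t B_k(u_n)\,dW_k$; the martingale property and the form of the quadratic variation are preserved under this identification.

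The key steps, in order, are: (i) Observe that each term in \eqref{eqM_n} other than the stochastic integral is $\widehat{\mathbb F}$-adapted with continuous (indeed absolutely continuous) paths in $\mathcal H_n$, using the estimates of Theorem \ref{thm:4.1} together with \eqref{Au}, the bound \eqref{m_K} on $m_k$, and the bound \eqref{5.15} on $B_k$, so that all Bochner integrals are well defined and $\widehat M_n$ has paths in $\CC([0,T];\mathcal H_n)\subset\CC([0,T];L^2)$. (ii) Square-integrability: from \eqref{5.15} we have $|B_k(\widehat u_n(p))|_{\mathcal H}\le C_9$, so $\sum_k\int_0^T|B_k(\widehat u_n(p))|_{\mathcal H}^2\,dp\le N C_9^2 T<\infty$, hence the stochastic integral $\sum_k\int_0^t B_k(\widehat u_n)\,d\widehat W_k$ is a well-defined $L^2$-martingale with $\widehat{\mathbb E}\sup_{[0,T]}|\cdot|_{\mathcal H}^2<\infty$ by Doob's inequality and Itô isometry; the remaining terms, being adapted and of finite variation (bounded paths), contribute nothing to the martingale part. (iii) Identify $\widehat M_n$ with the stochastic integral: since $\widehat u_n$ solves the same Galerkin system \eqref{apprx} relative to $(\widehat\Omega,\widehat{\mathcal F},\widehat{\mathbb F},\widehat{\mathbb P})$ and the Wiener process $\widehat W$ obtained in the Skorokhod step (an equality of laws that transfers the a.s.\ identity \eqref{apprx} to the new space, cf.\ the argument that $\widehat u_n$ and $u_n$ have the same distribution), we get $\widehat M_n(t)=\sum_k\int_0^t B_k(\widehat u_n(p))\,d\widehat W_k(p)$ $\widehat{\mathbb P}$-a.s. (iv) Conclude: for an Itô integral of this form the quadratic variation is $\ang{\ang{\widehat M_n}}_t=\sum_{k=1}^N\int_0^t|B_k(\widehat u_n(p))|_{\mathcal H}^2\,dp$, using that the $W_k$ are independent one-dimensional Brownian motions so the cross terms vanish.

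The main obstacle is step (iii): one must justify carefully that the equation \eqref{apprx} — originally an almost-sure identity on the old basis $(\Omega,\mathcal F,\mathbb F,\mathbb P)$ — continues to hold on the new basis with the new driving noise $\widehat W$. This is not automatic from mere equality of laws of $\widehat u_n$; it requires the standard argument (going back to the treatment in \cite{Da Prato} and used in \cite{ref2}) that the functional $u\mapsto u(t)-u(0)-\int_0^t(\cdots)\,dp-\sum_k\int_0^t B_k(u)\,dW_k$ is a measurable function of the pair (solution, Wiener path) whose law is a Dirac mass at $0$, and that this property is retained after the Skorokhod transfer. Once this measurable-functional identification is in place, everything else is routine: the finite-variation terms are handled by the bounds already proved, and the martingale and quadratic-variation assertions are immediate properties of Hilbert-space Itô integrals. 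I would therefore present step (iii) in full detail and treat (i), (ii), (iv) briefly.
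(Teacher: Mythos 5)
Your overall reading of the lemma is right --- $\widehat M_n$ is meant to be the ``stochastic--integral part'' of the Galerkin equation (note that the displayed definition \eqref{eqM_n} contains the term $-\sum_k\int_0^t B_k(\widehat u_n)\,dW_k$, which would make $\widehat M_n\equiv 0$ if taken literally; you correctly work with the intended definition in which that term is absent). However, your route differs from the paper's in a way that exposes a genuine gap. The paper never identifies $\widehat M_n$ with an It\^o integral on the new space: its Skorokhod/Jakubowski step in Section 5 transfers only the processes $u_n$ (see \eqref{same distribution}), not the pair $(u_n,W)$, so at the stage of Lemma \ref{lem:5.1} there is \emph{no} Wiener process $\widehat W$ available on $(\hat\Omega,\hat{\mathcal F},\hat{\mathbb P})$. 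Instead, the paper proves the martingale property and the quadratic variation purely from equality of laws, via the two test--function identities \eqref{eq:7.8} and \eqref{eq:7.9}: these involve only the law of $\widehat u_n$ (the functionals $h(\widehat u_n|_{[0,t_2]})$ encode conditioning on the past), they hold for $u_n$ on the original space because there $M_n$ genuinely is the It\^o integral $\sum_k\int_0^t B_k(u_n)\,dW_k$, and they transfer verbatim because $u_n$ and $\widehat u_n$ have the same distribution. The driving Wiener process on the new basis is only recovered at the very end, in Theorem \ref{6.1}, by the martingale representation theorem \eqref{thm:martingale_representation} --- which is precisely why the quadratic-variation identification in this lemma is needed in the first place.

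Your step (iii) is therefore not executable in the paper's framework as it stands: the claim that ``the construction comes with a Wiener process on the new basis driving the same equation'' is an additional hypothesis, not something provided by \eqref{same distribution}. To make your approach work you would have to apply Jakubowski's theorem to the joint laws of $(u_n,W)$ and then argue that the almost-sure identity \eqref{apprx} survives the transfer; since the stochastic integral is not a continuous functional of the paths, the usual way to justify that survival is again a martingale/quadratic-variation characterization of the type \eqref{eq:7.8}--\eqref{eq:7.9}, so you would end up reproducing the paper's argument inside your step (iii). Your steps (i), (ii) and (iv) (continuity and adaptedness of the finite-variation terms from Theorem \ref{thm:4.1}, \eqref{Au}, \eqref{m_K}; square integrability from \eqref{5.15}; the formula $\sum_{k=1}^N\int_0^t|B_k(\widehat u_n(p))|_{\mathcal H}^2\,dp$ from independence of the $W_k$) are fine and in fact supply details that the paper's two-line proof omits. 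But the core of the proof should be the transfer of \eqref{eq:7.8}--\eqref{eq:7.9} by equality of laws, not the identification of $\widehat M_n$ with a stochastic integral against a not-yet-constructed $\widehat W$.
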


\begin{proof}

As we know that  $u_n$ and $\widehat{u}_{n}$ have same laws, so for each $t_1, t_2 \in [0, T]$, such that $t_1 \leq t_2$, for each $\mathrm{h} \in \CC([0, T], \mathcal{H})$, and for all $\eta_1, \eta_2 \in \mathcal{H}$, one obtains,
\begin{equation}\label{eq:7.8}
    \hat{\EE}\left[ \ang{\widehat{M}_n(t_1) - \widehat{M}_n(t_2), \eta_1}_\mathcal{H} \mathrm{h}(\widehat{u}_{n}|_{[0,t_2]}) \right] = 0
\end{equation}
and
\begin{multline}\label{eq:7.9}
    \hat{\EE}\left[ \left( \ang{\widehat{M}_n(t_1), \eta_1}_\mathcal{H} \ang{\widehat{M}_n(t_1), \eta_2}_\mathcal{H} - \ang{\widehat{M}_n(t_2), \eta_1}_\mathcal{H} \ang{\widehat{M}_n(t_2), \eta_2}_\mathcal{H} \right) \mathrm{h}(\widehat{u}_{n}|_{[0,t_2]}) \right] \\
    - \hat{\EE}\left[ \left( \sum_{k=1}^m \int_{t_1}^{t_2} \ang{B_k(\widehat{u}_{n}(\tau))\eta_1, B_k(\widehat{u}_{n}(\tau))\eta_2}_\mathcal{H} d\tau \right) \cdot \mathrm{h}(\widehat{u}_{n}|_{[0,t_2]}) \right] = 0.
\end{multline}
We are done with the proof
\end{proof}

\begin{lemma}\label{5.2}
The process defined as
\begin{align*}
  \widehat{M}_n &= \widehat{u}_{n}(t) - \widehat{u}_{n}(0) - \int_0^t \left(-\Delta^2\widehat{u}_n(p) + 2\Delta \widehat{u}_{n}(p)\right)dp - \int_0^t F(\widehat{u}_{n}(p))dp \\&- \frac{1}{2} \sum_{k=1}^N \int_0^t m_k(\widehat{u}_{n}(p))dp - \sum_{k=1}^N \int_0^t B_k(\widehat{u}_{n}(p))dW_k.
\end{align*}
is $\mathcal{H}-$ valued continuous process.
\end{lemma}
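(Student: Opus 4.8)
The plan is to check that each of the six summands appearing in the definition of $\widehat{M}_n$ is, on its own, a $\hat{\mathbb{P}}$-a.s.\ continuous process with values in $\mathcal{H}$; since a finite sum of $\mathcal{H}$-valued continuous processes is again $\mathcal{H}$-valued continuous, this yields the claim (and in fact upgrades it to continuity of paths in $C([0,T];\mathcal{H}_n)$, as already asserted in \eqref{eqM_n}). The structural fact used throughout is that $\widehat{u}_n$ has the same law as $u_n$, so by the remarks preceding \eqref{eqM_n} it has $\hat{\mathbb{P}}$-a.s.\ paths in $C([0,T];\mathcal{H}_n)$; because $\mathcal{H}_n$ is finite dimensional all its norms are equivalent, hence $p\mapsto\widehat{u}_n(p)$ is strongly continuous as an $\mathcal{H}$-valued map, and so is any continuous (polynomial-type) function of it.

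For the non-integral part, $t\mapsto \widehat{u}_n(t)-\widehat{u}_n(0)$ is continuous in $\mathcal{H}$ by the observation above, $\widehat{u}_n(0)$ being a fixed element of $\mathcal{H}_n$. For the drift integrals, note that $\mathcal{A}=\Delta^2-2\Delta$ is bounded from $\mathcal{E}$ to $\mathcal{H}$ and restricts to a bounded operator on $\mathcal{H}_n$, so $p\mapsto-\mathcal{A}\widehat{u}_n(p)$ is continuous, hence locally bounded, hence Bochner integrable on $[0,T]$, and the fundamental theorem of calculus for the Bochner integral makes $t\mapsto\int_0^t(-\Delta^2\widehat{u}_n(p)+2\Delta\widehat{u}_n(p))\,dp$ continuously differentiable with values in $\mathcal{H}$. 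The same reasoning applies verbatim to $p\mapsto F(\widehat{u}_n(p))$ and to $p\mapsto m_k(\widehat{u}_n(p))$, which are continuous compositions; the pointwise bounds $|F(\widehat{u}_n(p))|_{\mathcal{H}}\le \mathcal{G}(\|\widehat{u}_n(p)\|_{\mathcal{V}},0)\,\|\widehat{u}_n(p)\|_{\mathcal{V}}$ from \eqref{5.3} and $|m_k(\widehat{u}_n(p))|_{\mathcal{H}}\le C_8$ from \eqref{m_K}, combined with \eqref{est1}, confirm integrability. Thus $t\mapsto\int_0^t F(\widehat{u}_n(p))\,dp$ and $t\mapsto\int_0^t m_k(\widehat{u}_n(p))\,dp$ are continuous in $\mathcal{H}$.

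For the stochastic term, the integrand $p\mapsto B_k(\widehat{u}_n(p))$ is adapted to the relevant filtration, continuous in $\mathcal{H}$, and bounded by the constant $C_9$ of \eqref{5.15}, hence it lies in $L^2(\hat{\Omega}\times[0,T];\mathcal{H})$; by the standard construction of the It\^o integral for Hilbert-space-valued integrands (see \cite{Da Prato}), $t\mapsto\sum_{k=1}^N\int_0^t B_k(\widehat{u}_n(p))\,dW_k$ has a $\hat{\mathbb{P}}$-a.s.\ continuous $\mathcal{H}$-valued modification — this is precisely the continuous, square-integrable, $\mathcal{H}$-valued martingale of Lemma~\ref{lem:5.1}. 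Adding the five continuous processes above to this one shows that $\widehat{M}_n$ has $\hat{\mathbb{P}}$-a.s.\ continuous paths in $\mathcal{H}$, and together with Lemma~\ref{lem:5.1} this places $\widehat{M}_n$ in $\mathcal{M}^2_T(\mathcal{H})$, as needed for the martingale representation argument.

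The only point deserving genuine attention — and the one I would flag as the main, if mild, obstacle — is the transfer of the path-regularity property from $u_n$ to $\widehat{u}_n$: one must know that ``$\widehat{u}_n$ has paths in $C([0,T];\mathcal{H}_n)$'' survives the Skorokhod change of probability space. This is legitimate because $C([0,T];\mathcal{H}_n)$ is a Borel (indeed closed) subset of $\mathcal{X}_T$ and $\widehat{u}_n$ has the same distribution as $u_n$, so $\hat{\mathbb{P}}(\widehat{u}_n\in C([0,T];\mathcal{H}_n))=\mathbb{P}(u_n\in C([0,T];\mathcal{H}_n))=1$; once this is in place, everything else is a routine combination of finite-dimensional continuity, the Bochner fundamental theorem of calculus, and the elementary path-continuity of the It\^o integral.
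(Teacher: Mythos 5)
Your proof is correct, and it takes a genuinely different (and in fact more direct) route than the paper's. The paper does not argue pathwise continuity term by term; instead it bounds the \emph{expectations} $\hat{\mathbb{E}}\bigl(\int_0^t |\mathcal{A}\widehat{u}(p)|_{L^2}\,dp\bigr)$, $\hat{\mathbb{E}}\bigl(\int_0^t |F(\widehat{u}(p))|_{L^2}\,dp\bigr)$ and $\hat{\mathbb{E}}\bigl(\int_0^t |m_k(\widehat{u}(p))|\,dp\bigr)$ using \eqref{eq:5.3}, \eqref{eq:5.1} and \eqref{m_K}, notes that the stochastic integral has mean zero, and concludes that the sum of integrals is finite in $L^2$; continuity of the indefinite integrals of a.s.\ Bochner-integrable integrands is left implicit. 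Your argument instead exploits that $\widehat{u}_n$ lives in the finite-dimensional space $\mathcal{H}_n$ (a fact the paper records just before \eqref{eqM_n}, including the Borel-subset transfer of the law that you rightly flag as the one delicate point), so that each integrand is a continuous $\mathcal{H}$-valued path and the fundamental theorem of calculus for Bochner integrals plus the standard continuity of the It\^o integral give continuity of each summand directly. What your route buys is an actual proof of the continuity asserted in the lemma statement, rather than only integrability estimates; what the paper's route buys is the quantitative moment bounds that are reused later (e.g.\ in the uniform-integrability arguments of Lemmas \ref{5.6} and \ref{5.7}). One cosmetic remark: the continuous martingale produced by the stochastic integral is not literally ``the martingale of Lemma~\ref{lem:5.1}'' (that lemma concerns $\widehat{M}_n$ itself as defined in \eqref{eqM_n}), but this does not affect the validity of your argument.
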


\begin{proof}
By considering  $\widehat{u} \in \CC([0, T], \VV)$, it is enough to construct the following inequality
\begin{align}
    \left\| - \int_0^t \left(-\Delta^2\widehat{u}_n(p) + 2\Delta \widehat{u}_{n}(p)\right)dp - \int_0^t F(\widehat{u}_{n}(p))dp - \frac{1}{2} \sum_{k=1}^N \int_0^t m_k(\widehat{u}_{n}(p))dp - \sum_{k=1}^N \int_0^t B_k(\widehat{u}_{n}(p))dW_k \right\|_{L^2} < \infty.
\end{align}

To prove this lemma, we will take each term separately.

Consider  $\int_0^t \left(-\Delta^2\widehat{u}_n(p) + 2\Delta \widehat{u}_{n}(p)\right)dp= \int_0^t \mathcal{A}{u}_{n}(p)dp$.  Using Cauchy-Schwartz inequality together $\mathcal{V} \hookrightarrow \mathcal{H}$ and \eqref{Au} we have

\begin{align}
\hat{\EE}\left(\int_0^t \abs{\mathcal{A} \widehat{u}(p)}_{L^2} dp\right) &\leq \hat{\EE}\left(\int_0^T \abs{\mathcal{A} \widehat{u}(p)}_{L^2} dp\right)\notag \\
&\leq T^{\frac{1}{2}} \left(\hat{\EE}\left(\int_0^T \abs{\mathcal{A} \widehat{u}(p)}^2_{L^2} dp\right)\right)^{\frac{1}{2}} \notag \leq C^{\frac{1}{2}}T^{\frac{1}{2}}\left( \hat{\EE}\left(\int_0^T \norm{\widehat{u}(p)}^2_{\mathcal{E}} dp\right)\right)^{\frac{1}{2}}
\end{align}

Using \eqref{eq:5.3}, we have

\begin{equation}\label{eq:7.10}
\hat{\EE}\left(\int_0^t \abs{\mathcal{A} \widehat{u}(p)}_{L^2} dp\right) < \infty.
\end{equation}
Also,
\begin{align*}
    \hat{\EE}\left(\int_0^t \abs{F(\widehat{u}(p))}_{L^2} dp\right) &\leq \hat{\EE}\left(\int_0^T \mathcal{G}(\norm{\widehat{u}(p)}_\mathcal{V}, 0)\norm{\widehat{u}(p)}_\mathcal{V} dp\right) \notag \\
    &\leq \hat{\EE}\left(\int_0^T C_{7}\norm{\widehat{u}(p)}_\mathcal{V} dp\right) 
    \leq C_{7}T^{\frac{1}{2}} \hat{\EE}\left(\int_0^T \norm{\widehat{u}(p)}_\mathcal{V}^2 dp\right)^{\frac{1}{2}}.
\end{align*}
Using \eqref{eq:5.1}, it follows that
\begin{equation}\label{eq:7.11}
\hat{\EE}\left(\int_0^t \abs{F(\widehat{u}(p))}_{L^2} dp\right) < \infty.
\end{equation}

Next, using \eqref{m_K}, we can infer that

\begin{align*}
    \hat{\EE}\left(\int_0^t \abs{m_k(\widehat{u}(p)))} dp\right) \leq \hat{\EE}\left(\int_0^T C_{8} dp\right) = C_{8}T < \infty.
\end{align*}

\begin{equation}\label{eq:7.12}
\hat{\EE}\left(\int_0^t \abs{m_k(\widehat{u}(p)))} dp\right) < \infty.
\end{equation}

Finally, because $\int_0^t B_k(\widehat{u}(p))dW_k$ is stochastic integral so
\begin{equation}\label{eq:7.13}
\hat{\EE}\left(\int_0^t B_k(\widehat{u}(p))dW_k\right) = 0.
\end{equation}

Combining \eqref{eq:7.10}, \eqref{eq:7.11}, \eqref{eq:7.12} and \eqref{eq:7.13} we have
\begin{align*}
    \left\| - \int_0^t \left(-\Delta^2\widehat{u}_n(p) + 2\Delta \widehat{u}_{n}(p)\right)dp - \int_0^t F(\widehat{u}_{n}(p))dp - \frac{1}{2} \sum_{k=1}^N \int_0^t m_k(\widehat{u}_{n}(p))dp - \sum_{k=1}^N \int_0^t B_k(\widehat{u}_{n}(p))dW_k. \right\|_{L^2} < \infty.
\end{align*}
We are done with the proof.
\end{proof}

\begin{theorem}\label{5.3}
    For $t_1, t_2 \in [0, T]$ with $t_1 \leq t_2$ and $\eta \in H$, the following are true:
    \begin{align*}
       &1.~~ \lim\limits_{n\to\infty} \ang{\widehat{u}_n(t), \mathcal{Z}_n(\eta)}_\mathcal{H} = \ang{\widehat{u}(t), \eta}_\mathcal{H}, ~~\hat{\PP}-a.s.\\
    &2.~~\lim\limits_{n\to\infty} \int_{t_1}^{t_2} \ang{\mathcal{A}\widehat{u}_n(p), \mathcal{Z}_n(\eta)}_\mathcal{H} dp = \int_{t_1}^{t_2} \ang{\mathcal{A}\widehat{u}(p), \eta}_\mathcal{H} dp,~~ \hat{\PP}-a.s.\\
    &3.~~\lim\limits_{n\to\infty} \int_{t_1}^{t_2} \ang{F(\widehat{u}_n(p)), \mathcal{Z}_n(\eta)}_\mathcal{H} dp = \int_{t_1}^{t_2} \ang{F(\widehat{u}(p)), \mathcal{Z}(\eta)}_\mathcal{H} dp, ~~\hat{\PP}-a.s. ~~for~~ \eta \in H.\\
 &4.~~\lim\limits_{n\to\infty} \int_{t_1}^{t_2} \ang{m_k(\widehat{u}_n(p)), \mathcal{Z}_n(\eta)}_\mathcal{H} dp = \int_{t_1}^{t_2} \ang{m_k(\widehat{u}(p)), \mathcal{Z}(\eta)}_\mathcal{H} dp, ~~\hat{\PP}-a.s.\\
 &5.~~\lim\limits_{n\to\infty} \int_{t_1}^{t_2} \ang{B_k(\widehat{u}_n(p)), \mathcal{Z}_n(\eta)}_\mathcal{H} dW_k(p) = \int_{t_1}^{t_2} \ang{B_k(\widehat{u}(p)), \eta}_\mathcal{H} dW_k(p), ~~\hat{\PP}-a.s.
\end{align*}
\end{theorem}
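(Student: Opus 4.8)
The plan is to prove the five limits one at a time, exploiting two facts shared by all of them. First, since $\{e_j\}$ is an orthonormal basis of $\mathcal{H}$, the orthogonal projections satisfy $\mathcal{Z}_n(\eta)\to\eta$ strongly in $\mathcal{H}$ for every $\eta\in\mathcal{H}$, with $|\mathcal{Z}_n(\eta)|_{\mathcal{H}}\le|\eta|_{\mathcal{H}}$. Second, from the $\hat{\PP}$-a.s.\ convergence $\widehat{u}_n\to\widehat{u}$ in $\CC([0,T],\VV_w)$ and the Banach--Steinhaus theorem applied pathwise, the set $\{\widehat{u}_n(\omega)(p):n\in\NN,\ p\in[0,T]\}$ is bounded in $\VV$ for $\hat{\PP}$-a.e.\ $\omega$; write $R(\omega)<\infty$ for that bound. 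From now on $\omega$ is fixed with this property, and the data used below are the strong convergences $\widehat{u}_n\to\widehat{u}$ in $\CC([0,T],L^2)$ and in $L^2([0,T],\VV)$, together with the weak convergence $\widehat{u}_n\rightharpoonup\widehat{u}$ in $L^2([0,T],\DD(\mathcal{A}))$, all contained in the convergence in $\mathcal{X}_T$.

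For (1), I would write
\[
\ang{\widehat{u}_n(t),\mathcal{Z}_n(\eta)}_{\mathcal{H}}-\ang{\widehat{u}(t),\eta}_{\mathcal{H}}
=\ang{\widehat{u}_n(t),\mathcal{Z}_n(\eta)-\eta}_{\mathcal{H}}+\ang{\widehat{u}_n(t)-\widehat{u}(t),\eta}_{\mathcal{H}},
\]
bounding the first summand by $|\widehat{u}_n(t)|_{\mathcal{H}}\,|\mathcal{Z}_n(\eta)-\eta|_{\mathcal{H}}=|\mathcal{Z}_n(\eta)-\eta|_{\mathcal{H}}$ (using $|\widehat{u}_n(t)|_{\mathcal{H}}=1$ from Theorem \ref{manifold}) and the second by $\sup_{p\in[0,T]}|\widehat{u}_n(p)-\widehat{u}(p)|_{L^2}\,|\eta|_{\mathcal{H}}$; both tend to $0$. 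For (2), since each $\mathcal{H}_n$ is spanned by eigenvectors of $\mathcal{A}$ we have $\mathcal{A}\widehat{u}_n(p)\in\mathcal{H}_n$, hence $\ang{\mathcal{A}\widehat{u}_n(p),\mathcal{Z}_n(\eta)}_{\mathcal{H}}=\ang{\mathcal{A}\widehat{u}_n(p),\eta}_{\mathcal{H}}$; the map $v\mapsto\int_{t_1}^{t_2}\ang{\mathcal{A}v(p),\eta}_{\mathcal{H}}\,dp$ is a continuous linear functional on $L^2([0,T],\DD(\mathcal{A}))$ (because $\mathcal{A}\colon\DD(\mathcal{A})\to\mathcal{H}$ is bounded and $\mathbf{1}_{[t_1,t_2]}\eta\in L^2([0,T],\mathcal{H})$), so the weak convergence in $L^2([0,T],\DD(\mathcal{A}))$ yields the limit.

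For (3) and (4) the scheme is the same, now using the local Lipschitz estimates: $\|F(\widehat{u}_n(p))-F(\widehat{u}(p))\|_{\mathcal{H}}\le\mathcal{G}\!\left(R(\omega),R(\omega)\right)\|\widehat{u}_n(p)-\widehat{u}(p)\|_{\VV}$ and the analogue \eqref{mk} for $m_k$. I would split $\int_{t_1}^{t_2}\ang{F(\widehat{u}_n(p)),\mathcal{Z}_n(\eta)}_{\mathcal{H}}\,dp-\int_{t_1}^{t_2}\ang{F(\widehat{u}(p)),\eta}_{\mathcal{H}}\,dp$ into a difference-of-arguments part, bounded by $|\eta|_{\mathcal{H}}\,T^{1/2}\mathcal{G}(R(\omega),R(\omega))\big(\int_0^T\|\widehat{u}_n(p)-\widehat{u}(p)\|_{\VV}^2\,dp\big)^{1/2}\to0$, and a part $|\mathcal{Z}_n(\eta)-\eta|_{\mathcal{H}}\int_0^T\|F(\widehat{u}(p))\|_{\mathcal{H}}\,dp\to0$, the last integral being finite by Lemma \ref{5.2} (cf.\ \eqref{eq:7.11}); the identical argument with \eqref{eq:7.12} gives (4).

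The main obstacle is (5), since a.s.\ convergence of the integrand does not pass through the It\^o integral. I would first show $B_k(\widehat{u}_n)\to B_k(\widehat{u})$ in $L^2([0,T],\mathcal{H})$ in probability: the estimate \eqref{5.13} gives $|B_k(\widehat{u}_n(p))-B_k(\widehat{u}(p))|_{\mathcal{H}}\le C\|f_k\|_{\VV}\big(\|\widehat{u}_n(p)\|_{\VV}+\|\widehat{u}(p)\|_{\VV}\big)\|\widehat{u}_n(p)-\widehat{u}(p)\|_{\VV}$, so the pathwise bound $R(\omega)$ and $\widehat{u}_n\to\widehat{u}$ in $L^2([0,T],\VV)$ a.s.\ make $\int_0^T|B_k(\widehat{u}_n(p))-B_k(\widehat{u}(p))|_{\mathcal{H}}^2\,dp\to0$ a.s., hence in probability. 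The classical continuity of the stochastic integral with respect to convergence in probability of its integrand (see \cite{Da Prato}) then gives $\int_{t_1}^{t_2}\ang{B_k(\widehat{u}_n(p)),\eta}_{\mathcal{H}}\,dW_k\to\int_{t_1}^{t_2}\ang{B_k(\widehat{u}(p)),\eta}_{\mathcal{H}}\,dW_k$ in probability; the leftover $\int_{t_1}^{t_2}\ang{B_k(\widehat{u}_n(p)),\mathcal{Z}_n(\eta)-\eta}_{\mathcal{H}}\,dW_k$ tends to $0$ in $L^2(\hat{\Omega})$ by the It\^o isometry, $|\mathcal{Z}_n(\eta)-\eta|_{\mathcal{H}}\to0$, and the uniform bound \eqref{5.15}. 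Passing to a further subsequence (relabelled) upgrades convergence in probability to $\hat{\PP}$-a.s., which is (5). The only genuinely delicate step is this stochastic-integral passage to the limit; everything else is bookkeeping resting on the pathwise bound $R(\omega)$ and the energy estimates of Theorem \ref{thm:4.1}.
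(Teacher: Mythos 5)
Your proposal is correct, and for parts (1), (3) and (4) it follows essentially the same decomposition as the paper: split each difference into a ``difference of arguments'' term and a ``$\mathcal{Z}_n(\eta)-\eta$'' term, kill the first with the Lipschitz estimates for $F$ and $m_k$ together with the strong convergence in $C([0,T],\mathcal{H})$ and $L^2([0,T],\mathcal{V})$, and the second with $\mathcal{Z}_n(\eta)\to\eta$ plus a uniform bound on the integrand. Your pathwise bound $R(\omega)$ obtained from Banach--Steinhaus on $C([0,T],\mathcal{V}_w)$ is in fact a cleaner justification of the uniform constants than the paper's appeal to the expectation bounds of Theorem \ref{thm:4.1}. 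Where you genuinely diverge is in (2) and (5). For (2) you observe that $\mathcal{A}\widehat{u}_n(p)\in\mathcal{H}_n$ (since $\mathcal{H}_n$ is spanned by eigenvectors of $\mathcal{A}$), so the projection can be moved off $\eta$ for free and the whole statement reduces to testing the weak convergence in $L^2([0,T],D(\mathcal{A}))$ against the single bounded functional $v\mapsto\int_{t_1}^{t_2}\langle\mathcal{A}v(p),\eta\rangle\,dp$; the paper instead keeps both terms, invokes the Riesz representation to rewrite $\langle\mathcal{A}u,\eta\rangle$ as $\langle u,\phi_\eta\rangle_{D(\mathcal{A})}$, and controls the residual term by Cauchy--Schwarz and the uniform $L^2([0,T],D(\mathcal{A}))$ bound. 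Your route is shorter and avoids the auxiliary $\phi_\eta$. For (5) the paper estimates $\hat{\mathbb{E}}(\mathcal{I}_1^2)$ and $\hat{\mathbb{E}}(\mathcal{I}_2^2)$ directly via the It\^o isometry and passes to the limit inside the expectation, then asserts the almost sure statement; you instead establish $B_k(\widehat{u}_n)\to B_k(\widehat{u})$ in $L^2([0,T],\mathcal{H})$ pathwise, invoke the continuity of the stochastic integral under convergence in probability of the integrand, handle the $\mathcal{Z}_n(\eta)-\eta$ term by isometry, and explicitly extract a subsequence to upgrade convergence in probability to $\hat{\mathbb{P}}$-a.s.\ convergence. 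That last step is a point the paper glosses over (mean-square or in-probability convergence does not by itself give the a.s.\ convergence claimed in the statement), so your version is the more careful of the two; the only cost is the relabelling of a further subsequence, which is harmless in this Galerkin scheme.
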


\begin{proof}
    Using \eqref{same distribution}, we can say that $\widehat{u} _{n}$ converge to $\widehat{u}$ in $\mathcal{X}_T$ $\hat{\PP}$-a.s. \\
    But $\widehat{u} _{n}$ converge to $\widehat{u}$ in $C\left([0,T],\mathcal{H}\right)$ $\hat{\PP}$-a.s. and $\mathcal{Z}_{n}(\eta _{})$ converge to $\eta$ in $\mathcal{H}$ $\hat{\PP}$-a.s. Therefore, we have

\begin{align*}
\lim_{n\to\infty} \inner{\widehat{u}_n(t)}{\mathcal{Z}_n(\eta)} - \inner{\widehat{u}(t)}{\eta}_\mathcal{H} 
&= \lim_{n\to\infty} \inner{\widehat{u}_n(t) - \widehat{u}(t)}{\mathcal{Z}_n(\eta)}_\mathcal{H} + \lim_{n\to\infty} \inner{\widehat{u}(t)}{\mathcal{Z}_n(\eta) - \eta}_\mathcal{H} = 0, \quad \Phat\text{-a.s.}
\end{align*}
This proves (1).\\
Now,
\begin{align*}
\int_{t_1}^{t_2} \inner{\A(\widehat{u}_n(p))}{\Z_n(\eta)}_\H \,dp &- \int_{t_1}^{t_2} \inner{\A(\widehat{u}(p))}{\eta}_\H \,dp \\
&= \int_{t_1}^{t_2} \inner{\A(\widehat{u}_n(p)) - \A(\widehat{u}(p)))}{\eta}_\H \,dp + \int_{t_1}^{t_2} \inner{\A(\widehat{u}_n(p))}{\Z_n(\eta) - \eta}_\H \,dp \\
&= \int_{t_1}^{t_2} \inner{\A(\widehat{u}_n(p) - \widehat{u}(p)))}{\eta}_\H \,dp + \int_{t_1}^{t_2} \inner{\A(\widehat{u}_n(p))}{\Z_n(\eta) - \eta}_\H \,dp \\
\end{align*}
For fixed $\eta \in \H= L^2(\mathcal{O})$, the Riesz representation theorem \cite{Rudin} guarantees the existence of a unique $\phi_\eta \in D(\A)$ such that:
\[
\langle Au, \eta \rangle_{\H} = \langle u, \phi_\eta \rangle_{D(A)} \quad \forall u \in D(\A),
\]
Therefore,
\begin{align*}
\int_{t_1}^{t_2} \inner{\A(\widehat{u}_n(p))}{\Z_n(\eta)}_\H \,dp &- \int_{t_1}^{t_2} \inner{\A(\widehat{u}(p))}{\eta}_\H \,dp \\
    &\leq \int_{t_1}^{t_2} \inner{\widehat{u}_n(p) - \widehat{u}(p)}{\phi_\eta}_{D(\A)} \,dp + \int_{t_1}^{t_2} |\A(\widehat{u}_n(p))|_\H |\Z_n(\eta) - \eta|_\H \,dp \\
&\leq \int_{t_1}^{t_2} \inner{\widehat{u}_n(p) - \widehat{u}(p)}{\phi_\eta}_{D(\A)} \,dp + \int_0^T |\widehat{u}_n(p)|_{D(\A)} |\Z_n(\eta) - \eta|_\H \,dp \\
&\leq \int_{t_1}^{t_2} \inner{\widehat{u}_n(p) - \widehat{u}(p)}{\phi_\eta}_{D(\A)} \,dp + \left(\int_0^T |\widehat{u}_n(p)|^2_{D(\A)} \,dp\right)^{\frac{1}{2}} \left(\int_0^T |\Z_n(\eta) - \eta|^2_\H \,dp\right)^{\frac{1}{2}} \\
&= \int_{t_1}^{t_2} \inner{\widehat{u}_n(p) - \widehat{u}(p)}{\phi_\eta}_{D(\A)} \,dp + \left(\int_0^T |\widehat{u}_n(p)|^2_{D(\A)} \,dp\right)^{\frac{1}{2}} \left(|\Z_n(\eta) - \eta|^2_\H \int_0^T dp\right)^{\frac{1}{2}} \\
&= \int_{t_1}^{t_2} \inner{\widehat{u}_n(p) - \widehat{u}(p)}{\phi_\eta}_{D(\A)} \,dp + \|\widehat{u}_n\|_{L^2([0,T],D(\A))} |\Z_n(\eta) - \eta|_\H T^{\frac{1}{2}} \\
&= \int_{t_1}^{t_2} \inner{\widehat{u}_n(p) - \widehat{u}(p)}{\phi_\eta}_{D(\A)} \,dp + \|\widehat{u}_n\|_{L^2([0,T],D(\A))} |\Z_n(\eta) - \eta|_\H T^{\frac{1}{2}}
\end{align*}

As $\widehat{u}_n$, $\Phat$-a.s. converges weakly to limit $\widehat{u}$ in $L^2([0,T], \D(\A))$, also $\widehat{u}_n$ is uniformly bounded in $L^2([0,T], D(\A))$ and $\Z_n(\eta)$ converges to $\eta$ in $\H$-norm. It follows that
\[
\lim_{n\to\infty} \int_{t_1}^{t_2} \inner{\widehat{u}_n(p) - \widehat{u}(p)}{\phi_\eta}_{D(\A)} \,dp = 0,
\]
and
\[
\lim_{n\to\infty} |\Z_n(\eta) - \eta|_\H = 0,
\]
Therefore,
\begin{align*}
    \lim_{n\to\infty} \int_{t_1}^{t_2} \inner{\A(\widehat{u}_n(p))}{\Z_n(\eta)}_\H \,dp = \int_{t_1}^{t_2} \inner{\A(\widehat{u}(p))}{\eta}_\H \,dp,
\end{align*}
So (2) is proved.

Proving (3), let $\eta \in \V$
\begin{align*}
\int_{t_1}^{t_2} \inner{F(\widehat{u}_n(p))}{\Z_n(\eta)}_\H \,dp &- \int_{t_1}^{t_2} \inner{F(\widehat{u}(p))}{\eta}_\H \,dp \\
&= \int_{t_1}^{t_2} \inner{F(\widehat{u}_n(p)) - F(\widehat{u}(p)))}{\eta}_\H \,dp + \int_{t_1}^{t_2} \inner{F(\widehat{u}_n(p))}{\Z_n(\eta) - \eta}_\H \,dp \\
&\leq \int_0^T |F(\widehat{u}_n(p)) - F(\widehat{u}(p)))|_\H |\eta|_\H \,dp + \int_0^T |F(\widehat{u}_n(p))|_\H |\Z_n(\eta) - \eta|_\H \,dp.
\end{align*}

Using Lipschitz property together with  $\V \hookrightarrow \H$ we get,
\begin{align*}
\int_{t_1}^{t_2} \inner{F(\widehat{u}_n(p))}{\Z_n(\eta)}_\H \,dp &- \int_{t_1}^{t_2} \inner{F(\widehat{u}(p))}{\eta}_\H \,dp \\
&\leq \int_0^T (\mathcal{G}(\|\widehat{u}_n(p)\|_\V, \|\widehat{u}(p)\|_\V)\|\widehat{u}_n(p) - \widehat{u}(p)\|_\V)|\eta|_\H \,dp + \int_0^T |F(\widehat{u}_n(p))|_\H \|\Z_n(\eta) - \eta\|_\V \,dp \\
&\leq \left(\int_0^T (\mathcal{G}(\|\widehat{u}_n(p)\|_\V, \|\widehat{u}(p)\|_\V))^2 \,dp\right)^{\frac{1}{2}} \cdot \left(\int_0^T \|\widehat{u}_n(p) - \widehat{u}(p)\|_\V^2 \,dp\right)^{\frac{1}{2}} \\
&\quad + \left(\int_0^T |F(\widehat{u}_n(p))|^2_\H \,dp\right)^{\frac{1}{2}} \cdot \left(\int_0^T \|\Z_n(\eta) - \eta\|_\V^2 \,dp\right)^{\frac{1}{2}}
\end{align*}

Now, $\widehat{u}_n(p) \to \widehat{u}(p)$ strongly in $L^2([0,T], \V)$, bounded uniformly  $L^2([0,T], \V)$ and $\Z_n(\eta) \to \eta$ in $\V$. It follows
\[
\lim_{n\to\infty} \int_{t_1}^{t_2} \inner{F(\widehat{u}_n(p))}{\Z_n(\eta)}_\H \,dp - \int_{t_1}^{t_2} \inner{F(\widehat{u}(p))}{\eta}_\H \,dp = 0, \quad \Phat\text{-a.s.}
\]
We are done with (3).

Next, we prove (4). Let $\eta \in \H$,
\begin{align*}
\int_{t_1}^{t_2} \inner{m_k(\widehat{u}_n(p))}{\Z_n(\eta)}_\H \,dp &- \int_{t_1}^{t_2} \inner{m_k(\widehat{u}(p))}{\Z(\eta)}_\H \,dp \\
&= \int_{t_1}^{t_2} \inner{m_k(\widehat{u}_n(p)) - m_k(\widehat{u}(p)))}{\eta}_\H \,dp - \int_{t_1}^{t_2} \inner{m_k(\widehat{u}_n(p))}{\Z_n(\eta) + \eta}_\H \,dp, \\
&\leq \int_0^T |m_k(\widehat{u}_n(p)) - m_k(\widehat{u}(p)))|_\H |\eta|_\H \,dp + \int_0^T |m_k(\widehat{u}_n(p)|_\H |\Z_n(\eta) - \eta|_\H \,dp.
\end{align*}

Using Lipschitz property of $m_k(\widehat{u}_n(p))$ along with $\V \hookrightarrow \H$ and \eqref{m_K} we obtain
\begin{align*}
\int_{t_1}^{t_2} \inner{m_k(\widehat{u}_n(p))}{\Z_n(\eta)}_\H \,dp &- \int_{t_1}^{t_2} \inner{m_k(\widehat{u}(p))}{\Z(\eta)}_\H \,dp \\
&\leq \int_0^T C\|f_k\|_{\V}^2 \left[2 + \|\widehat{u}_n(p)\|_\V^2 + \|\widehat{u}(p)\|_\V^2 + (\|\widehat{u}_n(p)\|_\V + \|\widehat{u}(p)\|_\V)^2\right] \|\widehat{u}_n(p) - \widehat{u}(p)\|_\V |\eta|_\H \,dp \\
&\quad + \int_0^T |m_k(\widehat{u}_n(p)|_\H C \| \Z_n(\eta) - \eta \| \,dp, \\
&\leq \left[\int_0^T \left(C\|f_k\|_\V^2 \left[2 + \|\widehat{u}_n(p)\|_\V^2 + \|\widehat{u}(p)\|_\V^2 + (\|\widehat{u}_n(p)\|_\V + \|\widehat{u}(p)\|_\V)^2\right]\right)^2 |\eta|^2_\H \,dp\right]^{\frac{1}{2}} \\
&\quad \times \left[\int_0^T \|\widehat{u}_n(p) - \widehat{u}(p)\|_\V^2 \,dp\right]^{\frac{1}{2}}  + \left[\int_0^T |m_k(\widehat{u}_n(p))|^2_\H \right]^{\frac{1}{2}} \cdot \left[\int_0^T \| \Z_n(\eta) - \eta \|_\V^2 \,dp\right]^{\frac{1}{2}}
\end{align*}

By similar argumentation as used in (3),
\begin{align*}
&\left[\int_0^T \left(C\|f_k\|_\V^2 \left[2 + \|\widehat{u}_n(p)\|_\V^2 + \|\widehat{u}(p)\|_\V^2 + (\|\widehat{u}_n(p)\|_\V + \|\widehat{u}(p)\|_\V)^2\right]\right)^2 |\eta|^2_\H \,dp\right]^{\frac{1}{2}} \\
&\quad \times \left[\int_0^T \|\widehat{u}_n(p) - \widehat{u}(p)\|_\V^2 \,dp\right]^{\frac{1}{2}} + \left[\int_0^T |m_k(\widehat{u}_n(p))|^2_\H \right]^{\frac{1}{2}} \cdot \left[\int_0^T \| \Z_n(\eta) - \eta \|_\V^2 \,dp\right]^{\frac{1}{2}} \to 0
\end{align*}
We are done with (4).

Finally, we will prove the result (5). Suppose $\eta \in \H$ and  with following Lipschitz condition \cite{Hussain_2005},
\[
\|B_k(\widehat{u}_n(p) - B_k(\widehat{u}(p)))\|_\H \leq |f_k|_\H (|\widehat{u}_n(p)|_\H + |\widehat{u}(p)|_\H)|\widehat{u}_n(p) - \widehat{u}(p)|_\H.
\]
We have 
\begin{align*}
\int_{t_1}^{t_2} \inner{B_k(\widehat{u}_n(p))}{\Z_n(\eta)}_\H \,dW_j(p) &- \int_{t_1}^{t_2} \inner{B_k(\widehat{u}(p))}{\Z(\eta)}_\H \,dW_k(p) \\
&= \int_{t_1}^{t_2} \inner{B_k(\widehat{u}_n(p)) - B_k(\widehat{u}(p)))}{\eta}_\H \,dW_k(p) + \int_{t_1}^{t_2} \inner{B_k(\widehat{u}_n(p))}{\Z_n(\eta) - \eta}_\H \,dW_k(p) \\
&\leq \int_0^T |B_k(\widehat{u}_n(p)) - B_k(\widehat{u}(p))|_\H|\eta|_\H \,dW_k(p) + \int_0^T |B_k(\widehat{u}_n(p))|_\H|\Z_n(\eta) - \eta|_\H \,dW_k(p) \\
&\leq \int_0^T |f_k|_\H (|\widehat{u}_n(p)|_\H + |\widehat{u}(p)|_\H)|\widehat{u}_n(p) - \widehat{u}(p)|_\H |\eta|_\H \,dW_k(p) \\
&\quad + \int_0^T |B_k(\widehat{u}_n(p))|_\H|\Z_n(\eta) - \eta|_\H \,dW_k(p) \\
&= \mathcal{I}_{1} + \mathcal{I}_{2} 
\end{align*}

Using Ito Isometry, we have
\begin{align*}
\hat{\E}(\mathcal{I}_{1}^{2} )&=\hat{\E} \left(\left(\int_0^T |B_k(\widehat{u}_n(p)) - B_k(\widehat{u}(p)))|_\H|\eta|_\H \,dW_k(p)\right)^2\right) \\
&= \hat{\E}\left(\int_0^T |f_k|^2_\H (|\widehat{u}_n(p)|_\H + |\widehat{u}_h|_\H)^2 |\widehat{u}_n(p) - \widehat{u}(p)|^2_\H |\eta|^2_\H \,d(p)\right) \\
&\leq \hat{\E}\left[\left(\int_0^T |f_k|^4_\H (|\widehat{u}_n(p)|_\H + |\widehat{u}_h|_\H)^4 |\eta|^4_\H \,dp\right)^{\frac{1}{2}} \cdot \left(\int_0^T |\widehat{u}_n(p) - \widehat{u}(p)|^4_\H \,dp\right)^{\frac{1}{2}}\right] \\
&\leq \hat{\E}\left[\left(\int_0^T |f_k|^4_\H (|\widehat{u}_n(p)|_\H + |\widehat{u}_h|_\H)^4 |\eta|^4_\H \,dp\right)^{\frac{1}{2}} \cdot \|\widehat{u}_n(p) - \widehat{u}(p)\|^2_{L^4([0,T],\H)}\right].
\end{align*}

Using  $C([0,T], \H) \hookrightarrow L^4([0,T], \H)$, it follows that,
\begin{align*}
\hat{\E}(\mathcal{I}_1^2) &= \hat{\E}\left(\left(\int_0^T |B_k(\widehat{u}_n(p)) - B_k(\widehat{u}(p)))|_\H|\eta|_\H \,dW_k(p)\right)^2\right) \\
&\leq \hat{\E}\left[\left(\int_0^T |f_k|^4_\H (|\widehat{u}_n(p)|_\H + |\widehat{u}_h|_\H)^4 |\eta|^4_\H \,dp\right)^{\frac{1}{2}} \cdot C^2 \|\widehat{u}_n(p) - \widehat{u}(p)\|^2_{C([0,T],\H)}\right]
\end{align*}
since $\widehat{u}_n(p) \to \widehat{u}(p)$ in $C([0,T], \H)$ so
\[
\lim_{n\to\infty} \hat{\E}(\mathcal{I}_1^2) = \hat{\E}\left(\left(\int_0^T |B_k(\widehat{u}_n(p)) - B_k(\widehat{u}(p)))|_\H|\eta|_\H \,dW_k(p)\right)^2\right) = 0.
\]

Consider $\hat{\E}(\mathcal{I}_2^2)$:
\begin{align*}
\hat{\E}(\mathcal{I}_2^2) &= \hat{\E}\left(\left(\int_0^T c^2|B_k(\widehat{u}_n(p))|_\H|\Z_n(\eta) - \eta|_\H \,dW_k(p)\right)^2\right) \\
&= \hat{\E}\left(\int_0^T c^2|B_k(\widehat{u}_n(p))|^2_\H|\Z_n(\eta) - \eta|^2_\H \,dW_k(p)\right) \\
&= \hat{\E}\left(|\Z_n(\eta) - \eta|^2_\H \int_0^T c^2|B_k(\widehat{u}_n(p))|^2_\H \,dW_k(p)\right)
\end{align*}

As $\Z_n\eta \to \eta$ in $\H$ so $|\Z_n\eta - \eta|_\H \to 0$ giving
\[
\lim_{n\to\infty} \hat{\E}(\mathcal{I}_2^2) = \hat{\E}\left(|\Z_n(\eta) - \eta|^2_\H \int_0^T c^2|B_k(\widehat{u}_n(p))|^2_\H \,dW_k(p)\right) = 0
\]

Thus,
\[
\hat{\E}\left[\int_{t_1}^{t_2} \inner{B_k(\widehat{u}_n(p))}{\Z_n(\eta)}_\H \,dW_k(p) - \int_{t_1}^{t_2} \inner{B_k(\widehat{u}(p)))}{\Z(\eta)}_\H \,dW_k(p)\right] \leq \hat{\E}(\mathcal{I}_1^2) + \hat{\E}(\mathcal{I}_2^2) \to 0,~~~~~as ~~~n \to \infty
\]

Hence, (5) is true and we are done with the proof.

\end{proof}

\begin{lemma}

For each $t_1, t_2 \in [0, T]$ such that $t_2 \leq t_1$, for every  $\eta \in \V$ and for all $h \in C([0,T], \H)$, the following equation is true.
\begin{align}\label{main}
    \lim_{n\to\infty} \mathbb{E}\left[
\left\langle \widehat{M}_n(t_1) - \widehat{M}_n(t_2), \eta \right\rangle_H h(\widehat{u}_n|_{[0,t_2]})
\right]
= \mathbb{E}\left[
\left\langle \widehat{M}(t_1) - \widehat{M}(t_2), \eta \right\rangle_\H h(\widehat{u}|_{[0,t_2]})
\right]
\end{align}

\end{lemma}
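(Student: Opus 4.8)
The plan is to expand $\widehat{M}_n$ and $\widehat{M}$ into the five summands appearing in \eqref{eqM_n}, pass to the limit in each summand separately, and then upgrade the resulting $\hat{\PP}$-a.s.\ convergence of the integrand to convergence of its expectation by a uniform integrability argument. First I would observe that, by the computations of Section~3, the projection $\Z_n$ acts as the identity on $\A\widehat{u}_n$, $F(\widehat{u}_n)$, $m_k(\widehat{u}_n)$ and $B_k(\widehat{u}_n)$, and $\widehat{u}_n(t)\in\H_n$, so every term of $\widehat{M}_n(t)$ lies in $\H_n$ and one may replace $\eta$ by $\Z_n\eta$ inside each inner product. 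Thus $\langle\widehat{M}_n(t_1)-\widehat{M}_n(t_2),\eta\rangle_\H$ decomposes, up to fixed signs and the factor $\tfrac12$, into
\begin{gather*}
\langle \widehat{u}_n(t_1)-\widehat{u}_n(t_2),\Z_n\eta\rangle_\H,\qquad \int_{t_2}^{t_1}\langle \A\widehat{u}_n(p),\Z_n\eta\rangle_\H\,dp,\qquad \int_{t_2}^{t_1}\langle F(\widehat{u}_n(p)),\Z_n\eta\rangle_\H\,dp,\\
\sum_{k=1}^N\int_{t_2}^{t_1}\langle m_k(\widehat{u}_n(p)),\Z_n\eta\rangle_\H\,dp,\qquad \sum_{k=1}^N\int_{t_2}^{t_1}\langle B_k(\widehat{u}_n(p)),\Z_n\eta\rangle_\H\,dW_k(p),
\end{gather*}
and parts $(1)$--$(5)$ of Theorem~\ref{5.3} apply to these five terms respectively. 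Summing, $\langle\widehat{M}_n(t_1)-\widehat{M}_n(t_2),\eta\rangle_\H\to\langle\widehat{M}(t_1)-\widehat{M}(t_2),\eta\rangle_\H$ $\hat{\PP}$-a.s.; for the stochastic term this holds a priori only in $L^2$ (Theorem~\ref{5.3}$(5)$ is obtained via the It\^o isometry), hence $\hat{\PP}$-a.s.\ along a subsequence.

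Next I would use \eqref{same distribution}: since $\widehat{u}_n\to\widehat{u}$ in $C([0,T],\H)$ $\hat{\PP}$-a.s.\ and $h$ is continuous (and bounded, as in the martingale problem), $h(\widehat{u}_n|_{[0,t_2]})\to h(\widehat{u}|_{[0,t_2]})$ $\hat{\PP}$-a.s., so the integrand on the left of \eqref{main} converges $\hat{\PP}$-a.s.\ to the one on the right. To interchange limit and expectation I would invoke Vitali's convergence theorem, for which a uniform bound in $L^2(\hat{\PP})$ of the integrand suffices. Estimating $|h|\le\|h\|_\infty$ and applying Cauchy--Schwarz in $\H$ reduces this to $\sup_n\hat{\EE}\big[|\widehat{M}_n(t_1)-\widehat{M}_n(t_2)|_\H^2\big]<\infty$, which I would obtain term by term from estimates already in hand: \eqref{est1} controls $\widehat{u}_n(t_1)-\widehat{u}_n(t_2)$; Cauchy--Schwarz in time together with \eqref{Au} and \eqref{est3} controls the $\A$-integral; the bound $|F(\widehat{u}_n)|_\H\le C_7\|\widehat{u}_n\|_\V$ of Section~4 with \eqref{est1} controls the $F$-integral; \eqref{m_K} controls the $m_k$-integral; and the It\^o isometry with $|B_k(\widehat{u}_n)|_\H\le C_9$ from \eqref{5.15} controls the stochastic integral. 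Vitali then yields \eqref{main} along the subsequence above, and since its right-hand side does not depend on the subsequence, along the full sequence.

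The genuinely delicate point is this last uniform $L^2$ bound: $F$ and $m_k$ grow like high powers of $\|\widehat{u}_n\|_\V$, so the argument really hinges on Theorem~\ref{thm:4.1}, which allows the factors $\mathcal{G}(\|\widehat{u}_n\|_\V,0)$ and $\|m_k(\widehat{u}_n)\|_\H$ to be absorbed into constants, leaving only a single power $\|\widehat{u}_n\|_\V^2$ (and $\|\widehat{u}_n\|_{\mathcal{E}}^2$ for the biharmonic term) to integrate against \eqref{est1}--\eqref{est3}. The only secondary subtlety is bookkeeping: since Theorem~\ref{5.3}$(5)$ delivers the stochastic term in $L^2$ rather than pathwise, the $\hat{\PP}$-a.s.\ statements above should be read along a subsequence, with uniqueness of the limit removing that subsequence at the very end.
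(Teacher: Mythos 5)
Your proposal is correct and follows the same overall skeleton as the paper: decompose $\widehat{M}_n$ via \eqref{eqM_n} into the five summands tested against $\Z_n\eta$, invoke Theorem \ref{5.3} to get $\hat{\PP}$-a.s.\ convergence of $\langle \widehat{M}_n(t_1)-\widehat{M}_n(t_2),\eta\rangle_\H$, combine with $h(\widehat{u}_n|_{[0,t_2]})\to h(\widehat{u}|_{[0,t_2]})$ from \eqref{same distribution}, and close with Vitali via a uniform $L^2(\hat{\PP})$ bound on the integrand. The one place where you genuinely diverge is the source of that uniform bound: you assemble $\sup_n\hat{\EE}\bigl[|\widehat{M}_n(t_1)-\widehat{M}_n(t_2)|_\H^2\bigr]<\infty$ term by term from the a priori estimates \eqref{est1}--\eqref{est3}, the bound on $F$, \eqref{m_K}, and the It\^o isometry with \eqref{5.15}, whereas the paper gets it in one stroke by treating $\widehat{M}_n$ as a continuous square-integrable martingale (Lemma \ref{lem:5.1}) and applying Burkholder's inequality to its quadratic variation, which reduces everything to $|B_k(\widehat{u}_n)|_\H\le C_9$. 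Your route is more elementary and does not presuppose Lemma \ref{lem:5.1}, at the cost of more bookkeeping and of leaning on the pathwise-type bounds on $\mathcal{G}(\|\widehat{u}_n\|_\V,0)$ and $\|m_k(\widehat{u}_n)\|_\H$ (the same bounds the paper itself uses in Section 4); the paper's route is shorter but requires the martingale identification first. You also flag, correctly, that part (5) of Theorem \ref{5.3} is proved via the It\^o isometry and so gives $L^2$ rather than pathwise convergence, forcing a subsequence argument that is then removed by uniqueness of the limit --- a point the paper silently glosses over when asserting \eqref{5.17} $\hat{\PP}$-a.s.; your handling of it is the more careful one.
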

\begin{proof}

Let $t_1, t_2 \in [0, T]$ with $t_2 \leq t_1$ and $\eta \in V$. Then using \eqref{eqM_n} we have
\begin{align}\label{MM}
\langle \widehat{M}_n(t_1) - \widehat{M}_n(t_2), \eta \rangle &= \langle \widehat{u}_n(t_1) - \widehat{u}_n(t_2), \Z_n\eta \rangle - \int_{t_2}^{t_1} \langle (-\Delta^{2}(\widehat{u}_n(\tau))+2\Delta(\widehat{u}_n(\tau))), \Z_n\eta \rangle_H d\tau \notag \\
&\quad - \int_{t_2}^{t_1} \langle F(\widehat{u}_n(\tau)), \Z_n\eta \rangle_H d\tau  - \frac{1}{2} \sum_{k=1}^N \int_{t_2}^{t_1} \langle m_k(\widehat{u}_n(\tau)), \Z_n\eta \rangle_H d\tau  - \sum_{k=1}^N \int_{t_2}^{t_1} \langle B_k(\widehat{u}_n(\tau)), \Z_n\eta \rangle d\tau 
\end{align}

Using Theorem \eqref{5.3} to  \eqref{MM} we have
\begin{equation}\label{5.17}
\lim_{n\to\infty} \langle \widehat{M}_n(t_1) - \widehat{M}_n(t_2), \eta \rangle = \langle \widehat{M}(t_1) - \widehat{M}(t_2), \eta \rangle \quad \widehat{\mathbb{P}}-a.s. 
\end{equation}

Let's prove \eqref{main}. As we know that $\widehat{u}_n \to \widehat{u}$ in $\mathcal{X}_T$ and thus $\widehat{u}_n \to \widehat{u}$ in $C([0, T], \H)$, which gives
\begin{equation}\label{5.18}
\lim_{n\to\infty} h(\widehat{u}_n|_{[0,t_2]}) = h(\widehat{u}|_{[0,t_2]}) \quad \widehat{\mathbb{P}}-a.s. 
\end{equation}
and
\[
\sup_{n\in\mathbb{N}} |h(\widehat{u}_n|_{[0,t_2]})|_{L^\infty} < \infty.\] 

We set
\begin{align}
    \eta_n(\omega) := \langle \widehat{M}_n(t_1, \omega) - \widehat{M}_n(t_2, \omega), \eta_1 \rangle_\H h(\widehat{u}_n|_{[0,t_2]}), \quad \omega \in \widehat{\Omega}.
\end{align}
We claim that
\begin{equation}\label{claim}
\sup_{1\leq n} \widehat{\mathbb{E}}(|\eta_n|^2) < \infty 
\end{equation}

Using  Cauchy-Schwarz inequality together with $\V \hookrightarrow \V'$ $\forall n \in \mathbb{N}$, we can find a positive constant $c$ such that
\begin{align}\label{same}
    \widehat{\mathbb{E}}\left[|\eta_n|^2\right] \leq c|h|^2_{L^\infty}|\eta_1|^2_\V \widehat{\mathbb{E}}[|\widehat{M}_n(t_1)|^2_\H + |\widehat{M}_n(t_2)|^2_\H].
\end{align}

But $\widehat{M}_n$ is a continuous process and martingale having quadratic variation given in \eqref{lem:5.1}, using Burkholder's inequality we get
\begin{equation}\label{bb}
\widehat{\mathbb{E}} \left[ \sup_{p\in[0,T]} |\widehat{M}_n(p)|^2_\H]\right] \leq c\widehat{\mathbb{E}}\left[ \sum_{k=1}^N \int_0^T |B_k(\widehat{u}_n(\tau))|^2_\H d\tau \right] 
\end{equation}

Using Equation \eqref{bb} together with \eqref{5.15} we get
\begin{align}\label{ff}
    \widehat{\mathbb{E}}\left[ \sup_{p\in[0,T]} |\widehat{M}_n(p)|^2_\H\right] \leq \widehat{\mathbb{E}}\left[C_{9}^2 T N\right] = C_{9}^2 T N < \infty
\end{align}

Thus, using \eqref{ff} together with \eqref{bb}, we can conclude that \eqref{claim} is true. So, $\{\eta_n\}$ is uniform integrable sequence and from \eqref{5.17}  we can deduce that $\eta_n$ converges $\mathbb{P}-a.s.$. Hence, the application of the Vitali Theorem gives the required proof.

\end{proof}

\begin{corollary}\label{cor5.5}
For all $t_1, t_2 \in [0, T]$ such that $t_2 \leq t_1$ one has
\[
\widehat{\mathbb{E}}\left[ \widehat{M}(t_1) - \widehat{M}(t_2) \big| \mathscr{F}_{t_1} \right] = 0.
\]
\end{corollary}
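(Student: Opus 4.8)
The plan is to combine the martingale identity \eqref{eq:7.8} satisfied by each Galerkin process $\widehat{M}_n$ with the limit passage \eqref{main}. Since $u_n$ and $\widehat{u}_n$ have the same law and $u_n$ solves \eqref{apprx}, the process $\widehat{M}_n$ is an $\mathcal{H}$-valued square–integrable martingale for the natural filtration of $\widehat{u}_n$; in particular, for every $\eta\in\mathcal{V}$ and every bounded continuous test functional $h$ on path space,
\[
\widehat{\mathbb{E}}\!\left[\langle \widehat{M}_n(t_1)-\widehat{M}_n(t_2),\eta\rangle_{\mathcal{H}}\,h(\widehat{u}_n|_{[0,t_2]})\right]=0,\qquad n\geq 1 .
\]
Letting $n\to\infty$ and invoking \eqref{main} gives
\[
\widehat{\mathbb{E}}\!\left[\langle \widehat{M}(t_1)-\widehat{M}(t_2),\eta\rangle_{\mathcal{H}}\,h(\widehat{u}|_{[0,t_2]})\right]=0
\]
for all $\eta\in\mathcal{V}$ and all such $h$. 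This is the weak (tested) form of the desired orthogonality.

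The next step is to upgrade this weak identity to the stated conditional–expectation identity. First, $\widehat{M}(t_i)\in L^2(\widehat{\Omega};\mathcal{H})$: this follows from Fatou's lemma together with the uniform bound \eqref{ff} on $\widehat{\mathbb{E}}[\sup_{p}|\widehat{M}_n(p)|_{\mathcal{H}}^2]$ and the $\widehat{\mathbb{P}}$-a.s.\ convergence \eqref{5.17}. Using this and the density of $\mathcal{V}$ in $\mathcal{H}$, the displayed identity extends from $\eta\in\mathcal{V}$ to arbitrary $\eta\in\mathcal{H}$ by Cauchy–Schwarz and an approximation argument. Second, the bounded random variables of the form $h(\widehat{u}|_{[0,t_2]})$ form a multiplicative class which generates the $\sigma$-algebra $\mathscr{F}_{t_2}$ generated by the paths $\{\widehat{u}(s):s\le t_2\}$ — here one uses the $\mathcal{H}$-continuity of $\widehat{u}$ supplied by \eqref{same distribution} — so a monotone-class (functional-class) argument promotes the identity to
\[
\widehat{\mathbb{E}}\!\left[\langle \widehat{M}(t_1)-\widehat{M}(t_2),\eta\rangle_{\mathcal{H}}\,G\right]=0
\]
for every bounded $\mathscr{F}_{t_2}$-measurable $G$ and every $\eta\in\mathcal{H}$. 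By the definition of the $\mathcal{H}$-valued conditional expectation this yields $\langle \widehat{\mathbb{E}}[\widehat{M}(t_1)-\widehat{M}(t_2)\mid\mathscr{F}_{t_2}],\eta\rangle_{\mathcal{H}}=0$ a.s.\ for each $\eta$, whence $\widehat{\mathbb{E}}[\widehat{M}(t_1)-\widehat{M}(t_2)\mid\mathscr{F}_{t_2}]=0$, which is the claim (the natural filtration of $\widehat{u}$ being the relevant one, so that $\widehat{M}$ is a genuine $\mathcal{H}$-valued martingale).

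I expect the only delicate points to be in this last upgrading step rather than in any hard estimate: one must confirm that the test functionals $h(\widehat{u}|_{[0,t_2]})$ genuinely separate $\mathscr{F}_{t_2}$ (a standard consequence of path-continuity in $C([0,T],\mathcal{H})$), check the uniform integrability needed to exchange limit and expectation (already secured through \eqref{claim}–\eqref{ff} and the Vitali argument used for \eqref{main}), and verify the $L^2$-integrability of $\widehat{M}(t_i)$ so that the vector-valued conditional expectation is well defined. With these in hand the corollary is immediate.
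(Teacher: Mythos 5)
Your proof is correct and follows the same route the paper takes (implicitly): the paper likewise obtains this corollary by passing to the limit in the tested martingale identity \eqref{eq:7.8} by means of the convergence lemma \eqref{main}, and you are merely making explicit the standard density and monotone-class steps that upgrade the tested identity to a statement about the vector-valued conditional expectation. One remark: you (correctly) condition on $\mathscr{F}_{t_2}$, the earlier $\sigma$-algebra, which matches the test functionals $h(\widehat{u}|_{[0,t_2]})$; the $\mathscr{F}_{t_1}$ appearing in the corollary as stated is evidently a typo, since conditioning the increment on the later $\sigma$-algebra could not yield zero.
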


\begin{lemma}\label{5.6}
For every $t_1, t_2 \in [0, T]$ with $t_2 \leq t_1$ and $\forall \eta_1, \eta_2 \in V$ we have
\begin{align*}
\lim_{n\to\infty} \widehat{\mathbb{E}}&\left[
\left(\langle \widehat{M}_n(t_1), \eta_1 \rangle \langle \widehat{M}_n(t_1), \eta_2 \rangle - \langle \widehat{M}_n(t_2), \eta_1 \rangle \langle \widehat{M}_n(t_2), \eta_2 \rangle\right) h(\widehat{u}_n|_{[0,t_2]})
\right]\\
&= \widehat{\mathbb{E}}\left[
\left(\langle \widehat{M}(t_1), \eta_1 \rangle \langle \widehat{M}(t_1), \eta_2 \rangle - \langle \widehat{M}(t_2), \eta_1 \rangle \langle \widehat{M}(t_2), \eta_2 \rangle\right) h(\widehat{u}_n|_{[0,t_2]})
\right],
\end{align*}
where $h \in C([0, T], H)$ and $\langle \cdot, \cdot \rangle$ is the duality product between $\V$ and $\V'$.
\end{lemma}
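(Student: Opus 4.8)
The plan is to run the same scheme as in the proof of the previous lemma: establish $\widehat{\mathbb{P}}$-a.s.\ convergence of the integrands, prove a uniform $L^2$-bound to secure uniform integrability, and then pass to the limit by the Vitali Theorem.

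First I would record the pointwise convergence. By \eqref{5.17} (itself a consequence of Theorem \eqref{5.3}), for $j=1,2$ and every $\eta\in\V$ one has $\langle \widehat{M}_n(t_j),\eta\rangle \to \langle \widehat{M}(t_j),\eta\rangle$, $\widehat{\mathbb{P}}$-a.s. Taking the product of these convergences for $\eta_1$ and $\eta_2$ and combining with \eqref{5.18}, the integrand
\[
\Xi_n := \Bigl(\langle \widehat{M}_n(t_1),\eta_1\rangle\langle \widehat{M}_n(t_1),\eta_2\rangle - \langle \widehat{M}_n(t_2),\eta_1\rangle\langle \widehat{M}_n(t_2),\eta_2\rangle\Bigr) h(\widehat{u}_n|_{[0,t_2]})
\]
converges $\widehat{\mathbb{P}}$-a.s.\ to the corresponding expression written with $\widehat{M}$ and $\widehat{u}$.

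Next I would show $\{\Xi_n\}_{n\ge1}$ is uniformly integrable by bounding it in $L^2(\widehat{\Omega})$. Since $h$ is bounded and $\V\hookrightarrow\H$, Cauchy--Schwarz reduces this to a uniform bound on $\widehat{\mathbb{E}}\bigl[|\widehat{M}_n(t_1)|^4_\H + |\widehat{M}_n(t_2)|^4_\H\bigr]$. By Lemma \eqref{lem:5.1}, $\widehat{M}_n$ is a continuous square-integrable martingale with $\langle\langle \widehat{M}_n\rangle\rangle_t = \sum_{k=1}^N\int_0^t |B_k(\widehat{u}_n(p))|^2_\H\,dp$, so the Burkholder--Davis--Gundy inequality gives
\[
\widehat{\mathbb{E}}\Bigl[\sup_{p\in[0,T]}|\widehat{M}_n(p)|^4_\H\Bigr] \le c\,\widehat{\mathbb{E}}\Bigl[\Bigl(\sum_{k=1}^N\int_0^T |B_k(\widehat{u}_n(p))|^2_\H\,dp\Bigr)^2\Bigr].
\]
Inserting the uniform bound $|B_k(\widehat{u}_n(p))|_\H\le C_9$ from \eqref{5.15} yields $\widehat{\mathbb{E}}[\sup_p|\widehat{M}_n(p)|^4_\H]\le c(C_9^2TN)^2<\infty$, uniformly in $n$, hence $\sup_n\widehat{\mathbb{E}}[|\Xi_n|^2]<\infty$.

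Combining the $\widehat{\mathbb{P}}$-a.s.\ convergence of $\Xi_n$ with this uniform $L^2$-bound, the Vitali Theorem gives $\widehat{\mathbb{E}}[\Xi_n]\to\widehat{\mathbb{E}}[\Xi_\infty]$, which is the assertion of the lemma. The only step requiring genuine work is the fourth-moment estimate for $\widehat{M}_n$; it is crucial that neither the BDG constant nor $C_9$ depends on $n$, which holds since $C_9$ is built only from $\|f_k\|_\V$ and the uniform energy constant $C_1$ of Theorem \eqref{thm:4.1}. (Note that the right-hand side of the statement presumably should carry $h(\widehat{u}|_{[0,t_2]})$ rather than $h(\widehat{u}_n|_{[0,t_2]})$; the argument above delivers the former.)
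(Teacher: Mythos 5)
Your proposal is correct and follows essentially the same route as the paper: $\widehat{\mathbb{P}}$-a.s.\ convergence of the integrands via \eqref{5.17} and \eqref{5.18}, a uniform moment bound on $\widehat{M}_n$ obtained from the Burkholder inequality together with the quadratic variation of Lemma \ref{lem:5.1} and the bound $|B_k(\widehat{u}_n)|_{\H}\le C_9$, and then the Vitali theorem (the paper works with a general exponent $a>1$ where you fix $a=2$, an immaterial difference). Your closing remark that the right-hand side should carry $h(\widehat{u}|_{[0,t_2]})$ rather than $h(\widehat{u}_n|_{[0,t_2]})$ is also correct and consistent with how the limit is used afterwards.
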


\begin{proof}

Assuem that $t_1, t_2 \in [0, T]$ with $t_2 \leq t_1$ and $\forall \eta_1, \eta_2 \in \V$. 
We set
\begin{align*}
\eta_n(\omega) &:= \left(\langle \widehat{M}_n(t_1, \omega), \eta_1 \rangle \langle \widehat{M}_n(t_1, \omega), \eta_2 \rangle - \langle \widehat{M}_n(t_2, \omega), \eta_1 \rangle \langle \widehat{M}_n(t_2, \omega), \eta_2 \rangle\right) h(\widehat{u}_n|_{[0,t_2]}(\omega)), \\
\eta(\omega) &:= \left(\langle \widehat{M}(t_1, \omega), \eta_1 \rangle \langle \widehat{M}(t_1, \omega), \eta_2 \rangle - \langle \widehat{M}(t_2, \omega), \eta_1 \rangle \langle \widehat{M}(t_2, \omega), \eta_2 \rangle\right) h(\widehat{u}|_{[0,t_2]}(\omega)), \quad \omega \in \widehat{\Omega}.
\end{align*}

Using \eqref{5.17} and \eqref{5.18} we have
\[
\lim_{n\to\infty} \eta_n(\omega) = \eta(\omega), \quad \widehat{\mathbb{P}}-a.s.
\]

We claim that for $a > 1$, the following result holds
\begin{equation}\label{4}
\sup_{1\leq n} \widehat{\mathbb{E}} [|\eta_n|^a] < \infty 
\end{equation}

We have same inequality as in previous lemma \eqref{same}
\begin{equation}\label{1}
\widehat{\mathbb{E}} [|\eta_n|^a] \leq c|h|^a_{L^\infty}|\eta_1|^a_V |\eta_2|^a_V \widehat{\mathbb{E}}\left[ |\widehat{M}_n(t_1)|_\H^{2a} + |\widehat{M}_n(t_2)|_\H^{2a} \right].
\end{equation}

Again, using Burkholder's inequality Theorem, we get
\begin{equation}\label{2}
\widehat{\mathbb{E}}\left[ \sup_{p\in[0,T]} |\widehat{M}_n(p)|_\H^{2a} \right] \leq c\widehat{\mathbb{E}}\left[ \left( \sum_{k=1}^N \int_0^T |B_k(\widehat{u}_n(\tau))|_\H^2 d\tau \right)^a \right]. 
\end{equation}

Also,
\begin{equation}\label{3}
\widehat{\mathbb{E}}\left[ \left( \sum_{k=1}^N \int_0^T |B_k(\widehat{u}_n(\tau))|_\H^2 d\tau \right)^a \right] \leq \widehat{\mathbb{E}}\left[ \left( N C_{9}^2 T \right)^a \right] = N^a C_{9}^{2a} T^a < \infty.
\end{equation}

From \eqref{1}, \eqref{2} and \eqref{3} we say  that \eqref{4} is true . And by using  Vitali Theorem , we have
\begin{align*}
    \lim_{n\to\infty} \widehat{\mathbb{E}}(\eta_n) = \widehat{\mathbb{E}}(\eta).
\end{align*}
\end{proof}

\begin{lemma}\label{5.7}
For each $t_1, t_2 \in [0, T]$, $\forall \eta_1, \eta_2 \in \V$ and $\forall h \in C([0, T], \H)$ we have
\begin{align*}
\lim_{n\to\infty} \widehat{\mathbb{E}}&\left[
\left(
\sum_{k=1}^N \int_{t_2}^{t_1} \langle (B_k(\widehat{u}_n(\tau)))^*\Z_n\eta_1, (B_k(\widehat{u}_n(\tau)))^*\Z_n\eta_2 \rangle_\R d\tau
\right) \cdot h(\widehat{u}_n|_{[0,t_2]})
\right]\\
&= \widehat{\mathbb{E}}\left[
\left(
\sum_{k=1}^N \int_{t_2}^{t_1} \langle (B_k(\widehat{u}(\tau)))^*\eta_1, (B_k(\widehat{u}(\tau)))^*\eta_2 \rangle_\R d\tau
\right) \cdot h(\widehat{u}|_{[0,t_2]})
\right].
\end{align*}
\end{lemma}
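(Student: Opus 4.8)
The plan is to follow the same template used in Lemma~\ref{5.6}: show pointwise ($\hat\PP$-a.s.) convergence of the integrands, establish uniform integrability of the relevant random variables, and then invoke the Vitali convergence theorem. First I would fix $t_1,t_2\in[0,T]$ with $t_2\le t_1$, fix $\eta_1,\eta_2\in\V$ and $h\in C([0,T],\H)$, and set
\[
\xi_n(\o):=\Bigl(\sum_{k=1}^N\int_{t_2}^{t_1}\langle (B_k(\widehat{u}_n(\tau)))^*\Z_n\eta_1,(B_k(\widehat{u}_n(\tau)))^*\Z_n\eta_2\rangle_{\R}\,d\tau\Bigr)h(\widehat{u}_n|_{[0,t_2]}),
\]
and the analogous $\xi(\o)$ with $\widehat{u}_n$ replaced by $\widehat{u}$ and $\Z_n\eta_i$ by $\eta_i$. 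Since $B_k(u)\in\mathcal{L}(\R,\H)$ acts by $r\mapsto rB_k(u)$, one has $(B_k(u))^*\eta=\langle B_k(u),\eta\rangle_\H$, so the inner product $\langle (B_k(\widehat{u}_n(\tau)))^*\Z_n\eta_1,(B_k(\widehat{u}_n(\tau)))^*\Z_n\eta_2\rangle_\R$ equals $\langle B_k(\widehat{u}_n(\tau)),\Z_n\eta_1\rangle_\H\,\langle B_k(\widehat{u}_n(\tau)),\Z_n\eta_2\rangle_\H$; this reduction is the first step and turns the statement into a convergence of products of the scalar quantities already handled in Theorem~\ref{5.3}.

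For the pointwise convergence, I would combine three facts that hold $\hat\PP$-a.s.: $\widehat{u}_n\to\widehat{u}$ in $C([0,T],\H)$ by \eqref{same distribution}; $\Z_n\eta_i\to\eta_i$ in $\H$; and the Lipschitz estimate $\|B_k(\widehat{u}_n)-B_k(\widehat{u})\|_\H\le|f_k|_\H(|\widehat{u}_n|_\H+|\widehat{u}|_\H)|\widehat{u}_n-\widehat{u}|_\H$ from \cite{Hussain_2005} together with the uniform bound \eqref{5.15}, i.e. $|B_k(\widehat{u}_n(\tau))|_\H\le C_9$. Writing each factor $\langle B_k(\widehat{u}_n),\Z_n\eta_i\rangle_\H$ as $\langle B_k(\widehat{u}_n)-B_k(\widehat{u}),\eta_i\rangle_\H+\langle B_k(\widehat{u}),\Z_n\eta_i-\eta_i\rangle_\H+\langle B_k(\widehat{u}),\eta_i\rangle_\H$ shows the first two pieces vanish uniformly in $\tau\in[0,T]$, hence the product converges and, by dominated convergence on the finite interval $[t_2,t_1]$ (the integrand is bounded by $C_9^2|\eta_1|_\H|\eta_2|_\H$ uniformly), the time integral converges; multiplying by $h(\widehat{u}_n|_{[0,t_2]})\to h(\widehat{u}|_{[0,t_2]})$ from \eqref{5.18} gives $\xi_n\to\xi$ $\hat\PP$-a.s.

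For uniform integrability I would verify $\sup_n\hat\EE[|\xi_n|^2]<\infty$ (any exponent $>1$ suffices): using $|h|_{L^\infty}<\infty$, $|\langle B_k(\widehat{u}_n(\tau)),\Z_n\eta_i\rangle_\H|\le C_9\|\Z_n\eta_i\|_\H\le C_9\|\eta_i\|_\H$ (the projections $\Z_n$ are contractions on $\H$), and the fixed length $t_1-t_2\le T$, one gets a deterministic bound $|\xi_n|\le N\,C_9^2\,\|\eta_1\|_\H\|\eta_2\|_\H\,T\,|h|_{L^\infty}$, so in fact $\{\xi_n\}$ is uniformly bounded, which trivially gives uniform integrability. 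Then the Vitali convergence theorem yields $\lim_{n\to\infty}\hat\EE(\xi_n)=\hat\EE(\xi)$, which is exactly the claim. The only genuinely delicate point is the identification of the adjoint action and making sure the $\Z_n$ inserted on \emph{both} slots are handled (each contributes a $\Z_n\eta_i-\eta_i$ error term controlled by the uniform bound \eqref{5.15}); everything else is a routine repetition of the Lipschitz-plus-Vitali argument already deployed in Lemmas~\ref{5.6} and in Theorem~\ref{5.3}(5), so I would keep those computations brief and cite the earlier estimates.
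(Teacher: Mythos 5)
Your proposal is correct and follows essentially the same route as the paper: the same reduction of $(B_k)^*$ to scalar products, the same splitting of the error into a $\Z_n\eta_i-\eta_i$ piece (controlled by the uniform bound $|B_k(\widehat{u}_n)|_\H\le C_9$) and a $B_k(\widehat{u}_n)-B_k(\widehat{u})$ piece (controlled by the Lipschitz estimate and the a.s.\ convergence of $\widehat{u}_n$), followed by the uniform-integrability bound $(NC_9^2)^a|h|^a_{L^\infty}|\eta_1|^a_\H|\eta_2|^a_\H T^a$ and Vitali. Your observation that the bound is in fact deterministic, so uniform integrability is automatic, is a minor simplification but not a different argument.
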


\begin{proof}
Assume that  $\eta_1, \eta_2 \in V$, we define
\begin{align*}
\eta_n = \left(
\sum_{k=1}^N \int_{t_2}^{t_1} \langle (B_k(\widehat{u}_n(\tau,\omega)))^*\Z_n\eta_1, (B_k(\widehat{u}_n(\tau,\omega)))^*\Z_n\eta_2 \rangle_\R d\tau
\right) \cdot h(\widehat{u}_n|_{[0,t_2]}).
\end{align*}

We claim that $\eta_n$ is uniformly integrable which converges in $\widehat{\mathbb{P}}$-a.s. to some $\eta$. 
To prove our claim, it is sufficient to prove that  $\exists a > 1$ such that,
\begin{equation}\label{claimmm}
\sup_{1\leq n} \widehat{\mathbb{E}}[|\eta_n|^a] < \infty. 
\end{equation}

Using the Cauchy-Schwarz inequality, we have 
\[
|(B_k(\widehat{u}_n(\tau,\omega)))^*\Z_n\eta_1|_\R \leq |B_k(\widehat{u}_n(\tau,\omega))| \cdot |\Z_n\eta_1|_\R \leq C_{9}|\eta_1|_\H.
\]

By using  Hölder's inequality, we get
\begin{align*}
\widehat{\mathbb{E}}[|\eta_n|^a] &= \widehat{\mathbb{E}}\left[
\left|
\left(
\sum_{k=1}^N \int_{t_2}^{t_1} \langle (B_k(\widehat{u}_n(\tau)))^*\Z_n\eta_1, (B_k(\widehat{u}_n(\tau)))^*\Z_n\eta_2 \rangle_\R d\tau
\right) \cdot h(\widehat{u}_n|_{[0,t_2]})
\right|^a
\right] \\
&\leq |h|^a_{L^\infty} \widehat{\mathbb{E}}\left[
\left(
\sum_{k=1}^N \int_0^T |(B_k(\widehat{u}_n(\tau)))^*\Z_n\eta_1| \cdot |(B_k(\widehat{u}_n(\tau)))^*\Z_n\eta_2| d\tau
\right)^a
\right] \\
&\leq (N C_{9}^2)^a |h|^a_{L^\infty} |\eta_1|^a_\H |\eta_2|^a_\H T^a.
\end{align*}

Therefore, our claim \eqref{claimmm} is true. And we can say that $\eta_n$ is uniformly integrable and converges $\widehat{\mathbb{P}}$-a.s. to some $\eta$.

Next, we will show the point-wise convergence. Assume that  $\omega \in \widehat{\Omega}$:
\begin{align}\label{prv}
\lim_{n\to\infty} \int_{t_2}^{t_1} \sum_{k=1}^N \langle (B_k(\widehat{u}_n(\tau)))^*\Z_n\eta_1, (B_k(\widehat{u}_n(\tau)))^*\Z_n\eta_2 \rangle_R d\tau
= \int_{t_2}^{t_1} \sum_{k=1}^N \langle (B_k(\widehat{u}(\tau)))^*\eta_1, (B_k(\widehat{u}(\tau)))^*\eta_2 \rangle_R d\tau. 
\end{align}

Fix $\omega \in \widehat{\Omega}$:
\begin{enumerate}
\item $\widehat{u}_n(\cdot, \omega) \to \widehat{u}(\cdot, \omega)$
\item The sequence $\widehat{u}_n(\cdot, \omega)_{n\geq 1}$ is uniformly bounded in $L^2([0, T], \V)$.
\end{enumerate}

In order to prove \eqref{prv}, it is sufficient to prove the following 

\begin{align}
    (B_k(\widehat{u}_n(\tau, \omega)))^*\Z_n\eta_1 \to (B_k(\widehat{u}(\tau, \omega)))^*\eta_1 \quad \text{in } ~~~L^2([t_2, t_1], \mathbb{\R}).
\end{align}

Using the Cauchy-Schwarz inequality:
\begin{align}
&\int_{t_2}^{t_1} |(B_k(\widehat{u}_n(\tau, \omega)))^*\Z_n\eta_1 - (B_k(\widehat{u}(\tau, \omega)))^*\eta_1|_\R^2 d\tau \notag\\
&= \int_{t_2}^{t_1} |(B_k(\widehat{u}_n(\tau, \omega)))^*(\Z_n\eta_1 - \eta_1) + (B_k(\widehat{u}_n(\tau, \omega)) - B_k(\widehat{u}(\tau, \omega)))^*\eta_1|_\R^2 d\tau \notag \\
&\leq 2\int_{t_2}^{t_1} |B_k(\widehat{u}_n(\tau, \omega))|^2 |\Z_n\eta_1 - \eta_1|_\H^2 d\tau + 2\int_{t_2}^{t_1} |B_k(\widehat{u}_n(\tau, \omega)) - B_k(\widehat{u}(\tau, \omega))|^2 |\eta_1|_\H^2 d\tau 
\end{align}

We set
\begin{align*}
A^1_n &= \int_{t_2}^{t_1} |B_k(\widehat{u}_n(\tau, \omega))|^2 |\Z_n\eta_1 - \eta_1|_\H^2 d\tau \\
A^2_n &= \int_{t_2}^{t_1} |B_k(\widehat{u}_n(\tau, \omega)) - B_k(\widehat{u}(\tau, \omega))|^2 |\eta_1|_\H^2 d\tau.
\end{align*}

But
\begin{align}\label{Z}
\lim_{n\to\infty} |\Z_n\eta_1 - \eta_1|_\H = 0 
\end{align}
and
\begin{align}\label{Bk}
|B_k(\widehat{u}_n)| \leq C_{9}, 
\end{align}
Using \eqref{Z} and \eqref{Bk} it follows that 

\begin{align}
    \lim_{n\to\infty} A^1_n = 0
\end{align}

Also, from \cite{Hussain_2005},
\begin{align*}
    |B_k(\widehat{u}_n) - B_k(\widehat{u})|_\H \leq \|f_k\|_\V(\|\widehat{u}_n\|_\V + \|\widehat{u}\|_\V)\|\widehat{u}_n - \widehat{u}\|_\V
\end{align*}
Thus,
\begin{align*}
    \int_{t_2}^{t_1} |B_k(\widehat{u}_n(\tau, \omega)) - B_k(\widehat{u}(\tau, \omega))|^2 |\eta_1|_\H^2 d\tau
\leq |\eta_1|_\H \int_{t_2}^{t_1} \|f_k\|_\V(\|\widehat{u}_n(\tau, \omega)\|_\V + \|\widehat{u}(\tau, \omega)\|_\V)\|\widehat{u}_n(\tau, \omega) - \widehat{u}(\tau, \omega)\|_\V d\tau
\end{align*}

Using \eqref{same distribution} and $\widehat{u}_n(\cdot, \omega) \to \widehat{u}(\cdot, \omega)$, we get
\[
\lim_{n\to\infty} A^2_n = 0.
\]
We are done with the lemma.
\end{proof}

By applying Lemma \eqref{main}, we can pass the limit in \eqref{eq:7.8}. Using Lemmas \eqref{5.6} and \eqref{5.7}, we can pass the limit in \eqref{eq:7.9}. After passing limits, we conclude that
\[
\widehat{\mathbb{E}}\left[ \langle \widehat{M}(t_1) - \widehat{M}(t_2), \eta_1 \rangle_\H h(\widehat{u}|_{[0,t_2]}) \right] = 0.
\]
and
\begin{align*}
\widehat{\mathbb{E}} &\left[
\left(
\langle \widehat{M}(t_1), \eta_1 \rangle_H \langle \widehat{M}(t_1), \eta_2 \rangle_\H 
- \langle \widehat{M}(t_2), \eta_1 \rangle_\H \langle \widehat{M}(t_2), \eta_2 \rangle_\H
\right) h(\widehat{u}|_{[0,t_2]})
\right] \\
&- \widehat{\mathbb{E}}\left[
\left(
\sum_{k=1}^m \int_{t_2}^{t_1} \langle B_k(\widehat{u}(\tau))\eta_1, B_k(\widehat{u}(\tau))\eta_2 \rangle_\H d\tau
\right) h(\widehat{u}|_{[0,t_2]})
\right] = 0.
\end{align*}
Consequently, we have the following immediate corollary.

\begin{corollary}
For each $t\in [0,T]$ we have
\begin{align*}
\left \langle \left \langle\widehat{M} \right \rangle \right \rangle _t = \int_0^t \sum_{k=1}^N \|B_k(\widehat{u}(p))\|_\H^2 dp.
\end{align*}
\end{corollary}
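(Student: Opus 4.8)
The plan is to deduce the corollary directly from the two limiting identities just established, after recasting them in the language of quadratic variation. First, note that the first of those identities, combined with Corollary~\ref{cor5.5}, shows that $\widehat M$ is a continuous, square-integrable, $\H$-valued martingale for the filtration generated by $\widehat u$. The second identity (obtained by passing to the limit in \eqref{eq:7.9} via Lemmas~\ref{5.6} and~\ref{5.7}) says that for all $t_2\le t_1$ in $[0,T]$, all $\eta_1,\eta_2\in\V$ and all $h\in C([0,T],\H)$,
\[
\widehat{\mathbb E}\!\left[\left(\langle \widehat M(t_1),\eta_1\rangle\langle \widehat M(t_1),\eta_2\rangle-\langle \widehat M(t_2),\eta_1\rangle\langle \widehat M(t_2),\eta_2\rangle-\sum_{k=1}^N\int_{t_2}^{t_1}\langle B_k(\widehat u(\tau)),\eta_1\rangle_\H\langle B_k(\widehat u(\tau)),\eta_2\rangle_\H\,d\tau\right)h(\widehat u|_{[0,t_2]})\right]=0 .
\]
Since the variables $h(\widehat u|_{[0,t_2]})$, $h\in C([0,T],\H)$, exhaust a rich enough subclass of $\mathscr{F}_{t_2}$-measurable functions, this is equivalent to the statement that $t\mapsto\langle \widehat M(t),\eta_1\rangle\langle \widehat M(t),\eta_2\rangle-\sum_{k=1}^N\int_0^t\langle B_k(\widehat u(p)),\eta_1\rangle_\H\langle B_k(\widehat u(p)),\eta_2\rangle_\H\,dp$ is a martingale; that is, the cross-variation of the real martingales $\langle \widehat M,\eta_1\rangle$ and $\langle \widehat M,\eta_2\rangle$ equals $\sum_{k=1}^N\int_0^\cdot\langle B_k(\widehat u(p)),\eta_1\rangle_\H\langle B_k(\widehat u(p)),\eta_2\rangle_\H\,dp$.

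Next I would specialise to $\eta_1=\eta_2=e_j$, which is permissible because each eigenvector $e_j$ of $\mathcal A$ lies in $D(\mathcal A)=\mathcal E\subset\V$, and sum over $j\in\NN$. By Parseval, $\sum_j\langle \widehat M(t),e_j\rangle^2=|\widehat M(t)|_\H^2$ and $\sum_j\langle B_k(\widehat u(p)),e_j\rangle_\H^2=|B_k(\widehat u(p))|_\H^2$, so the summed identity yields that $t\mapsto|\widehat M(t)|_\H^2-\sum_{k=1}^N\int_0^t|B_k(\widehat u(p))|_\H^2\,dp$ is a martingale. By the definition of the scalar quadratic variation of an $\H$-valued martingale --- the one already used in Lemma~\ref{lem:5.1} --- this is exactly the assertion $\langle\langle\widehat M\rangle\rangle_t=\sum_{k=1}^N\int_0^t\|B_k(\widehat u(p))\|_\H^2\,dp$. (Equivalently, the family of identities over all $\eta_1,\eta_2\in\V$ identifies the operator-valued cross-variation of $\widehat M$ with the time integral of $p\mapsto\sum_{k=1}^N B_k(\widehat u(p))\otimes B_k(\widehat u(p))$, whose trace is the claimed scalar process.)

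The only delicate point, and the one I expect to be the main obstacle, is justifying the interchange of the infinite sum over $j$ with $\widehat{\mathbb E}$ and the time integral when passing from the countable family of scalar martingale identities to the single $\H$-valued one. This is handled by the uniform bound $|B_k(\widehat u(p))|_\H\le C_9$ from \eqref{5.15}, which gives $\sum_{k=1}^N\int_0^T|B_k(\widehat u(p))|_\H^2\,dp\le NC_9^2T<\infty$, together with the square-integrability of $\widehat M$ recorded in \eqref{ff}: these make the partial sums $\sum_{j\le m}\bigl(\langle\widehat M(\cdot),e_j\rangle^2-\sum_{k=1}^N\int_0^\cdot\langle B_k(\widehat u),e_j\rangle_\H^2\,dp\bigr)$ uniformly integrable and dominated, so both the martingale property and the identity pass to the limit $m\to\infty$ by Tonelli (for the nonnegative integrand) and dominated convergence. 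Continuity and $\widehat{\mathbb P}$-a.s.\ finiteness of $t\mapsto\langle\langle\widehat M\rangle\rangle_t$ are then immediate from the same estimates.
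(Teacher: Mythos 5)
Your argument is correct and follows the same route the paper intends: the corollary is meant to be read off from the two limit identities displayed just before it (the martingale property of $\widehat M$ and the identification of the cross-variations $\langle\widehat M,\eta_1\rangle$, $\langle\widehat M,\eta_2\rangle$ for $\eta_1,\eta_2\in\V$), which is precisely what you do. The paper gives no written proof at all ("immediate corollary"), so your additional care in passing from the countable family of scalar identities to the $\H$-valued quadratic variation via Parseval and the uniform bound $|B_k(\widehat u)|_\H\le C_9$ supplies detail the paper omits rather than a different method.
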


\section{ \textbf{\Large Existence of Martingale Solution}}
This section aims to state our main result of the paper.

\begin{theorem}\label{6.1}
There is a martingale solution to problem \eqref{main_eq_st_Ito}.
\end{theorem}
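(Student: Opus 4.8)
The plan is to assemble the convergence results of Sections~4 and~5 and feed them into the Martingale Representation Theorem. Recall that Jakubowski's generalisation of the Skorokhod theorem has already furnished, in \eqref{same distribution}, a probability space $(\hat{\Omega},\hat{\mathcal F},\hat{\mathbb P})$ carrying processes $\widehat u_n,\widehat u$ with $\widehat u_n$ and $u_n$ equal in law and $\widehat u_n\to\widehat u$ in $\mathcal X_T$, $\hat{\mathbb P}$-a.s. By \eqref{eq:7.5}--\eqref{eq:7.6} the limit satisfies $\widehat u\in L^2([0,T],D(\mathcal A))\cap C([0,T],\mathcal V_w)$ $\hat{\mathbb P}$-a.s.; by Theorem~\ref{manifold} together with the convergence in $C([0,T],L^2)$ one gets $\widehat u(t)\in M$ for every $t$; and $\widehat u(0)=u_0$ since $\mathcal Z_n(u_0)/|\mathcal Z_n(u_0)|\to u_0$ in $\mathcal H$ for $u_0\in\mathcal V\cap M$.

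First I would record the properties of the limiting process $\widehat M$, the $\hat{\mathbb P}$-a.s. limit of the $\widehat M_n$ defined in \eqref{eqM_n}: by Lemma~\ref{5.2} it has $\mathcal H$-valued continuous paths, by Corollary~\ref{cor5.5} it is a martingale for its natural filtration, and by the final corollary of the previous section its quadratic variation is $\langle\!\langle\widehat M\rangle\!\rangle_t=\int_0^t\sum_{k=1}^N\|B_k(\widehat u(p))\|_{\mathcal H}^2\,dp$. This is exactly the structural hypothesis required by Theorem~\ref{thm:martingale_representation}. Applying it with $Q=\mathrm{id}$ on $\mathbb R^N$ and predictable integrand $f(p)=(B_1(\widehat u(p)),\dots,B_N(\widehat u(p)))$ produces, after the (standard) enlargement of the stochastic basis and augmentation of the filtration $\hat{\mathbb F}$ described there, an $\mathbb R^N$-valued $\hat{\mathbb F}$-Wiener process $\widehat W=(\widehat W_k)_{k=1}^N$ with
\[
\widehat M(t)=\sum_{k=1}^N\int_0^t B_k(\widehat u(p))\,d\widehat W_k(p),\qquad t\in[0,T],\ \hat{\mathbb P}\text{-a.s.}
\]

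It then remains to unwind \eqref{eqM_n} in the limit. Fixing $v\in\mathcal V$ and $t\in[0,T]$, pairing \eqref{eqM_n} with $\mathcal Z_n v$ and passing to the limit, using Theorem~\ref{5.3}(1)--(5) to identify each term together with the representation of $\widehat M$ just obtained, yields
\[
\langle\widehat u(t),v\rangle-\langle\widehat u(0),v\rangle=\int_0^t\Big\langle-\Delta^2\widehat u(p)+2\Delta\widehat u(p)+F(\widehat u(p))+\tfrac12\sum_{k=1}^N m_k(\widehat u(p)),\,v\Big\rangle\,dp+\sum_{k=1}^N\int_0^t\langle B_k(\widehat u(p)),v\rangle\,d\widehat W_k(p)
\]
$\hat{\mathbb P}$-a.s. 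Since $\widehat u$ is $\hat{\mathbb F}$-progressively measurable with values in $D(\mathcal A)$, has $\hat{\mathbb P}$-a.e. paths in $C([0,T],\mathcal V_w)\cap L^2([0,T],D(\mathcal A))$, and satisfies $\widehat u(0)=u_0$, the tuple $(\hat{\Omega},\hat{\mathcal F},\hat{\mathbb F},\hat{\mathbb P},\widehat W,\widehat u)$ is a martingale solution of \eqref{main_eq_st_Ito} in the sense of Definition~\ref{Martingale solution def}, which proves the theorem.

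The steps I expect to demand the most care are: (i) upgrading the limiting identity from ``for a.e.\ $t$'' to ``for all $t\in[0,T]$'', which is handled by continuity of $\widehat u$ in $\mathcal V_w$ and of both sides of the equation in $t$; (ii) verifying that the filtration delivered by Theorem~\ref{thm:martingale_representation} indeed makes $\widehat u$ progressively measurable and $\widehat W$ a Wiener process with respect to it, the usual augmentation and adaptedness bookkeeping; and (iii) confirming that the quadratic-variation hypothesis of the representation theorem holds with equality, which is precisely the content of the quadratic-variation corollary recalled above. The genuinely nonlinear limit passages --- normally the crux of such arguments --- have already been dispatched in Theorem~\ref{5.3} and Lemmas~\ref{5.6}--\ref{5.7}.
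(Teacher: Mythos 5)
Your proposal is correct and follows essentially the same route as the paper: establish that $\widehat M$ is a continuous square-integrable martingale with quadratic variation $\int_0^t\sum_{k=1}^N\|B_k(\widehat u(p))\|_{\mathcal H}^2\,dp$ (Lemma \ref{5.2}, Corollary \ref{cor5.5} and the final corollary of Section 5), then invoke the Martingale Representation Theorem \ref{thm:martingale_representation} to produce the new basis, Wiener process and the weak formulation. If anything, your write-up is more careful than the paper's, which omits the explicit limit passage identifying the drift terms via Theorem \ref{5.3} and the bookkeeping about $\widehat u(0)$, progressive measurability and the upgrade from a.e.\ $t$ to all $t$.
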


\begin{proof}
We will use the same procedure as Da Prato and Zabczyk in \cite{Da Prato} and Gaurav Dhariwal in \cite{ref2}. 
By using the leamma \eqref{5.2} and corollary \eqref{cor5.5}, we can say that we have  $\H$-valued continuous square integrable processes $\widehat{M}(t)$, $t \in [0, T]$, which is $(\mathscr{F}_t)-$ martingale. In addition, its quadratic variation is given as

\begin{align*}
   \left \langle \left \langle\widehat{M} \right \rangle \right \rangle _t = \int_0^t \sum_{k=1}^N \|B_k(\widehat{u}(p))\|_\H^2 dp.
\end{align*}

Thus, referring to  the Martingale Representation Theorem \eqref{thm:martingale_representation}, we can infer that we have 
\begin{enumerate}
\item A stochastic basis $(\widehat{\widehat\Omega}, \widehat{\mathscr{\widehat F}}, (\widehat{\mathscr{ \widehat F}}_t), \widehat{\mathbb{ \widehat P}})$
\item An $\mathbb{R}^N$-valued $\widehat{\mathscr{ \widehat F}}$-Wiener process $\widehat{ \widehat W}(t)$
\item A progressively measurable process $\widehat{\widehat u}$ such that for all $t \in [0, T]$ and $\omega \in V$ satisfies:
\begin{align*}
\langle \widehat{ \widehat u}(t), \omega \rangle - \langle \widehat{ \widehat u}(0), \omega \rangle = \int_0^t \left\langle -\Delta^{2}\widehat{ \widehat u}(p)+2 \Delta \widehat{\widehat u}(p)+F(\widehat{\widehat u}(p)) + \frac{1}{2}\sum_{k=1}^{N} m_{k}(\widehat{\widehat u}(p)), \omega\right\rangle dp + \sum_{k=1}^N \int_0^t \langle B_k(\widehat{\widehat u}), \omega \rangle d\widehat {\widehat W}_k
\end{align*}
\end{enumerate}

Hence, we are done with our proof that there is a martingale solution to the problem \eqref{thm:martingale_representation}
\end{proof}

\end{document}